\newtheorem{Def}{Definition}[section]
\newtheorem{Thm}[Def]{Theorem}
\newtheorem{Lem}[Def]{Lemma}
\newtheorem{Assumption}[Def]{Assumption}
\newtheorem{Rem}[Def]{Remark}
\newtheorem{Cor}[Def]{Corollary}
\newcommand{\argmin}{\operatornamewithlimits {argmin}}
\newcommand{\mcf}{\mathcal{F}}
\newcommand{\mbbr}{\mathbb{R}}
\newcommand{\al}{\alpha}
\newcommand{\del}{\delta}
\newcommand{\ep}{\epsilon} 
\newcommand{\vp}{\varphi}
\newcommand{\sig}{\sigma}
\newcommand{\D}{\Delta}
\newcommand{\Sig}{\Sigma}
\newcommand{\gam}{\gamma}
\newcommand{\Gam}{\Gamma}
\newcommand{\p}{\partial}
\newcommand{\cip}{\xrightarrow{p}} 
\def\ds#1{\displaystyle{#1}}
\def\nn{\nonumber}
\def\sumj{\sum_{j=1}^{n}}
\title[Two-step estimation of ergodic L\'{e}vy driven SDE]{
Two-step estimation of ergodic L\'{e}vy driven SDE
}
\date{\today}
\keywords{Asymptotic normality, ergodicity, functional parameter estimation, Gaussian quasi-likelihood estimation, 
high-frequency sampling, L\'evy driven stochastic differential equation.}
\author{Hiroki Masuda}
\address[Hiroki Masuda]{Faculty of Mathematics, Kyushu University. 744 Motooka Nishi-ku Fukuoka 819-0395, Japan}
\email{hiroki@math.kyushu-u.ac.jp}
\author{Yuma Uehara}
\address[Yuma Uehara, corresponding author]{Graduate School of Mathematics, Kyushu University. 
744 Motooka Nishi-ku Fukuoka 819-0395, Japan}
\email{ma214003@math.kyushu-u.ac.jp}
\begin{document}

\maketitle

\begin{abstract}
We consider high frequency samples from ergodic L\'{e}vy driven stochastic differential equation 
(SDE) with drift coefficient $a(x,\alpha)$ and scale coefficient $c(x,\gamma)$ involving unknown parameters $\alpha$ and $\gamma$. 
We suppose that the L\'{e}vy measure $\nu_{0}$, has all order moments but is not fully specified. 
We will prove the joint asymptotic normality of some estimators of $\alpha$, $\gamma$ 
and a class of functional parameter $\int\varphi(z)\nu_0(dz)$, which are 
constructed in a two-step manner: first, we use the Gaussian quasi-likelihood for estimation of $(\al,\gam)$; 
and then, for estimating $\int\varphi(z)\nu_0(dz)$ we make use of the method of moments 
based on the Euler-type residual with the the previously obtained quasi-likelihood estimator.
\end{abstract}

\section{Introduction}

It is widely recognized that a diffusion model is a typical candidate model to describe the high activity time-varying dynamics.
However, especially in the biological, technological and financial application, there do exist many phenomena where driving noise process 
exhibits highly non-Gaussian behavior. A jump-type L\'{e}vy process may serve as a suitable building block in modeling such phenomena. 
In this paper, we consider a high frequency data $(X_{t_0},X_{t_1},\dots,X_{t_n})$ from the one-dimensional 
L\'{e}vy driven stochastic differential equation (SDE):
\begin{equation}\label{Model}
dX_t= a(X_t,\alpha)dt+ c(X_{t-}, \gamma)dJ_t,\quad X_0=x_0,
\end{equation}
where:
\begin{itemize}
\item $\alpha=(\al_{l})$ and $\gamma=(\gam_{l})$ 
are unknown finite dimensional parameters and we suppose that each of them are elements of bounded convex domains $\Theta_\alpha\subset\mathbb{R}^{p_\alpha},\Theta_\gamma\subset\mathbb{R}^{p_\gamma}$ and we write $\Theta=\Theta_\alpha\times\Theta_\gamma$ and $p_\alpha+p_\gamma=p$.
\item 
The functional forms of the drift coefficient $a:\mathbb{R}\times\Theta_\alpha\to\mathbb{R}$ and the scale coefficient $c:\mathbb{R}\times\Theta_\gamma\to\mathbb{R}$ are known.
\item $J_t$ is a one-dimensional pure jump L\'{e}vy process with L\'{e}vy measure $\nu_0$. 
\end{itemize}
We denote by 
$P_0$ the true image measure of $X$ associated with the true value $\theta_0\in\Theta$. 
Note that we do not consider the case of misspecification of the functional form of the coefficients. 
We suppose that the path of $X_t$ is not observed continuously but observed discretely at high frequency: we consider the samples $(X_{t_0},X_{t_1},\dots,X_{t_n})$, where $t_j=t_j^n=jh_n$ for some $h_n>0$ which satisfies that
\begin{equation}\nn
nh_n^2\to0 \quad  \mbox{and}   \quad nh_n^{1+\epsilon_0}\to\infty,
\end{equation}
for $n\to\infty$ and some $\epsilon_0\in(0,1)$. 
The objective of this paper is to estimate $\theta_0$ and the functional parameter $\int\varphi(z)\nu_0(dz)$ for some function $\varphi$ 
in a two-step manner. 
It is not essential in our results that $X$ has no Wiener part, 
but the absence is assumed from the very beginning just for simplicity of the statements; 
see Remark \ref{hm:rem_wiener.part} for a brief discussion.

Up to the present, many results about the estimation of the diffusion process (this process corresponds to the case of replacing $J_t$ with a standard Wiener process in \eqref{Model}) have been established both continuous sampling case and discrete sampling case.
In the continuous sampling case, the explicit form of its likelihood is given (see, for example, \cite{Liptser2001}).
Hence we can construct the maximum likelihood estimator of $\alpha$ and under some conditions, it has consistency and asymptotic normality (for details, see \cite{Kutoyants2004} and \cite{Rao1999}).
In the discrete sampling case, we can not obtain the closed form of its likelihood in general, so that we have to consider another method. 
Typically, we resort to the quasi-likelihood based on the local Gaussian approximation. 
By the It\^{o}-Taylor expansion, \cite{Kessler1997} gives the estimation scheme in the case of $nh_n\to\infty$ and $nh_n^q\to0$ ($\forall q \geq 2$). 
\cite{Gobet2002} shows its local asymptotic normality; he also shows the local asymptotic normality in the non-ergodic case. 
Needless to say, there are many estimation methods besides (quasi) maximum likelihood method (see, for example, \cite{Kutoyants2004} and \cite{Rao1999}).
We emphasize that these estimation methods essentially rely on the scaling and finite-moment properties of Wiener process. 

Construction of an estimator of $\int\varphi(z)\nu_0(dz)$ is important in the statistical inference associated with L\'{e}vy process.
Recall that the class of bounded continuous functions vanishing in a neighborhood of the origin completely characterizes $\nu_{0}$ \cite[Theorem 8.7]{Sato1999}.
In particular, the parameter $\int\varphi(z)\nu_0(dz)$ corresponds to the $q$th cumulant of $J_1$ for $\varphi(z)=z^{q}$ with $q>2$, 
and also to the cumulant transform of $J_{1}$ for $\varphi(z)=e^{iuz}-1-iuz$, $u\in\mbbr$, 
which is important to assessing the ruin probability in a jump-type L\'{e}vy risk model. 
The example of moment-fitting estimation of $\int\varphi(z)\nu_0(dz)$ from the discretely samples, $(J_{h_n},J_{2h_n},\dots,J_{nh_n})$, are proposed in \cite{Lopez2009} and \cite{Shimizu2009}. 
The main claim of \cite{Lopez2009} says that 
under some moment conditions, for a function $\varphi$ vanishing in a neighborhood of the origin it follows that
\begin{equation}\label{ETEL}
\sqrt{nh_n}\left(\frac{1}{nh_n}\sum_{j=1}^n\varphi(\Delta_j J)-\int\varphi(z)\nu_0(dz)\right)
\overset{\mathcal{L}}\longrightarrow
\mathcal{N}\left(0,\int\varphi(z)^2\nu_0(dz)\right),
\end{equation} 
where $\Delta_j J=J_{jh_n}-J_{(j-1)h_n}$.
However, in the estimation of L\'{e}vy driven SDE, we encounter the difficulty, that is, $(J_{h_n},J_{2h_n},\dots,J_{nh_n})$ cannot be observed directly. 
One may think of utilizing a martingale estimating function for joint estimation of $\theta_0$ and $\int\varphi(z)\nu_0(dz)$. 
However, we then have to specify what kind of conditional expectation is to be used in an explicit way, 
which inevitably requires more specific structural assumptions about $\nu_{0}(dz)$ beyond Assumption \ref{Moments}. 

Here we will take another route. 
Previously, \cite{Masuda2013} used the Gaussian quasi-likelihood, which can apply to a large class of L\'{e}vy processes, making it possible to construct Gaussian quasi maximum likelihood estimators (GQMLE) $\hat{\theta}_n=(\hat{\alpha}_n,\hat{\gamma}_n)$ of the true value 
$\theta_0=(\alpha_0,\gamma_0)$ without any specific information about the noise distribution; also, 
\cite[Theorem 2.7]{Masuda2013} shows that it has consistency and asymptotic normality with rate $\sqrt{nh_n}$. 
By making use of the GQMLE and the functional-parameter moment fitting, we will propose a two-step procedure 
for joint estimation of $\theta$ and $\int\varphi(z)\nu_0(dz)$: 
we first estimate $\alpha$ and $\gamma$ by GQMLE, and next construct the estimator of $\int\varphi(z)\nu_0(dz)$ based on Euler-Maruyama approximation. 
We still do not presume the closed form of the noise distribution, 
so that our way of estimation is beneficial in terms of the robustness against noise misspecification. 
Further the proposed two-step procedure enables us to bypass simultaneous optimization problem, which may result in high computational load. 


The organization of this paper is as follows. 
In Section \ref{Notations and Assumptions}, we will introduce notations and assumptions for our main results. 
Section \ref{Main results} provides our main results: the stochastic expansion
\begin{align*}
&\sqrt{nh_n}\left(\frac{1}{nh_n}\sum_{j=1}^n\varphi\left(\frac{X_{jh_n}-X_{(j-1)h_n}-a(X_{(j-1)h_n},\hat{\alpha}_n)}{c(X_{(j-1)h_n},\hat{\gamma}_n)}\right)-\int\varphi(z)\nu_0(dz)\right)\\
&=\sqrt{nh_n}\left(\frac{1}{nh_n}\sum_{j=1}^n\varphi(\Delta_j J)-\int\varphi(z)\nu_0(dz)\right)
+\hat{b}_{n}\sqrt{nh_n}(\hat{\gamma}_n-\gamma_0)+o_p(1),
\end{align*}
and the asymptotic normality of our estimators; 
see \eqref{SE_b} for the explicit form of $\hat{b}_{n}$. 
In particular, the second term of the right-hand side reflects the effect of plugging-in the $\sqrt{nh_{n}}$-consistent 
estimator $\hat{\gam}_{n}$ into the scale components of the Euler-residual sequence.
All the proofs of our main results are presented in Section \ref{Appendix}.


\section{Notations and Assumptions}\label{Notations and Assumptions}

\subsection{Notations}
We denote by $(\Omega,\mathcal{F}, (\mathcal{F}_t )_{t\in\mathbb{R}_+} , \mathbb{P} )$ a complete filtered probability space on which the process X is defined, the initial variable $X_0$ being $\mathcal{F}_0$-measurable and $J_t$ being $\mathcal{F}_t$-adapted and independent of $X_0$.

For abbreviation, we introduce some notations.
\begin{itemize}
\item $E_0[\cdot]$ denotes the expectation operator with respect to $P_0$ and we abbreviate $\int \varphi(z) \nu_0(dz)$ to $\nu_0(\varphi)$.
\item For differentiable function $f$, $\partial_x f$ stands for the derivative with respect to any variable $x$ and $\partial f$ represents the vector of the derivatives of the components of $f$.
\item $t_j:=jh_n$.
\item $E^{j-1}[\cdot]$ stands for the conditional expectation with respect to $\mathcal{F}_{t_{j-1}}$. 
\item $\Delta_j Z$ stands for $Z_{t_j}-Z_{t_{j-1}}$ for any process $Z$.
\item $ \sum_j:=\sum_{j=1}^n$ and $\int_j:=\int_{t_{j-1}}^{t_j}$.
\item $\eta(x,\theta):=a(x,\alpha)c^{-1}(x,\gamma)$ and $M(x,\theta):=\partial_\alpha a(x,\alpha)c^{-2}(x,\gamma)$.
\item 
$f_s:=f(X_s,\theta_0)$ for any function $f$ on $\mathbb{R}\times\Theta$; 
e.g. $a_{t}(\al)=a(X_{t},\al)$ and $M_{t}(\theta)=M(X_{t},\theta)$.
\item We will write $x_n\lesssim y_n$ when there exists a positive constant $C$ such that $x_n\leq Cy_n$ for large enough $n$; $C$ does not depend on $n$ and varies line to line.
\end{itemize}

We define the random functions $G_n^\alpha(\theta)\in\mathbb{R}^{p_\alpha}$ and $G_n^\gamma(\theta)\in\mathbb{R}^{p_\gamma}$ by 
\begin{align*}
G_n^\alpha(\theta)&=\frac{1}{nh_n}\sum_jM_{t_{j-1}}(\theta) (\Delta_jX -h_n a_{t_{j-1}}(\alpha)),\\
G_n^\gamma(\theta)&=\frac{1}{nh_n}\sum_j\left\{\left[-\partial_\gamma c_{t_{j-1}}^{-2}(\gamma)\right](\Delta_jX-h_n a_{t_{j-1}}(\alpha))^2-h_n \frac{\partial_\gamma c_{t_{j-1}}^2(\gamma)}{c_{t_{j-1}}^2(\gamma)}\right\},
\end{align*} 
and the corresponding GQMLE (\cite{Masuda2013}) by
\begin{equation}\nn
\displaystyle \hat{\theta}_n := \argmin_{\theta\in\bar{\Theta}} |(G_n^\alpha(\theta),G_n^\gamma(\theta))|,
\end{equation}
where $\bar{\Theta}$ denotes the closure of $\Theta$ and $|\cdot|$ the Euclidean norm. 

We introduce additional notations associated with GQMLE.
\begin{itemize}
\item $\hat{f}_s:=f(X_s,\hat{\theta}_n)$ for any function $f$ on $\mathbb{R}\times\Theta$; 
for notational brevity, we also use the notation $\partial_{\theta}\hat{f}_{j-1}$ instead of $\widehat{\partial_{\theta}f_{j-1}}$.
\item $\delta_j :=c_{t_{j-1}}^{-1}(\Delta_jX-h_na_{t_{j-1}})$ and $\hat{\delta}_j :=\hat{c}_{t_{j-1}}^{-1}(\Delta_jX-h_n\hat{a}_{t_{j-1}})$.
\item $\hat{v}_n:=\sqrt{nh_n}(\hat{\theta}_n-\theta_0)$ and $\hat{w}_n:=\sqrt{nh_n}(\hat{\gamma}_n-\gamma_0)$.
\end{itemize}

\subsection{Assumptions}
For our asymptotic results, we introduce some assumptions.

\begin{Assumption}[Sampling design]\label{Sampling design}
$nh_n^2\to0$ and $nh_n^{1+\epsilon_0}\to\infty$ for $\epsilon_0 \in (0,1)$. 
\end{Assumption} 

\begin{Assumption}[Moments]\label{Moments}
We have $E[J_1]=0, E[J_1^2]=1$ and $E[|J_1|^q]<\infty$ for all $q>0$.
\end{Assumption}

Although we only assume the moment conditions on $J_1$, the first and the third formulae are valid for all $t>0$, see \cite[Theorem 25.18]{Sato1999} and we have $E[J_t^2]=t$ from the expression of characteristic function of $J_t$.
Further, by the definition of L\'{e}vy measure and the fact that $E[|J_t|^q]$ exists if and only if $\int_{|z|\geq 1} |z|^q \nu_0(dz)$ (see \cite[Theorem 25.3]{Sato1999}), we see that $\int |z|^q \nu_0(dz) < \infty$, for all $q\geq2$ under Assumption \ref{Moments}.

\begin{Assumption}[Smoothness]\label{Smoothness}
\begin{enumerate}
\item The drift coefficient $a(\cdot,\alpha_0)$ and the scale coefficient $c(\cdot,\gamma_0)$ are Lipschitz continuous.
\item For each $i \in \left\{0,1,2\right\}$ and $k \in \left\{0,1,\dots,5\right\}$, the following conditions hold:
\begin{itemize}
\item 
The coefficient $a(x,\al)$ and $c(x,\gam)$ have partial derivatives 
$\partial_x^i \partial_\alpha^k a(x,\al)$ and $\partial_x^i \partial_\gamma^k c(x,\gam)$, 
and all the functions 
$\al\mapsto \partial_x^i \partial_\alpha^k a(x,\al)$ and $\gam\mapsto\partial_x^i \partial_\gamma^k c(x,\gam)$ 
for each $x\in\mbbr$ (including $\al\mapsto a(x,\al)$ and $\gam\mapsto c(x,\gam)$ themselves) can be continuously extended to the boundary of $\Theta$.

\item There exists nonnegative constant $C_{(i,k)}$ satisfying
\begin{equation}\label{polynomial}
\sup_{(x,\alpha,\gamma) \in \mathbb{R} \times \Theta_\alpha \times \Theta_\gamma}\frac{1}{1+|x|^{C_{(i,k)}}}\left\{|\partial_x^i\partial_\alpha^ka(x,\alpha)|+|\partial_x^i\partial_\gamma^kc(x,\gamma)|+|c^{-1}(x,\gamma)|\right\}<\infty.
\end{equation}
\end{itemize}
\end{enumerate}
\end{Assumption}

In this paper we will assume that $X$ is exponentially ergodic together with the boundedness of moments of any order. 
Let $P_t$ denote the transition probability of $X$.
Given a function $\rho:\mathbb{R}\to\mathbb{R}^+$ and a signed measure $m$ on one-dimensional Borel space, we define
\begin{equation}\nn
||m||_\rho=\sup\left\{|m(f)|:\mbox{$f$ is $\mathbb{R}$-valued, $m$-measurable and satisfies $|f|\leq\rho$}\right\}.
\end{equation}

\begin{Assumption}[Stability]\label{Stability}
\begin{enumerate}
\item
There exists a probability measure $\pi_0$ such that for every $q>0$ we can find positive constants $a$ and $c$ for which 
\begin{equation}\label{Ergodicity}
\sup_{t\in\mathbb{R}_{+}} e^{at} ||P_t(x,\cdot)-\pi_0(\cdot)||_g \le cg(x), \quad x\in\mathbb{R},
\end{equation}
where $g(x):=1+|x|^q$.
\item 
For all $q>0$, we have
\begin{equation}\nn
\sup_{t\in\mathbb{R}_{+}}E_0[|X_t|^q]<\infty. 
\end{equation}
\end{enumerate}
\end{Assumption}

The condition \eqref{Ergodicity} corresponds to the exponential ergodicity when $g$ is replaced by $1$.
When some boundedness conditions about coefficients and their derivatives are assumed, moment conditions written in above can be weakened (see \cite[Section 5]{Masuda2013} for easy sufficient conditions for Assumption \ref{Stability}).

Let $G_\infty(\theta):=(G_\infty^\alpha(\theta), G_\infty^\gamma(\gamma))\in\mathbb{R}^p$ define by 
\begin{align*}
G_\infty^\alpha(\theta)=\int\frac{\partial_\alpha a(x,\alpha)}{c^2(x,\gamma)}(a(x,\alpha_0)-a(x,\alpha))\pi_0(dx),\\
G_\infty^\gamma(\theta)=2\int\frac{\partial_\gamma c(x,\gamma)}{c^3(x,\gamma)}(c^2(x,\gamma_0)-c^2(x,\gamma))\pi_0(dx).
\end{align*}
We need to impose some conditions on $G_\infty(\theta)$ for the consistency of $\alpha$ and $\gamma$.
The sufficient condition for the consistency of general M(or Z)-estimator is given in \cite{VanderVaart2000}.

\begin{Assumption}[Identifiability]\label{Identifiability}
There exist nonnegative constants $\chi_\alpha$ and $\chi_\gamma$ such that
\begin{equation}\nn
|G_\infty^\alpha(\theta)| \geq \chi_\alpha|\alpha-\alpha_0|,\quad |G_\infty^\gamma(\theta)| \geq \chi_\gamma|\gamma-\gamma_0| \quad \mbox{for all} \ \theta.
\end{equation}
\end{Assumption}

Define $\mathcal{I}(\theta_0):=$diag$\left\{\mathcal{I}^\alpha(\theta_0),\mathcal{I}^\gamma(\theta_0)\right\}\in\mathbb{R}_{p}\otimes\mathbb{R}_{p}$ by
\begin{align*}
&\mathcal{I}^\alpha(\theta_0)=\int\frac{(\partial_\alpha a(x,\alpha_0))^{\otimes2}}{c^2(x,\gamma_0)} \pi_0(dx),\\
&\mathcal{I}^\gamma(\theta_0)=4\int\frac{(\partial_\gamma c(x,\gamma_0))^{\otimes 2}}{c^2(x,\gamma_0)} \pi_0(dx),
\end{align*}
where $x^{\otimes2}:=xx^T$ for any vector or matrix $x$ and $T$ means the transpose.
The matrix $\mathcal{I}(\theta_0)$ plays a role like a Fisher-information like quantity in GQML estimation.

\begin{Assumption}[Nondegeneracy]\label{Nondegeneracy}
$\mathcal{I}^\alpha(\theta_0)$ and $\mathcal{I}^\gamma(\theta_0)$ are invertible.
\end{Assumption}

Our estimation of $\nu_0(\varphi)$ will be based on \eqref{ETEL}.
Here we only think of Euclidean space valued $\vp$, while treatment of complex $\vp$ being completely analogous. 
In our setting, we only observe high frequency sample $(X_h,X_{2h_n},\dots,X_{nh_n})$, hence we need to approximate $\Delta_jJ$ to estimate $\nu_0(\varphi)$.
Let $\mathcal{A}$ denote the formal infinitesimal generator with respect to L\'{e}vy process $J$, that is,
\begin{equation}\label{IG}
\mathcal{A}\varphi(x)=\int (\varphi(x+z)-\varphi(x)-\partial\varphi(x)z) \nu_0(dz),
\end{equation} 
for any $\varphi$ such that the integral exists. In what follows we fix a positive integer $q$. 
We now define a positive constant $\rho$ fulfilling that
\begin{equation*}
\rho>(1-\epsilon_0)\vee\beta,
\end{equation*}
where $\epsilon_0$ is the same as in Assumption \ref{Sampling design} and $\beta$ denotes the Blumenthal-Getoor index of $J$ defined by
\begin{equation*} 
\beta=\inf\left\{\gamma\geq0;~\int_{|z|\le 1}|z|^\gamma\nu_0(dz)\right\}.
\end{equation*}
Denote by $\mathcal{K}$ the set of all $\mbbr^{q}$-valued functions on $\mbbr$ 
such that its element $f=(f_{k})_{k=1}^{q}:~\mbbr\to\mbbr^{q}$ satisfies the following conditions:
\begin{enumerate}
\item $f$ is five times differentiable.
\item There exist nonnegative constants $C_i$ ($0\le i\le 5$) such that
\begin{align*}
&\limsup_{z\to0}\left\{\frac{1}{|z|^\rho}|f(z)|+\frac{1}{|z|}|\partial f(z)|\right\}<\infty, \nn\\
& \limsup_{z\to\infty}\left\{\frac{1}{1+|z|^{C_0}}|f(z)|+\frac{1}{|z|^{1+C_1}}|\partial f(z)|\right\}<\infty,\\
&\sup_{z\in\mathbb{R}}\frac{1}{1+|z|^{C_i}}|\partial^i f(z)|<\infty,\quad i\in\left\{2,3,4,5\right\}.
\end{align*}
\end{enumerate}



We now impose
\begin{Assumption}[Moment-fitting function]\label{Moment fitting function} $\varphi\in\mathcal{K}$.
\end{Assumption}
Then, according to the definition of Blumenthal-Getoor index and Assumption \ref{Moments} we have $\nu_0(\varphi)<\infty$.



\section{Main results}\label{Main results}
The Euler-Maruyama approximation says that
\begin{equation}\nn
X_{t_j} \approx X_{t_{j-1}}+h_n a_{t_{j-1}}+c_{t_{j-1}}\Delta_jJ.
\end{equation}
This suggests that we may formally regard $\delta_j$ as the estimator of $\Delta_jJ$, and indeed it will turn out to be true under our assumptions.
Also, we will see that the Euler residual  $\hat{\delta}_j$, which is constructed only by $(X_{h_n},X_{2h_n},\dots,X_{nh_n})$, may also serve as an estimator of $\Delta_jJ$ (see the proof of Theorem \ref{Bias correction}).

Let
\begin{align*}
& u_n:=\sqrt{nh_n}\bigg(\frac{1}{nh_n}\sum_{j} \varphi(\Delta_jJ) -\nu_0(\varphi)\bigg),
\\
&\hat{u}_n:=\sqrt{nh_n}\bigg(\frac{1}{nh_n}\sum_{j}\varphi(\hat{\delta}_j)-\nu_0(\varphi)\bigg).
\end{align*}
As was mentioned in the introduction, we know that $u_{n}$ is asymptotically normally distributed: 
$u_{n}\overset{\mathcal{L}} \longrightarrow \mathcal{N}(0,\nu_{0}(\vp^{\otimes 2}))$. 
Let
\begin{equation}
\zeta(z) := z\p\vp(z).
\nonumber
\end{equation}

The next theorem clarifies the effect of using the statistics $\hat{\delta}_j$ instead of the unobservable variables $\Delta_j J$.

\begin{Thm}\label{Bias correction}Under Assumptions \ref{Sampling design}-\ref{Moment fitting function}, we have
\begin{equation}\label{SE}
\hat{u}_n=u_n+\hat{b}_n[\hat{w}_n]+o_p(1),
\end{equation}
where $\hat{b}_n \in \mathbb{R}^q\otimes\mathbb{R}^{p_\gamma}$ is defined by
\begin{equation}\label{SE_b}
\hat{b}_n = -\bigg(\frac{1}{nh_{n}}\sum_j\zeta(\hat{\delta}_j)\bigg)\otimes
\bigg(\frac{1}{n}\sum_j\frac{\p_{\gam}\hat{c}_{t_{j-1}}}{\hat{c}_{t_{j-1}}}\bigg).
\end{equation}
\end{Thm}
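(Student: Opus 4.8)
The plan is to control $\hat u_n-u_n=(nh_n)^{-1/2}\sum_j\{\vp(\hat\delta_j)-\vp(\Delta_j J)\}$ by Taylor-expanding $\vp$ after decomposing $\hat\delta_j-\Delta_j J=r_j+s_j$ into the Euler discretization error at the true parameter,
\[
r_j:=\delta_j-\Delta_j J=c_{t_{j-1}}^{-1}\Bigl(\int_j(a_s-a_{t_{j-1}})\,ds+\int_j(c_{s-}-c_{t_{j-1}})\,dJ_s\Bigr)
\]
(from $\Delta_j X=\int_j a_s\,ds+\int_j c_{s-}\,dJ_s$), plus the plug-in error $s_j:=\hat\delta_j-\delta_j$. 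A first order Taylor expansion of $\hat{a}_{t_{j-1}}$ and $\hat{c}_{t_{j-1}}$ in $\theta$ — legitimate by Assumption~\ref{Smoothness} and the $\sqrt{nh_n}$-consistency of $\hat{\theta}_n$ from \cite[Theorem 2.7]{Masuda2013} — gives, with $\hat{\gamma}_n-\gamma_0=(nh_n)^{-1/2}\hat{w}_n$,
\[
s_j=-\frac{\p_\gam c_{t_{j-1}}[\hat{w}_n]}{\sqrt{nh_n}\;c_{t_{j-1}}}\,\delta_j\;-\;h_n\,\frac{\p_\al a_{t_{j-1}}[\hat{\alpha}_n-\alpha_0]}{c_{t_{j-1}}}\;+\;R_j,\qquad |R_j|\lesssim (nh_n)^{-1}(1+|X_{t_{j-1}}|)^{C}.
\]
Expanding $\vp$ first about $\delta_j$ and then $\delta_j$ about $\Delta_jJ$ (valid since $\vp\in\mathcal K$ is $C^5$ with polynomially bounded derivatives) writes $\hat u_n-u_n$ as the sum of the \emph{discretization term} $(nh_n)^{-1/2}\sum_j\p\vp(\Delta_j J)r_j$, the \emph{plug-in term} $(nh_n)^{-1/2}\sum_j\p\vp(\delta_j)s_j$, and a remainder $\tfrac12(nh_n)^{-1/2}\sum_j\{\p^2\vp(\cdot)r_j^2+\p^2\vp(\cdot)s_j^2\}$ evaluated at mean-value points.

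Two of the three pieces are routine. From It\^o's isometry ($\la J\ra_t=t$ under Assumption~\ref{Moments}) and the short-time bound $E^{j-1}\sup_{s\in[t_{j-1},t_j]}|X_s-X_{t_{j-1}}|^2\lesssim h_n$ (available under Assumption~\ref{Stability}) one has $E^{j-1}|r_j|^2\lesssim h_n^2$; combined with $|s_j|^2\lesssim (nh_n)^{-1}(\delta_j^2+h_n^2)(1+|X_{t_{j-1}}|)^C$, the polynomial growth of $\p^2\vp$, the bound $\sum_j\delta_j^2=O_p(nh_n)$, and $nh_n^2\to0$ (Assumption~\ref{Sampling design}), the quadratic remainder is $o_p(1)$; and the drift part of the discretization term is $o_p(1)$ (of order $O_p(\sqrt{nh_n^2})$) by bounding the corresponding sum of conditional first moments.

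\emph{The main obstacle} is the stochastic part of the discretization term,
\[
T_n:=\frac1{\sqrt{nh_n}}\sum_j\p\vp(\Delta_j J)\,c_{t_{j-1}}^{-1}\int_j(c_{s-}-c_{t_{j-1}})\,dJ_s,
\]
which must be shown to be $o_p(1)$: a term-by-term absolute bound diverges, so genuine cancellation has to be extracted. The plan is to replace $c_{s-}-c_{t_{j-1}}$ by its leading increment $\p_x c_{t_{j-1}}\,c_{t_{j-1}}(J_{s-}-J_{t_{j-1}})$, so that $\int_j(c_{s-}-c_{t_{j-1}})\,dJ_s$ equals, up to negligible terms, $\tfrac12\,\p_x c_{t_{j-1}}\,c_{t_{j-1}}\{(\Delta_j J)^2-\Delta_j[J]\}$ with $[J]$ the quadratic variation of $J$, and then to split each summand of $T_n$ into its $\mathcal F_{t_{j-1}}$-conditional expectation plus a martingale difference. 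The conditional expectation is a product of an $\mathcal F_{t_{j-1}}$-measurable polynomially bounded factor and the deterministic quantity $E[\p\vp(J_{h_n})\{J_{h_n}^2-[J]_{h_n}\}]$, which is $o(h_n)$ because the leading contributions cancel: $\p\vp(0)=0$ and $E[J_{h_n}^3]=E[J_{h_n}[J]_{h_n}]=h_n\nu_0(z^3)$ (both equal $h_n$ times the third cumulant of $J_1$, since $\cov(J_{h_n},[J]_{h_n})=h_n\nu_0(z^3)$), so the compensator sum is $O_p(\sqrt{nh_n^2})=o_p(1)$. For the martingale-difference part the naive $L^2$ bound again diverges, and here one must use the vanishing of $\vp$ near the origin: decomposing $J$ into its small- and large-jump parts at an $h_n$-dependent threshold, $\Delta_j J$ is small on the overwhelmingly likely event of no large jump in $[t_{j-1},t_j]$, so $|\p\vp(\Delta_j J)|$ gains enough extra smallness — the exponents being balanced precisely by $\rho>(1-\epsilon_0)\vee\beta$ together with $nh_n^{1+\epsilon_0}\to\infty$ — to make the conditional second moments summable. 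It is convenient to isolate this truncation argument (together with the parallel facts $(nh_n)^{-1/2}\sum_j\{\vp(\delta_j)-\vp(\Delta_j J)\}=o_p(1)$ and $(nh_n)^{-1}\sum_j\{f(\delta_j)-f(\Delta_j J)\}=o_p(1)$ for $f$ of the type in $\mathcal K$, i.e. that $\delta_j$ is itself a legitimate surrogate for $\Delta_j J$) as a separate lemma, following the techniques of \cite{Shimizu2009} and \cite{Masuda2013}.

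It remains to identify the plug-in term. The $\al$-component of $s_j$ carries an extra factor $h_n$ and contributes $o_p(1)$ after summation, while the $\gam$-component gives, using $\zeta(z)=z\p\vp(z)$,
\[
\frac1{\sqrt{nh_n}}\sum_j\p\vp(\delta_j)\,s_j=-\Bigl(\frac1{nh_n}\sum_j\zeta(\delta_j)\otimes\frac{\p_\gam c_{t_{j-1}}}{c_{t_{j-1}}}\Bigr)[\hat{w}_n]+o_p(1).
\]
Since $\hat{w}_n=O_p(1)$, it suffices to show that this matrix average converges in probability to $\nu_0(\zeta)\otimes\int c^{-1}(x,\gamma_0)\p_\gam c(x,\gamma_0)\,\pi_0(dx)$ — the product of the two separate limits — because the explicit $\hat{b}_n$ of \eqref{SE_b} has the same probability limit (replace $\zeta(\hat{\delta}_j)$ by $\zeta(\Delta_j J)$ via the surrogate lemma, use $(nh_n)^{-1}\sum_j\zeta(\Delta_j J)\cip\nu_0(\zeta)$, the ergodic theorem and polynomial growth for $n^{-1}\sum_j\p_\gam\hat{c}_{t_{j-1}}/\hat{c}_{t_{j-1}}$, and the consistency of $\hat{\theta}_n$), whence $\bigl(\hat{b}_n-(\text{matrix average})\bigr)[\hat{w}_n]=o_p(1)$. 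For the convergence of the matrix average, replace $\zeta(\delta_j)$ by $\zeta(\Delta_j J)$ and split $\zeta(\Delta_j J)=E^{j-1}\zeta(\Delta_j J)+\{\zeta(\Delta_j J)-E^{j-1}\zeta(\Delta_j J)\}$: the conditional mean $E^{j-1}\zeta(\Delta_j J)=E[\zeta(J_{h_n})]$ is deterministic and equals $h_n\nu_0(\zeta)+o(h_n)$ (because $\zeta(0)=0$ and $\p\zeta(0)=\p\vp(0)=0$, so $\mathcal{A}\zeta(0)=\nu_0(\zeta)$ in the sense of \eqref{IG}), so that part equals $(\nu_0(\zeta)+o(1))\,n^{-1}\sum_j\p_\gam c_{t_{j-1}}/c_{t_{j-1}}$ and converges by ergodicity (Assumption~\ref{Stability}) and polynomial growth (Assumption~\ref{Smoothness}); and the martingale-difference part has squared $L^2$-norm $\lesssim (nh_n)^{-2}\sum_j E[\zeta(J_{h_n})^{\otimes2}]\lesssim (nh_n)^{-1}\nu_0(\zeta^{\otimes2})\to0$. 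Collecting the pieces yields \eqref{SE} with $\hat{b}_n$ as in \eqref{SE_b}.
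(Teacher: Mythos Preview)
Your overall architecture --- splitting $\hat u_n-u_n$ into a discretization piece $b_n^{(2)}=(nh_n)^{-1/2}\sum_j\{\vp(\delta_j)-\vp(\Delta_jJ)\}$, a plug-in piece $b_n^{(1)}=(nh_n)^{-1/2}\sum_j\{\vp(\hat\delta_j)-\vp(\delta_j)\}$, and a second-order remainder --- is exactly the paper's (this is Lemma~\ref{BCL}), and your treatment of the plug-in piece and of the final identification of $\hat b_n$ via $\nu_0(\zeta)\otimes\int c^{-1}\partial_\gamma c\,\pi_0(dx)$ is essentially the paper's argument.

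The difference is in the discretization piece, and there your assessment is wrong. You claim that for the stochastic part $T_n$ ``a term-by-term absolute bound diverges'', and then embark on a delicate program: It\^o-expand $\int_j(c_{s-}-c_{t_{j-1}})\,dJ_s$, compensate, and run a small/large-jump truncation ``balanced precisely by $\rho>(1-\epsilon_0)\vee\beta$''. But the direct bound does \emph{not} diverge. The key condition you are overlooking is the second line in the definition of $\mathcal K$: $\limsup_{z\to0}|z|^{-1}|\partial\vp(z)|<\infty$. Together with the polynomial growth at infinity this gives $|\partial\vp(z)|^2\lesssim|z|^2+|z|^{2(1+C_1)}$ globally, hence $E[|\partial\vp(\Delta_jJ+u\,r_j)|^2]=O(h_n)$ uniformly in $u\in[0,1]$ (since $E[|\Delta_jJ|^q]=O(h_n)$ by Lemma~\ref{MC} and $E[|r_j|^q]=O(h_n^2)$ for $q\ge2$). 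A single Cauchy--Schwarz then gives
\[
E\bigl[|b_n^{(2)}|\bigr]\;\le\;\frac1{\sqrt{nh_n}}\sum_j\sqrt{E\bigl[|\partial\vp(\cdot)|^2\bigr]}\,\sqrt{E\bigl[|r_j|^2\bigr]}\;\lesssim\;\frac{n\,\sqrt{h_n}\cdot h_n}{\sqrt{nh_n}}=\sqrt{nh_n^2}=o(1),
\]
with no need to separate drift from martingale part, no compensation, and no truncation. This is precisely how the paper disposes of $b_n^{(2)}$ (it packages the moment estimate $E[|\partial\vp(\cdot)|^2]=O(h_n)$ into the function class $\mathcal K_2$ and verifies $\vp\in\mathcal K_2$).

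Two further remarks on your proposed detour. First, the exponent $\rho$ governs $|\vp(z)|$ near $0$, not $|\partial\vp(z)|$; its role in the paper is only to ensure $\nu_0(|\vp|)<\infty$ and the like, and it does not enter the proof of Theorem~\ref{Bias correction} at all. Second, your compensator step requires controlling $E[\partial\vp(J_{h_n})\{J_{h_n}^2-[J]_{h_n}\}]$, but since $[J]_{h_n}$ is not a function of $J_{h_n}$ this is not covered by the It\^o--Taylor device of Lemma~\ref{Rate of gap}; making your sketch rigorous would take real work that the direct argument renders unnecessary.
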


Building on the stochastic expansion \eqref{SE}, we will see 
that substituting $\hat{\del}_{j}$ into $\D_{j}J$ leads to the different asymptotic covariance matrix of 
the estimator of $\nu_0(\varphi)$. See the comments after Corollary \ref{JAN2} for more details.

\begin{Rem}
Although GQMLE is adopted as the estimator of $\theta_0$, \eqref{SE} is valid for any estimator $\hat{\theta}_n$ which satisfies $E[|\sqrt{nh_n}(\hat{\theta}_n-\theta_0)|^q]<\infty$ for all $q>0$ (cf. the proof of Theorem \ref{Bias correction}).
\end{Rem}

\medskip

We define the estimating function $G_n(\theta)$ for $(\theta, \nu_{0}(\varphi))$ by
\begin{equation}\nn
G_n(\theta)=\bigg( \frac{1}{\sqrt{nh_{n}}}u_n,\, G_n^\alpha(\theta),\, G_n^\gamma(\theta)\bigg),
\end{equation}
where $G_n^\alpha(\theta)$ and $G_n^\gamma(\theta)$ are defined in the previous section. 
Introduce 
\begin{equation}\nn
\Sigma =\begin{pmatrix}
\Sigma_{11}&\Sigma_{12}\\
\Sigma_{12}^T&\Sigma_{22}
\end{pmatrix}
\end{equation}
with $\Sigma_{11}\in\mathbb{R}^q\otimes\mathbb{R}^q$, 
$\Sigma_{12}=(\Sigma_{12,kl})_{k,l}\in\mathbb{R}^q\otimes\mathbb{R}^p$ 
and $\Sigma_{22}=(\Sigma_{22,kl})_{k,l}\in\mathbb{R}^p\otimes\mathbb{R}^p$, 
where
\begin{align*}
\Sigma_{11}&=\nu_0(\varphi^{\otimes2}),\\
\Sigma_{12,kl}&=\begin{cases}
\ds{\int \varphi_{k}(z)z \nu_0(dz) \int \frac{\partial_{\alpha_l}a(x,\alpha_0)}{c(x,\gamma_0)} \pi_0(dx)} &(1 \leq l \leq p_\alpha),\\[3mm]
\ds{2\int \varphi_{k}(z)z^2 \nu_0(dz)  \int \frac{\partial_{\gamma_l} c(x,\gamma_0)}{c(x,\gamma_0)} \pi_0(dx)} &(p_\alpha+1 \leq l \leq p), 
\end{cases}\\
\Sigma_{22,kl}&=\begin{cases}
\ds{\int\frac{\partial_{\alpha_k} a(x,\alpha_0) \partial_{\alpha_l} a(x,\alpha_0)}{c^2(x,\gamma_0)} \pi_0(dx)} 
&(k,l \in \left\{1,\dots, p_\alpha \right\}),\\[3mm]
\ds{4\int\frac{\partial_{\gamma_k} c(x,\gamma_0)\partial_{\gamma_l} c(x,\gamma_0)}{c^2(x,\gamma_0)} \pi_0(dx) \int z^4 \nu_0(dz)}
&(k,l \in \left\{p_\alpha +1,\dots,p \right\}),\\[3mm]
\ds{2\int\frac{\partial_{\alpha_k} a(x,\alpha_0)\partial_{\gamma_l} c(x,\gamma_0)}{c^2(x,\gamma_0)} \pi_0(dx) \int z^3 \nu_0(dz)}
&(k \in \left\{1,\dots, p_\alpha \right\},l \in \left\{p_\alpha +1,\dots ,p \right\}).
\end{cases}
\end{align*}

\begin{Thm}\label{JAN1}If Assumptions \ref{Sampling design}-\ref{Identifiability} and Assumption \ref{Moment fitting function} hold, and if $\Sig$ is positive definite, then
\begin{equation}\nn
\sqrt{nh_n}G_n(\theta_0) \overset{\mathcal{L}} \longrightarrow \mathcal{N}_{p+q}(0,\Sigma).
\end{equation}
\end{Thm}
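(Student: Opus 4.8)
The plan is to prove a joint CLT for the $(p+q)$-dimensional array $\sqrt{nh_n}G_n(\theta_0)$ by recognizing it, after a standard martingale-type decomposition, as a sum of a square-integrable martingale difference array plus an asymptotically negligible remainder, and then appeal to a martingale central limit theorem (e.g.\ \cite[Theorem 3.2 or 3.4]{Hall1980} or the triangular-array version in \cite{Jacod2003}). First I would write out the three blocks explicitly. The first block $u_n = \sqrt{nh_n}\big((nh_n)^{-1}\sum_j\varphi(\Delta_jJ)-\nu_0(\varphi)\big)$ is already a normalized sum of i.i.d.\ centered increments of the L\'evy functional, so $u_n/\sqrt{nh_n} = (nh_n)^{-1}\sum_j(\varphi(\Delta_jJ)-E[\varphi(\Delta_jJ)]) + \sqrt{nh_n}\,R_n$, where $R_n = (nh_n)^{-1}\sum_j E[\varphi(\Delta_jJ)] - \nu_0(\varphi) = h_n^{-1}E[\varphi(\Delta_1 J)]-\nu_0(\varphi)$; the claim $h_n^{-1}E[\varphi(\Delta_1J)]\to\nu_0(\varphi)$ with error $o((nh_n)^{-1/2})$ (hence $\sqrt{nh_n}R_n\to 0$) follows from the generator expansion $E[\varphi(\Delta_1J)] = h_n\nu_0(\varphi) + O(h_n^2)$ combined with $nh_n^2\to0$ from Assumption \ref{Sampling design}, exactly the type of estimate already used to derive \eqref{ETEL}. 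For the second and third blocks I would insert the Euler decomposition $\Delta_jX = h_na_{t_{j-1}}+c_{t_{j-1}}\Delta_jJ + (\text{residual}_j)$, so that $\Delta_jX - h_na_{t_{j-1}}(\alpha_0) = c_{t_{j-1}}\Delta_jJ + r_j$ with $E[|r_j|^q]$ controlled by the moment bounds of Assumption \ref{Moments}, \ref{Smoothness}, \ref{Stability}; substituting, $G_n^\alpha(\theta_0)$ becomes $(nh_n)^{-1}\sum_j M_{t_{j-1}}(\theta_0)c_{t_{j-1}}\Delta_jJ$ plus negligible terms, and similarly $G_n^\gamma(\theta_0)$ becomes a sum whose leading part involves $(\Delta_jJ)^2-1$ and is again a martingale-difference array after centering with $E^{j-1}$.

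The core of the argument is then the martingale CLT for the stacked triangular array
\[
\xi_{j,n} := \frac{1}{\sqrt{nh_n}}\Big(\varphi(\Delta_jJ)-E[\varphi(\Delta_jJ)],\ M_{t_{j-1}}(\theta_0)c_{t_{j-1}}\Delta_jJ,\ \psi_{t_{j-1}}(\theta_0)((\Delta_jJ)^2-1)\Big),
\]
where $\psi$ collects the $\gamma$-derivative factor from $G_n^\gamma$. The two things to check are (i) the predictable-variance convergence $\sum_j E^{j-1}[\xi_{j,n}\xi_{j,n}^T]\cip\Sigma$ and (ii) a Lyapunov/Lindeberg condition $\sum_j E[|\xi_{j,n}|^4]\to0$. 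For (i), $E^{j-1}[\xi_{j,n}\xi_{j,n}^T]$ factors into a function of $X_{t_{j-1}}$ times a moment of $\Delta_jJ$ (using $E[\Delta_jJ]=0$, $E[(\Delta_jJ)^2]=h_n$, $E[(\Delta_jJ)^3]=h_n\nu_0(z^3)+O(h_n^2)$, $E[(\Delta_jJ)^4]=h_n\nu_0(z^4)+O(h_n^2)$, $E[\varphi(\Delta_jJ)\Delta_jJ]=h_n\nu_0(\varphi\,z)+O(h_n^2)$, $E[\varphi(\Delta_jJ)^{\otimes2}]=h_n\nu_0(\varphi^{\otimes2})+O(h_n^2)$, all obtained from the generator expansion and the finiteness of all moments); after multiplying by $(nh_n)^{-1}$ and summing, the $h_n$ cancels and one is left with $n^{-1}\sum_j(\text{function of }X_{t_{j-1}})$, which converges in probability to the $\pi_0$-integral by the ergodic theorem under Assumption \ref{Stability}. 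Matching these limits term by term against the definitions of $\Sigma_{11},\Sigma_{12},\Sigma_{22}$ reproduces precisely the stated block structure (in particular the $\nu_0(z^3)$ and $\nu_0(z^4)$ entries in $\Sigma_{22}$ and the cross-entries $\int\varphi_k z\,\nu_0(dz)$, $2\int\varphi_k z^2\,\nu_0(dz)$ in $\Sigma_{12}$). Condition (ii) is immediate: each summand is $O((nh_n)^{-2})$ in $L^2$-sense after using $E[|\Delta_jJ|^q]=O(h_n)$ and the polynomial growth bounds, so the fourth-moment sum is $O(n\cdot (nh_n)^{-2}h_n)=O((nh_n)^{-1})\to0$.

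I would organize the write-up as: Step 1, the generator/moment expansions for $E[\varphi(\Delta_jJ)^\bullet (\Delta_jJ)^\bullet]$ and the bias estimate $\sqrt{nh_n}R_n\to0$; Step 2, the Euler-residual substitution reducing $\sqrt{nh_n}G_n(\theta_0)$ to $\sum_j\xi_{j,n}+o_p(1)$, with the residual bounds citing the same lemmas used in \cite{Masuda2013}; Step 3, verification of the two martingale-CLT hypotheses and identification of the limit covariance with $\Sigma$; Step 4, invoke the martingale CLT and the Cram\'er--Wold device, noting positive definiteness of $\Sigma$ is assumed so no degeneracy issue arises. The main obstacle I anticipate is Step 2: controlling the Euler residuals $r_j$ uniformly enough that not only $G_n^\alpha,G_n^\gamma$ but also their contribution to the \emph{joint} predictable variance with the $u_n$-block is negligible — this requires the small-jump/Blumenthal--Getoor control encoded in the choice of $\rho$ and the assumption $nh_n^{1+\epsilon_0}\to\infty$, and it is where the interplay between the sampling design and the noise activity index genuinely enters. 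The ergodic averaging in Step 3 is routine given Assumption \ref{Stability}, and Step 1 is a standard Dynkin-formula computation, but assembling the cross-terms so that the off-diagonal blocks $\Sigma_{12}$ come out with the correct constants (the factor $2$ for the $\gamma$-components, tracing back to $\partial_\gamma c^{-2}=-2c^{-3}\partial_\gamma c$) demands care in bookkeeping.
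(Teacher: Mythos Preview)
Your approach is essentially the paper's own: the paper packages your Steps~1--2 into Lemmas~\ref{Rate of gap}, \ref{AN}, and \ref{AN2}, and the proof of Theorem~\ref{JAN1} itself reduces to checking exactly the cross-term predictable covariances you describe in Step~3. Two small corrections are worth flagging. First, the centering in the $\gamma$-block must be $(\Delta_jJ)^2 - h_n$, not $(\Delta_jJ)^2 - 1$, since $E[(\Delta_jJ)^2]=h_n$; without this your $\xi_{j,n}$ is not a martingale difference and the CLT machinery does not apply. Second, your anticipated obstacle is misplaced: the Euler-residual control in Step~2 needs only $nh_n^2\to 0$ together with the moment and smoothness bounds (see the proof of Lemma~\ref{AN2}), and neither the Blumenthal--Getoor parameter $\rho$ nor the condition $nh_n^{1+\epsilon_0}\to\infty$ enters here --- those ingredients are used for Theorem~\ref{Bias correction}, not for Theorem~\ref{JAN1}.
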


\begin{Rem}\label{rem_mconv}
The moment convergence of the estimator 
is crucial for detecting the asymptotic behavior of statistics which can be used, for example, derivation of information criteria, mean bias correction and investigation of mean squared prediction error; see the references cited in \cite{Masuda2013}. 
As for the GQMLE $\hat{\theta}_n$, under Assumption \ref{Sampling design}-\ref{Nondegeneracy} we can deduce
\begin{equation}\nn
E[f(\hat{v}_n)]\longrightarrow\int_{\mathbb{R}^p}f(u)\phi(u;0,\mathcal{I}(\theta_0)^{-1}\Sigma_{22}(\mathcal{I}(\theta_0)^{-1})^T)du,
\end{equation}
for every continuous function $f:\mathbb{R}^p\to\mathbb{R}$ of at most polynomial growth: see \cite[Theorem 2.7]{Masuda2013}. 
In this paper, we do not go into details of the moment convergence of $f(\hat{u}_n)$.
\end{Rem}


Define the statistics $\hat{\Gamma}_n\in\mathbb{R}^{p+q}\otimes\mathbb{R}^{p+q}$ by
\begin{equation}\nn
\hat{\Gamma}_n= \begin{pmatrix}I_q&-\hat{B}_n\\O&-\partial_\theta (G_{n}^{\al}, G_n^{\gam})(\hat{\theta}_n)\end{pmatrix}, 
\end{equation}
where $\hat{B}_n=\begin{pmatrix}O&\hat{b}_n\\\end{pmatrix}\in\mathbb{R}^q\otimes\mathbb{R}^p$.
We also define
\begin{equation}\nn
\hat{\Sigma}_n=\begin{pmatrix}\hat{\Sigma}_{11,n}&\hat{\Sigma}_{12,n}\\
\hat{\Sigma}_{12,n}^T&\hat{\Sigma}_{22,n}\end{pmatrix},
\end{equation}
with $\hat{\Sigma}_{11,n}\in\mathbb{R}^q\otimes\mathbb{R}^q$, $(\hat{\Sigma}_{12,n,kl})_{k,l}\in\mathbb{R}^q\otimes\mathbb{R}^p$ and $(\hat{\Sigma}_{22,n,kl})_{k,l}\in\mathbb{R}^p\otimes\mathbb{R}^p$, where
\begin{align*}
\hat{\Sigma}_{11,n}&=\frac{1}{nh_n}\sum_{j}\varphi^{\otimes2}(\hat{\delta}_j),\\
\hat{\Sigma}_{12,n,kl}&=\begin{cases}
\ds{\bigg(\frac{1}{nh_n}\sum_{j}\varphi_k(\hat{\delta}_j)\hat{\delta}_j\bigg)\bigg(\frac{1}{n}\sum_{j}\frac{\partial_{\alpha_l}\hat{a}_{t_{j-1}}}{\hat{c}_{t_{j-1}}}\bigg)} &(1 \leq l \leq p_\alpha),\\[3mm]
\ds{\bigg(\frac{2}{nh_n}\sum_{j}\varphi_k(\hat{\delta}_j)\hat{\delta}^2_j\bigg)\bigg(\frac{1}{n}\sum_{j}\frac{\partial_{\gamma_l}\hat{c}_{t_{j-1}}}{\hat{c}_{t_{j-1}}}\bigg)} &(p_\alpha+1 \leq l \leq p), 
\end{cases}\\
\hat{\Sigma}_{22,n,kl}&=\begin{cases}
\ds{\frac{1}{n}\sum_{j}\frac{\partial_{\alpha_k}\hat{a}_{t_{j-1}}\partial_{\alpha_l}\hat{a}_{t_{j-1}}}{\hat{c}_{t_{j-1}}^2}}
&(k,l \in \left\{1,\dots, p_\alpha \right\}),\\[3mm]
\ds{\bigg(\frac{4}{n}\sum_{j}\frac{\partial_{\gamma_k}\hat{c}_{t_{j-1}}\partial_{\gamma_l}\hat{c}_{t_{j-1}}}{\hat{c}_{t_{j-1}}^2}\bigg)\bigg(\frac{1}{nh_n}\sum_{j}\hat{\delta}^4_j\bigg)} &(k,l \in \left\{p_\alpha +1,\dots,p \right\}),\\[3mm]
\ds{\bigg(\frac{2}{n}\sum_{j}\frac{\partial_{\alpha_k}\hat{a}_{t_{j-1}}\partial_{\gamma_l}\hat{c}_{t_{j-1}}}{\hat{c}_{t_{j-1}}^2}\bigg)\bigg(\frac{1}{nh_n}\sum_{j}\hat{\delta}^3_j\bigg)} &(k \in \left\{1,\dots, p_\alpha \right\},l \in \left\{p_\alpha +1,\dots ,p \right\}).
\end{cases}
\end{align*}
It will turn out that $\hat{\Sigma}_n$ is a consistent estimator of the asymptotic variance $\Sigma$, 
which depends on the true value $(\theta_0,\nu_0(\varphi))$ under our assumption.

By use of Theorem \ref{Bias correction} and Theorem \ref{JAN1}, we can derive the asymptotic normality of the statistics $(\hat{u}_n,\hat{v}_n)$ only constructed from the observed data $(X_{h_n},X_{2h_n},\dots,X_{nh_n})$.

\begin{Cor}\label{JAN2}
Suppose that Assumptions \ref{Sampling design}-\ref{Moment fitting function} hold and that $\Sig$ is positive definite.
Then $\hat{\Sig}_{n}\cip\Sig$ and
\begin{equation}\label{hm:eq2}
\hat{\Sigma}_n^{-1/2}\hat{\Gamma}_n\begin{pmatrix}\hat{u}_n\\ 
\hat{v}_n\end{pmatrix}\overset{\mathcal{L}}\longrightarrow\mathcal{N}(0, I_{p+q}),
\end{equation}
where $I_p$ denotes the $p\times p$ identity matrix.
\end{Cor}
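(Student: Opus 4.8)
The plan is to reduce the left-hand side of \eqref{hm:eq2} to $\hat\Sigma_n^{-1/2}\sqrt{nh_n}G_n(\theta_0)+o_p(1)$ and then apply Theorem \ref{JAN1}, the consistency $\hat\Sigma_n\cip\Sigma$, and Slutsky's lemma. For the reduction I would write $\hat\Gamma_n(\hat u_n,\hat v_n)^{T}$ in block form. Since $\hat B_n=(O\ \hat b_n)$ and the last $p_\gamma$ coordinates of $\hat v_n$ are $\hat w_n$, the top $q$ coordinates are $\hat u_n-\hat B_n\hat v_n=\hat u_n-\hat b_n[\hat w_n]$, which by Theorem \ref{Bias correction} equals $u_n+o_p(1)$ (note that this uses \eqref{SE} directly, so no convergence of $\hat b_n$ itself is needed here). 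For the bottom $p$ coordinates I would invoke the GQMLE asymptotics of \cite{Masuda2013}: under Assumptions \ref{Sampling design}--\ref{Nondegeneracy} one has $\hat\theta_n\cip\theta_0$, $\hat v_n=O_p(1)$, and the stochastic expansion $\mci(\theta_0)\hat v_n=\sqrt{nh_n}(G_n^\alpha,G_n^\gamma)(\theta_0)+o_p(1)$; moreover, by the ergodic law of large numbers for $\pi_0$ together with the asymptotic negligibility of the jump-driven part of the increments (the same estimates as in \cite{Masuda2013}), $\partial_\theta(G_n^\alpha,G_n^\gamma)(\hat\theta_n)\cip-\mci(\theta_0)$. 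Then
\[
-\partial_\theta(G_n^\alpha,G_n^\gamma)(\hat\theta_n)\hat v_n
=\mci(\theta_0)\hat v_n+\bigl(-\partial_\theta(G_n^\alpha,G_n^\gamma)(\hat\theta_n)-\mci(\theta_0)\bigr)\hat v_n
=\sqrt{nh_n}(G_n^\alpha,G_n^\gamma)(\theta_0)+o_p(1),
\]
since the bracketed factor is $o_p(1)$ while $\hat v_n=O_p(1)$. Hence $\hat\Gamma_n(\hat u_n,\hat v_n)^{T}=\sqrt{nh_n}G_n(\theta_0)+o_p(1)$, so Theorem \ref{JAN1} and Slutsky's lemma give $\hat\Gamma_n(\hat u_n,\hat v_n)^{T}\cil\mathcal{N}_{p+q}(0,\Sigma)$.

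Next I would prove $\hat\Sigma_n\cip\Sigma$ by noting that each entry of $\hat\Sigma_n$ is a product of two averages of one of two types. The averages $\frac1n\sum_j g(X_{t_{j-1}},\hat\theta_n)$, with $g$ running through $\partial_{\alpha_k}a\,\partial_{\alpha_l}a/c^2$, $\partial_{\gamma_k}c\,\partial_{\gamma_l}c/c^2$, $\partial_{\alpha_k}a\,\partial_{\gamma_l}c/c^2$, $\partial_{\alpha_l}a/c$ and $\partial_{\gamma_l}c/c$, converge in probability to $\int g(x,\theta_0)\pi_0(dx)$ by Assumption \ref{Stability}, the polynomial bounds of Assumption \ref{Smoothness}, and $\hat\theta_n\cip\theta_0$. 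The averages $\frac1{nh_n}\sum_j f(\hat\delta_j)$, with $f$ running through $\varphi^{\otimes2}$, $z\mapsto\varphi_k(z)z$, $z\mapsto\varphi_k(z)z^2$, $z\mapsto z^3$ and $z\mapsto z^4$, converge in probability to $\nu_0(f)$; this would be established exactly along the lines of the proof of Theorem \ref{Bias correction}, first replacing $\hat\delta_j$ by $\Delta_jJ$ through the Euler-type estimates used there and then applying the law-of-large-numbers counterpart of \eqref{ETEL}, after checking that each such $f$ satisfies the decay near the origin, the polynomial growth at infinity, and the smoothness required by that argument (here one uses $\varphi\in\mathcal{K}$ and the freedom to take $\rho$ below $2$, so that $\varphi^{\otimes2}(z)=O(|z|^{2\rho})$, $\varphi_k(z)z^{i}=O(|z|^{\rho+i})$ near $0$, and $\nu_0(f)<\infty$ by Assumption \ref{Moments}). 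Multiplying the paired limits and comparing with the definitions of $\Sigma_{11},\Sigma_{12},\Sigma_{22}$ yields $\hat\Sigma_n\cip\Sigma$ entrywise.

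To conclude, since $\Sigma$ is positive definite, $\hat\Sigma_n$ is invertible with probability tending to $1$, and by continuity of the symmetric positive-definite square root and of matrix inversion on the positive-definite cone, $\hat\Sigma_n^{-1/2}\cip\Sigma^{-1/2}$. Combining this with $\hat\Gamma_n(\hat u_n,\hat v_n)^{T}\cil\mathcal{N}_{p+q}(0,\Sigma)$ via Slutsky's lemma gives $\hat\Sigma_n^{-1/2}\hat\Gamma_n(\hat u_n,\hat v_n)^{T}\cil\Sigma^{-1/2}\mathcal{N}_{p+q}(0,\Sigma)=\mathcal{N}_{p+q}(0,I_{p+q})$, which is \eqref{hm:eq2}.

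I expect the main obstacle to be the second family of laws of large numbers in the consistency step, namely $\frac1{nh_n}\sum_j f(\hat\delta_j)\cip\nu_0(f)$ for the auxiliary functions $f$ (especially $\varphi^{\otimes2}$ and the functions of the form $z\mapsto\varphi_k(z)z^i$): this is not literally contained in Theorem \ref{Bias correction}, and proving it requires re-running its Euler-approximation estimates while carefully tracking that the required $\mathcal{K}$-type regularity and $\nu_0$-integrability are preserved under the relevant operations on $\varphi$. Everything else is routine bookkeeping built on Theorems \ref{Bias correction} and \ref{JAN1}, the GQMLE asymptotics of \cite{Masuda2013}, and Slutsky's lemma.
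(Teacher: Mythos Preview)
Your proposal is correct and follows essentially the same route as the paper: reduce $\hat\Gamma_n(\hat u_n,\hat v_n)^T$ to $\sqrt{nh_n}G_n(\theta_0)+o_p(1)$ via Theorem~\ref{Bias correction} and the GQMLE expansion (Lemmas~\ref{AN2}--\ref{AN3}), then combine Theorem~\ref{JAN1} with $\hat\Sigma_n\cip\Sigma$ and Slutsky. The only substantive difference is that the paper packages the step you flag as the main obstacle---the laws of large numbers $\frac{1}{nh_n}\sum_j f(\hat\delta_j)\cip\nu_0(f)$ for $f=\varphi^{\otimes2},\,z\varphi,\,z^2\varphi,\,z^3,\,z^4$---into a dedicated function class $\mathcal{K}_3$ (governing $\partial f$ near $0$ and at infinity) and a short lemma (Lemma~\ref{K3}) giving the sufficient condition $\partial f(z)=O(|z|^{1-\epsilon_0})$ near $0$; the Euler-approximation chain $\hat\delta_j\to\delta_j\to\Delta_jJ$ is then handled once for all $f\in\mathcal{K}_1\cap\mathcal{K}_3$, rather than re-running the Theorem~\ref{Bias correction} estimates case by case as you suggest. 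One small inaccuracy: $\rho$ is not free but fixed by $\rho>(1-\epsilon_0)\vee\beta$, so you cannot ``take $\rho$ below $2$''; the correct check is that $\partial(\varphi_k\varphi_l)=\varphi_k\partial\varphi_l+\varphi_l\partial\varphi_k=O(|z|^{\rho+1})$ near $0$, which already dominates $|z|^{1-\epsilon_0}$ since $\rho>0$.
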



By means of Lemma \ref{AN2} and Lemma \ref{K3}, we can observe that 
\begin{equation}
\hat{b}_n \overset{P_0}\longrightarrow 
b_{0}:=-\left(\int\zeta(z)\nu_0(dz)\right)\otimes\left(\int\frac{\partial_\gamma c(x,\gamma_0)}{c(x,\gamma_0)}\pi_0(dx)\right).
\nonumber
\end{equation}
Put $B_0:=\begin{pmatrix}O&b_0\\\end{pmatrix}\in\mathbb{R}^q\otimes\mathbb{R}^p$.
Under our assumptions, we can deduce that
\begin{equation}
\hat{\Sig}_{n} \overset{P_0}\longrightarrow \Sig\quad\text{and}\quad
\hat{\Gam}_{n} \overset{P_0}\longrightarrow \Gam:=\begin{pmatrix}I_q&-B_0\\O&-\mathcal{I}(\theta_0)
\end{pmatrix}.
\nonumber
\end{equation}
Thus it follows from \eqref{hm:eq2} that we have the joint asymptotic normality of our estimators:
\begin{equation}
\begin{pmatrix}\hat{u}_n\\ 
\hat{v}_n\end{pmatrix}\overset{\mathcal{L}}\longrightarrow\mathcal{N}(0, \Gam^{-1}\Sig(\Gam^{-1})^T).
\nonumber
\end{equation}

\begin{Rem}\label{hm:rem_added1}
Recall that if $J_t$ is the standard Wiener process, then the rate of $\hat{\gamma}_{n}-\gamma_0$ is $\sqrt{n}$ (see \cite{Kessler1997}). 
The case $\int z^4\nu_0(dz)=0$ corresponds to this. 
As was noted in \cite{Masuda2013} (and as trivial from Lemma \ref{AN2} and Lemma \ref{AN3}), 
$\hat{\alpha}_{n}$ and $\hat{\gamma}_{n}$ are asymptotically orthogonal (hence asymptotically independent) if $\int z^{3}\nu_{0}(dz)=0$. 
Likewise, the asymptotic independence between $\hat{\theta}_n=(\hat{\al}_{n},\hat{\gam}_{n})$ and $(nh_{n})^{-1}\sum_{j}\vp(\hat{\delta}_j)$ 
can be easily seen from the expression of $\Sig$: in particular, 
$(\hat{\al}_n, \hat{\gam}_n)$ is asymptotically independent of $(nh_{n})^{-1}\sum_{j}\vp(\hat{\delta}_j )$ 
if both $\int \varphi_{k}(z)z \nu_0(dz)$ and $\int \varphi_{k}(z)z^2 \nu_0(dz)$ are zero.

\end{Rem}


\medskip

Now we assume that the L\'{e}vy measure $\nu_{0}$ is parametrized by a parameter $\xi\in\Theta_\xi$, 
say $\nu_{\xi}$, where $\Theta_\xi$ is a bounded convex domain in $\mathbb{R}^q$, 
and that there exists a true value $\xi_0\in\Theta_\xi$. 
The delta method then leads to the following corollary.

\begin{Cor}\label{Delta}
If the conditions of Corollary \ref{JAN2} hold and the equation $F(\int\varphi(z)\nu_\xi(dz), \theta)=(\xi,\theta)$ 
has a $\mathcal{C}^1$-solution $F: \mbbr^{q}\times\Theta\to\Theta_{\xi}\times\Theta$ such that $\p F(\nu_{\xi_0}(\varphi), \theta_{0})$ is invertible, then 
\begin{equation}
\p\hat{F}_n := \partial F\bigg(\frac{1}{nh_n}\sum_j\varphi(\hat{\delta}_j),\,
\hat{\theta}_n\bigg)\overset{P_{0}}\longrightarrow \p F(\nu_{\xi_0}(\varphi), \theta_{0}).
\nonumber
\end{equation}
Moreover, we have
\begin{equation}\nn
\{\p\hat{F}_n \hat{\Gamma}_n^{-1} \hat{\Sigma}_n^{1/2}\}^{-1}
\sqrt{nh_n}
(\hat{\xi}_n-\xi_0,\, 
\hat{\theta}_n-\theta_0)
\overset{\mathcal{L}}\longrightarrow\mathcal{N}(0,I_{p+q}),
\end{equation}
where $\hat{\xi}_n$ denotes the random vector consisting of the first $q$ elements of 
$F(\frac{1}{nh_n}\sum_j\varphi(\hat{\delta}_j), \hat{\theta}_{n})$.
\end{Cor}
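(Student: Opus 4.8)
The plan is to read the assertion as a joint delta-method statement: $\hat\xi_n$ is a $\mathcal C^1$-function of the two statistics $\big((nh_n)^{-1}\sum_j\varphi(\hat\delta_j),\hat\theta_n\big)$, whose joint $\sqrt{nh_n}$-asymptotics are given by Corollary \ref{JAN2}, and $\hat\theta_n$ is itself one of those statistics, so that the whole vector $\sqrt{nh_n}(\hat\xi_n-\xi_0,\hat\theta_n-\theta_0)$ is, up to $o_p(1)$, a fixed linear image of $(\hat u_n,\hat v_n)$; continuous-mapping arguments for the plug-in matrices then conclude the proof. As preliminaries I would record the required convergences. The joint convergence $(\hat u_n,\hat v_n)\overset{\mathcal L}\longrightarrow\mathcal N(0,\Gamma^{-1}\Sigma(\Gamma^{-1})^T)$ stated before the corollary gives $(\hat u_n,\hat v_n)=O_p(1)$, hence $(nh_n)^{-1}\sum_j\varphi(\hat\delta_j)-\nu_0(\varphi)=\hat u_n/\sqrt{nh_n}=o_p(1)$ and $\hat\theta_n-\theta_0=\hat v_n/\sqrt{nh_n}=o_p(1)$; since $\nu_0=\nu_{\xi_0}$, this means $z_n:=\big((nh_n)^{-1}\sum_j\varphi(\hat\delta_j),\hat\theta_n\big)\overset{P_0}\longrightarrow z_0:=(\nu_{\xi_0}(\varphi),\theta_0)$, and continuity of $\partial F$ yields the first assertion $\partial\hat F_n\overset{P_0}\longrightarrow\partial F(z_0)$. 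Together with $\hat\Sigma_n\overset{P_0}\longrightarrow\Sigma$ and $\hat\Gamma_n\overset{P_0}\longrightarrow\Gamma$ (both recorded before the corollary), and using that $\Sigma$ is positive definite while $\Gamma$ and $\partial F(z_0)$ are invertible, we get that $\hat\Sigma_n$, $\hat\Gamma_n$, $\partial\hat F_n$ are invertible with probability tending to one and, on that event, $\hat\Sigma_n^{\pm1/2}\to\Sigma^{\pm1/2}$, $\hat\Gamma_n^{-1}\to\Gamma^{-1}$, $(\partial\hat F_n)^{-1}\to(\partial F(z_0))^{-1}$ in probability.

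Next I would expand $F$ about $z_0$. One structural point is needed: because the L\'evy measure $\nu_\xi$ does not involve $\theta$, differentiating the identity $F(\nu_\xi(\varphi),\theta)=(\xi,\theta)$ in $\theta$ shows that at $z_0$ the $\theta$-derivative of the first ($\mathbb{R}^q$-valued) component of $F$ vanishes and that of the second ($\mathbb{R}^p$-valued) component equals $I_p$, and invertibility of $\partial F(z_0)$, combined with differentiating the same identity in $\xi$, forces the $y$-derivative of the second component to vanish at $z_0$ as well. Using this, $F(z_0)=(\xi_0,\theta_0)$, a first-order Taylor expansion of $F\in\mathcal C^1$ at $z_0$ (legitimate since $z_n\overset{P_0}\longrightarrow z_0$ and $\sqrt{nh_n}(z_n-z_0)=(\hat u_n,\hat v_n)=O_p(1)$), and $\partial\hat F_n=\partial F(z_0)+o_p(1)$, we obtain
\[
\sqrt{nh_n}\,(\hat\xi_n-\xi_0,\ \hat\theta_n-\theta_0)=\partial F(z_0)\begin{pmatrix}\hat u_n\\ \hat v_n\end{pmatrix}+o_p(1)=\partial\hat F_n\begin{pmatrix}\hat u_n\\ \hat v_n\end{pmatrix}+o_p(1),
\]
the lower block of the right-hand side being exactly $\hat v_n=\sqrt{nh_n}(\hat\theta_n-\theta_0)$.

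Finally I would invoke the algebraic identity $\{\partial\hat F_n\hat\Gamma_n^{-1}\hat\Sigma_n^{1/2}\}^{-1}\partial\hat F_n=\hat\Sigma_n^{-1/2}\hat\Gamma_n$, valid on the event where all matrices involved are invertible. Multiplying the preceding display on the left by $\{\partial\hat F_n\hat\Gamma_n^{-1}\hat\Sigma_n^{1/2}\}^{-1}$, which is $O_p(1)$ since it converges to $\Sigma^{-1/2}\Gamma(\partial F(z_0))^{-1}$, yields
\[
\{\partial\hat F_n\hat\Gamma_n^{-1}\hat\Sigma_n^{1/2}\}^{-1}\sqrt{nh_n}(\hat\xi_n-\xi_0,\ \hat\theta_n-\theta_0)=\hat\Sigma_n^{-1/2}\hat\Gamma_n\begin{pmatrix}\hat u_n\\ \hat v_n\end{pmatrix}+o_p(1),
\]
whose right-hand side converges in law to $\mathcal N(0,I_{p+q})$ by \eqref{hm:eq2}, proving the claim.

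I expect the main obstacle to be bookkeeping rather than hard analysis: one has to keep track that $\hat\Gamma_n^{-1}$, $(\partial\hat F_n)^{-1}$ and $\hat\Sigma_n^{\pm1/2}$ are only defined on events of probability tending to one, that the $\mathcal C^1$ delta-method remainder is $o_p$ of $\|z_n-z_0\|$ and hence $o_p(1)$ after scaling by $\sqrt{nh_n}$, and that $\hat\theta_n$ serving simultaneously as an argument of $F$ and as a block of the target vector is consistent with the Jacobian computation above. All the probabilistic content --- the joint convergence of $(\hat u_n,\hat v_n)$ and the consistency of $\hat\Sigma_n$ and $\hat\Gamma_n$ --- is already supplied by Theorem \ref{Bias correction}, Theorem \ref{JAN1} and Corollary \ref{JAN2}.
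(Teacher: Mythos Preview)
Your argument is correct and is exactly the delta-method route the paper takes: the paper's own proof merely notes the consistency of $\frac{1}{nh_n}\sum_j\varphi(\hat\delta_j)$ from Theorem \ref{Bias correction} and then cites van der Vaart's Theorem 3.1 for the rest. Your version spells out the mechanics and, in particular, your computation of the block structure of $\partial F(z_0)$ (showing $\partial_\theta F_1(z_0)=0$, $\partial_\theta F_2(z_0)=I_p$, $\partial_y F_2(z_0)=0$ from the defining identity) explicitly reconciles the fact that $(\hat\xi_n,\hat\theta_n)$ is not literally $F(z_n)$ off the manifold $\{(\nu_\xi(\varphi),\theta)\}$ --- a subtlety the paper's two-line proof leaves implicit.
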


\bigskip

\begin{Rem}
As a matter of fact, the absence of the Wiener part in the underlying SDE model \eqref{Model} 
is not essential in our results. Consider
\begin{equation}
dX_{t}=a(X_{t},\al)dt+\sig(X_{t},\gam)dW_{t}+c(X_{t-},\gam)dJ_{t},
\label{hm:rev.eq1}
\end{equation}
where $W$ is an $(\mcf_{t})$-adapted standard Wiener process independent of $(X_{0},J)$. 
We first note that the results of \cite{Masuda2013} still ensures the asymptotic normality of 
the corresponding GQMLE of $(\al,\gam)$ at rate $\sqrt{nh_{n}}$, in exchange for, in particular, 
some stringent identifiability condition on the scale parameter $\gam$; 
e.g. if $b(x,\gam)=\gam_{1}$ and $c(x,\gam)=\gam_{2}$ for $\gam=(\gam_{1},\gam_{2})$, 
then trivially we cannot estimate $\gam_{1}$ and $\gam_{2}$ separately by the naive Gaussian quasi-likelihood. 
Introducing an additional condition, we could deduce Theorem \ref{SE} 
with the same the expression \eqref{SE_b} of $\hat{b}_{n}$, 
except for the trivial change of the form of $\Sigma$, 
which stems from the necessary modification of the ``one-step'' variance 
in construction of the Gaussian quasi-likelihood corresponding to \eqref{hm:rev.eq1}, 
that is, from ``$hc_{t_{j-1}}^{2}(\gam)$'' to ``$h\{\sig_{t_{j-1}}^{2}(\gam)+c^{2}_{t_{j-1}}(\gam)\}$''; 
see Eq.(2.10) and the expression of $\Sig_{0}$ in \cite[pages 1600 and 1601]{Masuda2013} for details. 
More specifically, in the derivation of \eqref{SE}, 
which amounts to the stochastic expansions and estimates concerning 
the terms $b^{(1)}_{n}$ and $b^{(2)}_{n}$ in the proof of Lemma \ref{BCL}, 
it turned out that the presence of the Wiener part entails an additional condition 
on the behavior of the second derivative of $\vp$ around the origin, 
in order to make the remainder terms in the Taylor expansion indeed negligible. 
Also to be mentioned is that the independence between $W$ and $J$ is crucial in the computation of 
the leading-term of \eqref{SE}: formally, in applying \cite[Lemma 9]{Genon-catalot1993} we make use of the calculations such as 
$E^{j-1}_0[g(\D_j J)\D_j W]=0$ and $E^{j-1}_0[g^{2}(\D_j J)(\D_j W)^2]=h_nE[g^{2}(\D_j J)]$ to obtain $\frac{1}{\sqrt{nh_n}}\sumj f_{t_{j-1}}g(\D_j J)\D_j W=o_p(1)$ for suitable $f$ and $g$ with $g(0)=0$. 
%
%
Building on these observations, the proofs in case of \eqref{hm:rev.eq1} go through as in \eqref{Model} without further difficulty, 
while the precise statement concerning the model \eqref{hm:rev.eq1} requires a series of changes of notation. 
We would like to omit details of the full picture.
\label{hm:rem_wiener.part}
\end{Rem}

\color{black}

\section{Numerical experiments}\label{simulations}
Consider the following one-dimensional L\'{e}vy driven SDE:
\begin{equation}\label{cModel}
dX_t=-\alpha X_tdt - \frac{\gamma}{1+X_{t-}^2}dJ_t,\quad X_0=0,
\end{equation}
where the true value is $(\alpha_0,\gamma_0)=(0.5,0.2)$; the driving noise process is the normal inverse Gaussian L\'{e}vy process such that $\mathcal{L}(J_t)=NIG(\delta,0,\delta t,0)$ with $\delta=1,5$, or $10$.
It is well known that the cumulant function of $J_1$ is explicitly given by
\begin{equation}\label{cum}
\kappa(u):=\log E[\exp(iuJ_1)]=\int(\cos(uz)-1)\nu_{0,\del}(dz)=\delta(\delta-\sqrt{\delta^2+u^2}).
\end{equation}
where $\nu_{0,\delta}$ denotes the L\'{e}vy measure of $NIG(\delta,0,\delta,0)$. 
Hence it follows that $E[J_1]=0, E[J_1^2]=1,$ and that $\mathcal{L}(J_t)\overset{\mathcal{L}}\longrightarrow N(0,t)$ as $\delta\to\infty$.

In addition to $(\alpha_0,\gamma_0)$, we estimate the value of $\kappa(u)$ for some $u$, so
by the symmetry of $\mathcal{L}(J_t)$ we set the moment fitting function $\varphi(x,u)=\cos(ux)-1$. 
Note that the SDE model \eqref{cModel} satisfies all of our assumptions; see \cite[Proposition 5.4]{Masuda2013} for the stability condition. 

Put $T_{n}=nh_{n}$. 
Our simulations were done for $(T_n,h_n)=(10,0.05),(50,0.025)$ and $(100,0.01)$ with respect to each $\delta$. 
We simulated 1000 independent sample paths for true model with sufficiently small step size by use of Euler scheme and the 1000 estimates $(\hat{\alpha}_n,\hat{\gamma}_n,\widehat{\kappa(1)}_n,\widehat{\kappa(3)}_n,\widehat{\kappa(5)}_n)$, where
\begin{equation}
\widehat{\kappa(u)}_n:=\frac{1}{nh_n}\sum_{j=1}^n\varphi(\hat{\delta}_j,u),
\nonumber
\end{equation}
were calculated for each sample path. 
For generating sample paths, we used {\tt yuima} package \cite{yuima} for R statistical environment \cite{rct}. 
The mean and the standard deviation of $(\hat{\alpha}_n,\hat{\gamma}_n,\widehat{\kappa(1)}_n,\widehat{\kappa(3)}_n,\widehat{\kappa(5)}_n)$ were computed; these are shown in Table \ref{1}-\ref{5}.


From the results, we can observe the following:
\begin{itemize}
\item the performance of $\hat{\alpha}_n$ can be affected not by the value of $\delta$ but by the value of $T_n$;
\item the performance of $\hat{\gamma}_n$ seems to improve in terms of standard deviation as the value of $\delta$ increases, 
which can be thought to come from the fact that the asymptotic variance of $\hat{\gamma}_n$ tends to 0 as $\delta\to\infty$ (we have $\int z^4 \nu_0(dz)=3\delta^{-2}$);
\item the performance of $\widehat{\kappa(u)}_n$ becomes better for smaller $u$. 
This is quite natural because 
by Theorem \ref{JAN1} the asymptotic variance of $\widehat{\kappa(u)}_n$ is $\int \varphi(x,u)^2\nu_{0,\delta}(dx)$. 
From the half-angle formula $\varphi(x,u)^2=(\cos(ux)-1)^2=-2\varphi(x,u)+\frac{1}{2}\varphi(x,2u)$, 
we have
\begin{align*}
\int \varphi(x,u)^2\nu_{0,\delta}(dx)&=\int\left(-2\varphi(x,u)+\frac{1}{2}\varphi(x,2u)\right) \nu_{0,\delta}(dx)\\
&=-\frac{3}{2}\delta^2+\delta\left(2\sqrt{\delta^2+u^2}-\frac{1}{2}\sqrt{\delta^2+4u^2}\right)=:f(\delta,u).
\end{align*}
Since $\partial_u f(\delta,u)=2\delta(u/\sqrt{\delta^2+u^2}-u/\sqrt{\delta^2+4u^2})>0$ for all $\delta>0$, 
the asymptotic variance of $\widehat{\kappa(u)}_n$ is increasing in $u$, 
clarifying better performance of $\widehat{\kappa(q)}_n$ for smaller value of $q$. 
\end{itemize}

In this example, it should be noted that a large value of $u$ brings about large finite-sample bias and variance of 
the scaled estimators: $\sqrt{nh_{n}}\{\widehat{\kappa(u)}_n - \kappa(u)\}$, 
because then 
both the term $\hat{b}_{n}=\hat{b}_{n}(u)$ and the $o_{p}(1)$ term in the right-hand side of \eqref{SE} 
will become large in an increasing way with the value $|u|$; as seen from the proof, 
the latter term involves higher-order partial derivatives of $\vp(x,u)$ with respect to $x$.

\begin{table}[t]
\begin{center}
\caption{The performance of the two-step type estimators with $\delta=1$ and the true value $(\alpha_0,\gamma_0,\kappa(1),\kappa(3),\kappa(5))=(0.5,0.2,-0.4142,-2.1623,-4.0990)$; the mean is given with the standard deviation in parentheses.}
\begin{tabular}{ccccccc}
\hline
&&&&&&\\[-3.5mm]
$T_n$ & $h_n$ & $\hat{\alpha}_n$ & $\hat{\gamma}_n$ & $\widehat{\kappa(1)}_n$ & $\widehat{\kappa(3)}_n$ & $\widehat{\kappa(5)}_n$\\[1mm] \hline
10&0.05&0.6609&0.1923&-0.4455&-2.2888&-4.0765 \\
&&(0.3912)&(0.0489)&(0.0340)&(0.5594)&(0.9300)\\ 
50&0.025&0.5394&0.1967&-0.4243&-2.1713&-4.0020\\  
&&(0.1396)&(0.0224)&(0.0346)&(0.2801)&(0.4929)\\ 
100&0.01&0.5205&0.1986&-0.4198&-2.1730&-4.0755\\ 
&&(0.0994)&(0.0163)&(0.0292)&(0.2087)&(0.3746)\\[1mm] \hline
\end{tabular}
\label{1}
\end{center}
\end{table}

\begin{table}[t]
\begin{center}
\caption{The performance of the two-step type estimators with $\delta=5$ and the true value $(\alpha_0,\gamma_0,\kappa(1),\kappa(3),\kappa(5))=(0.5,0.2,-0.4951,-4.1548,-10.3553)$; the mean is given with the standard deviation in parentheses.}
\begin{tabular}{ccccccc}
\hline
&&&&&&\\[-3.5mm]
$T_n$ & $h_n$ & $\hat{\alpha}_n$ & $\hat{\gamma}_n$ & $\widehat{\kappa(1)}_n$ & $\widehat{\kappa(3)}_n$ & $\widehat{\kappa(5)}_n$\\[1mm] \hline
10&0.05&0.6762&0.1969&-0.4898&-3.7868&-8.1786\\
&&(0.4052)&(0.0144)&(0.0029)&(0.1469)&(0.5172)\\
50&0.025&0.5302&0.1989&-0.4921&-3.9482&-9.1297\\
&&(0.1421)&(0.0056)&(0.0016)&(0.0725)&(0.2480)\\
100&0.01&0.5160&0.1995&-0.4939&-4.0726&-9.8473\\
&&(0.1000)&(0.0040)&(0.0010)&(0.0479)&(0.1758)\\[1mm] \hline
\end{tabular}
\label{3}
\end{center}
\end{table}

\begin{table}[t]
\begin{center}
\caption{The performance of the two-step type estimators with $\delta=10$ and the true value $(\alpha_0,\gamma_0,\kappa(1),\kappa(3),\kappa(5))=(0.5,0.2,-0.4988,-4.4031,-11.8034)$; the mean is given with the standard deviation in parentheses.}
\begin{tabular}{ccccccc}
\hline
&&&&&&\\[-3.5mm]
$T_n$ & $h_n$ & $\hat{\alpha}_n$ & $\hat{\gamma}_n$ & $\widehat{\kappa(1)}_n$ & $\widehat{\kappa(3)}_n$ & $\widehat{\kappa(5)}_n$\\[1mm] \hline
10&0.05&0.6785&0.1962&-0.4928&-3.9645&-8.9678\\
&&(0.4130)&(0.0109)&(0.0013)&(0.0762)&(0.3412)\\
50&0.025&0.5250&0.1985&-0.4957&-4.1710&-10.2291\\
&&(0.1391)&(0.0039)&(0.0004)&(0.0234)&(0.1184)\\
100&0.01&0.5160&0.1994&-0.4975&-4.3084&	-11.1378\\
&&(0.0972)&(0.0023)&(0.0002)&(0.0116)&(0.0649)\\[1mm] \hline
\end{tabular}
\label{5}
\end{center}
\end{table}



\section{Proofs}\label{Appendix}

Throughout our proofs, we will often omit ``$n$'' of the notation $h_n$ and write $E$ instead of $E_0$. 

\subsection{Preliminary lemmas}

We begin with some lemmas.

\begin{Lem}\label{MC}
Suppose that Assumption \ref{Sampling design} and Assumption \ref{Moments} hold.
For all $q\geq2$, it follows that
\begin{equation}\nn
\frac{1}{h}E[|J_h|^q]\to\int|z|^q\nu_0(dz).
\end{equation}
\begin{proof}
Under Assumption \ref{Moments}, $\varphi(z)=|z|^q$ satisfies the condition of \cite[Theorem 1]{Lopez2008}. 
\end{proof}
\end{Lem}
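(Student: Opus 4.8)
The plan is to prove the convergence by a L\'evy--It\^o truncation argument; the only input from Assumption \ref{Sampling design} that is actually needed is $h_{n}\to0$ (hereafter written $h\to0$), while Assumption \ref{Moments} supplies $\int|z|^{p}\nu_{0}(dz)<\infty$ for every $p\ge2$ and, via the moment--cumulant relations together with the linearity $\mathrm{cum}_{p}(J_{h})=h\,\mathrm{cum}_{p}(J_{1})$ of the cumulants of the L\'evy process $J$ (with $\mathrm{cum}_{p}(J_{1})=\int z^{p}\nu_{0}(dz)$ for integer $p\ge2$), the crude estimate $E[|J_{h}|^{p}]=O(h)$ as $h\to0$ for every $p\ge2$ (the non-integer case following from $|x|^{p}\le x^{2}+|x|^{p'}$ for $2\le p\le p'$). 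First I would fix $\epsilon\in(0,1]$ and write $J_{h}=\bar{J}^{\epsilon}_{h}+J^{\epsilon}_{h}$, where $\bar{J}^{\epsilon}_{h}$ is the compensated small-jump martingale carried by $\{|z|\le\epsilon\}$ and $J^{\epsilon}$ is the big-jump part carried by $\{|z|>\epsilon\}$; the two are independent, $J^{\epsilon}$ is compound Poisson with intensity $\lambda_{\epsilon}:=\nu_{0}(\{|z|>\epsilon\})<\infty$ minus the deterministic drift $hc_{\epsilon}$, $c_{\epsilon}:=\int_{|z|>\epsilon}z\,\nu_{0}(dz)$, and I put $\sigma_{\epsilon}^{2}:=\int_{|z|\le\epsilon}z^{2}\nu_{0}(dz)$.

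Next I would bound the small-jump part: by the Burkholder--Davis--Gundy inequality applied to $\bar{J}^{\epsilon}$ together with a bound on its quadratic variation (equivalently, a Rosenthal-type inequality for L\'evy processes), $E[|\bar{J}^{\epsilon}_{h}|^{q}]\lesssim h\int_{|z|\le\epsilon}|z|^{q}\nu_{0}(dz)+\bigl(h\sigma_{\epsilon}^{2}\bigr)^{q/2}$, and since $q\ge2$ and $\epsilon\le1$ force $|z|^{q}\le z^{2}$ on $\{|z|\le\epsilon\}$ this is $\lesssim h\sigma_{\epsilon}^{2}+(h\sigma_{\epsilon}^{2})^{q/2}$; hence $\bar{J}^{\epsilon}_{h}-hc_{\epsilon}\to0$ in $L^{q}$ and $\limsup_{h\to0}h^{-1}E[|\bar{J}^{\epsilon}_{h}-hc_{\epsilon}|^{q}]\lesssim\sigma_{\epsilon}^{2}$. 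Then, conditioning on the number $N^{\epsilon}_{h}\sim\mathrm{Pois}(\lambda_{\epsilon}h)$ of big jumps and using the independence of $\bar{J}^{\epsilon}$ and $J^{\epsilon}$, I would split $E[|J_{h}|^{q}]$ over $\{N^{\epsilon}_{h}=0\}$, $\{N^{\epsilon}_{h}=1\}$, $\{N^{\epsilon}_{h}\ge2\}$. On the first event $J_{h}=\bar{J}^{\epsilon}_{h}-hc_{\epsilon}$ and $P(N^{\epsilon}_{h}=0)\to1$, so that term has $\limsup$ at most $C\sigma_{\epsilon}^{2}$; on the second, $J_{h}=\bar{J}^{\epsilon}_{h}-hc_{\epsilon}+Y$ with $Y$ independent of $\bar{J}^{\epsilon}_{h}$ and distributed as $\lambda_{\epsilon}^{-1}\nu_{0}(\cdot\cap\{|z|>\epsilon\})$, so the $L^{q}$-convergence of $\bar{J}^{\epsilon}_{h}-hc_{\epsilon}$ and $h^{-1}P(N^{\epsilon}_{h}=1)\to\lambda_{\epsilon}$ give $h^{-1}E[|J_{h}|^{q}\mathbf{1}_{\{N^{\epsilon}_{h}=1\}}]\to\lambda_{\epsilon}E[|Y|^{q}]=\int_{|z|>\epsilon}|z|^{q}\nu_{0}(dz)$; on the third, $P(N^{\epsilon}_{h}\ge2)=O(h^{2})$ combined with $E[|J_{h}|^{2q}]\to0$ and Cauchy--Schwarz gives $h^{-1}E[|J_{h}|^{q}\mathbf{1}_{\{N^{\epsilon}_{h}\ge2\}}]\to0$. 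Collecting the three pieces yields $\limsup_{h\to0}\bigl|h^{-1}E[|J_{h}|^{q}]-\int_{|z|>\epsilon}|z|^{q}\nu_{0}(dz)\bigr|\le C\sigma_{\epsilon}^{2}$, and the same bound for the $\liminf$; letting $\epsilon\downarrow0$, with $\sigma_{\epsilon}^{2}\to0$ by dominated convergence and $\int_{|z|>\epsilon}|z|^{q}\nu_{0}(dz)\to\int|z|^{q}\nu_{0}(dz)$ by monotone convergence, completes the proof.

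The step I expect to be the crux is the uniform-in-$h$ control of the small-jump part, i.e.\ the bound $E[|\bar{J}^{\epsilon}_{h}|^{q}]\lesssim h\sigma_{\epsilon}^{2}$ for small $h$ with $\sigma_{\epsilon}^{2}\to0$, since it is precisely this that licenses the interchange of the limits $h\to0$ and $\epsilon\downarrow0$; the negligibility of the $\{N^{\epsilon}_{h}\ge2\}$ term hinges on the cheap but indispensable estimate $E[|J_{h}|^{2q}]\to0$, which is where the finiteness of all moments of $J_{1}$ and the absence of a Gaussian component enter (for $q=2$ the latter is essential, the identity $h^{-1}E[J_{h}^{2}]=1=\int z^{2}\nu_{0}(dz)$ being exact). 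For even integer $q$ one may bypass the truncation altogether: $E[J_{h}^{q}]$ is a polynomial in $h$ with vanishing constant term whose linear coefficient equals $\mathrm{cum}_{q}(J_{1})=\int z^{q}\nu_{0}(dz)$, so $h^{-1}E[|J_{h}|^{q}]=h^{-1}E[J_{h}^{q}]\to\int z^{q}\nu_{0}(dz)$ at once, which is in essence the content of \cite[Theorem 1]{Lopez2008} in the general case.
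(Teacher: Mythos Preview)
Your argument is correct, but it takes a genuinely different route from the paper's proof. The paper simply observes that under Assumption \ref{Moments} the function $\varphi(z)=|z|^{q}$ meets the hypotheses of \cite[Theorem~1]{Lopez2008} (Figueroa-L\'opez's small-time moment asymptotics) and cites that result. You instead give a self-contained proof via the L\'evy--It\^o truncation at level $\epsilon$, conditioning on the number of big jumps, a Rosenthal-type bound for the compensated small-jump martingale, and a final $\epsilon\downarrow0$ interchange; this is in fact essentially the mechanism behind the cited theorem, specialized to $\varphi(z)=|z|^{q}$. What your approach buys is transparency: one sees exactly where the absence of a Gaussian part and the finiteness of $\int|z|^{p}\nu_{0}(dz)$ for all $p\ge2$ enter, and the shortcut for even integer $q$ via moment--cumulant relations is a nice bonus. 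What the paper's approach buys is brevity and generality: the same citation covers all the other small-time limits $\tfrac{1}{h}E[\psi(J_{h})]\to\nu_{0}(\psi)$ needed later in the paper (for $\psi=\varphi_{k}\varphi_{l}$, $z\varphi$, $z^{2}\varphi$, $\zeta$, etc.), without repeating the truncation argument each time.
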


\begin{Rem}
Although the above convergence might not be valid for all $0<q<2$, it holds when $q\geq\beta$, where $\beta$ denotes the Blumenthal-Getoor index (for details, see \cite[Theorem 1]{Lopez2008}, \cite[Section 5.2]{Jac07}, and \cite[Theorem 1]{LusPag08}).
\end{Rem}

From now on we simply write $f_{j-1}(\theta)=f(X_{t_{j-1}},\theta)$, $f_{j-1}=f(X_{t_{j-1}},\theta_{0})$ 
and $\hat{f}_{j-1}=f(X_{t_{j-1}},\hat{\theta}_{n})$.

\begin{Lem}\label{MF}
Let $f:\mathbb{R}\times\Theta_\alpha\times\Theta_\gamma\mapsto\mathbb{R}$ be a polynomial growth function with respect to $x$, uniformly in $\alpha$ and $\gamma$. 
If Assumptions \ref{Sampling design}-\ref{Stability} are satisfied, then, 
for all $p\in\left\{1,2\right\}$ and $q\geq0$ it follows that
\begin{equation}\nn
\sup_n \sup_\theta E\left[\left|\frac{1}{nh}\sum_jf_{j-1}(\theta)(\Delta_jX -ha_{j-1}(\alpha))^p\right|^q\right]<\infty.
\end{equation}

Moreover, we have 
\begin{align*}
&\sup_nE\left[\left|\frac{1}{\sqrt{nh}}\sum_jf_{j-1}(\Delta_jX-ha_{j-1})\right|^q\right]<\infty,\\
&\sup_nE\left[\left|\frac{1}{\sqrt{nh}}\sum_j\left\{f_{j-1}(\Delta_jX-ha_{j-1})^2-hf_{j-1} c^2_{j-1}\right\}\right|^q\right]<\infty.
\end{align*}
\end{Lem}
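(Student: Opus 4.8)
The plan is to decompose the increment $\Delta_j X - h a_{j-1}(\al)$ into a drift-remainder part and a stochastic-integral part, and then to handle each resulting sum by a combination of Burkholder–Davis–Gundy (BDG) / Rosenthal-type inequalities, the moment bounds of Assumption \ref{Stability}(2), and the moment-convergence input of Lemma \ref{MC}. Writing $\Delta_j X = \int_j a_s\,ds + \int_j c_{s-}\,dJ_s$, we have
\begin{equation}\nn
\Delta_j X - h a_{j-1}(\al) = \int_j (a_s - a_{j-1}(\al))\,ds + \int_j c_{s-}\,dJ_s =: R_j(\al) + \xi_j.
\end{equation}
For the first assertion with $p=1$, I would split $\frac{1}{nh}\sum_j f_{j-1}(\theta)(R_j(\al)+\xi_j)$ accordingly. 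The $\xi_j$-part is, after centering, a martingale-difference sum; by the conditional Jensen/Minkowski inequality it suffices to bound $\frac{1}{nh}\sum_j |f_{j-1}(\theta)|\, E^{j-1}[|\xi_j|]$ plus a martingale remainder controlled by BDG, and $E^{j-1}[|\xi_j|^{q'}]\lesssim h$ for $q'\ge 2$ by Lemma \ref{MC} applied to $c_{s-}\,dJ_s$ together with the polynomial bound on $c$ and Assumption \ref{Stability}(2) (here one also uses that $E^{j-1}[\xi_j]=0$ since $E[J_1]=0$). The $R_j(\al)$-part is pathwise $O(h \cdot \sup_{s\in[t_{j-1},t_j]}|a_s-a_{j-1}(\al)|)$, and by the Lipschitz/polynomial-growth bounds on $a$ and standard moment estimates for the SDE on a short interval this contributes $O_p(h)$ uniformly, hence is negligible after division by $nh$. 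The uniformity in $\theta$ comes from \eqref{polynomial} together with the compactness of $\bar\Theta$: all the growth constants $C_{(i,k)}$ are finite, so every bound is in terms of $\sup_\theta|f(X,\theta)|$ and polynomials in $\sup_t|X_t|$, whose moments are finite by Assumption \ref{Stability}(2). The case $p=2$ is analogous: one expands $(R_j+\xi_j)^2 = R_j^2 + 2R_j\xi_j + \xi_j^2$, notes $E^{j-1}[\xi_j^2]\approx h c_{j-1}^2$, and the compensated sum $\frac{1}{nh}\sum_j f_{j-1}(\theta)(\xi_j^2 - E^{j-1}[\xi_j^2])$ is again a martingale-difference sum handled by BDG, using $E^{j-1}[|\xi_j|^{2q'}]\lesssim h$; the cross term and $R_j^2$ are lower order by Cauchy–Schwarz.

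For the two displayed $\sqrt{nh}$-normalized statements, I would use the same decomposition but now it is essential that the leading sums are genuine martingale-difference arrays: for the first, $\frac{1}{\sqrt{nh}}\sum_j f_{j-1}(\xi_j + R_j)$, the $\xi_j$-part has conditional mean zero, so BDG (or a Rosenthal-type inequality for the $q$th moment, $q\ge 2$) gives a bound in terms of $\big(\frac{1}{nh}\sum_j E^{j-1}[f_{j-1}^2\xi_j^2]\big)^{q/2} + \frac{1}{(nh)^{q/2}}\sum_j E^{j-1}[|f_{j-1}|^q|\xi_j|^q]$, and since $E^{j-1}[\xi_j^2]\lesssim h$ and $E^{j-1}[|\xi_j|^q]\lesssim h$ (all $q\ge 2$, by Lemma \ref{MC}), the first term is $O(1)$ and the second is $O((nh)^{1-q/2})\to 0$ under Assumption \ref{Sampling design}; the $R_j$-part contributes $\frac{1}{\sqrt{nh}}\sum_j f_{j-1} R_j = O_p(\sqrt{nh}\cdot h) = O_p(\sqrt{nh^3})\to 0$ by $nh^2\to 0$. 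The second displayed statement is handled by writing $f_{j-1}(\Delta_j X - h a_{j-1})^2 - h f_{j-1}c_{j-1}^2 = f_{j-1}(\xi_j^2 - E^{j-1}[\xi_j^2]) + f_{j-1}(E^{j-1}[\xi_j^2] - h c_{j-1}^2) + f_{j-1}(2R_j\xi_j + R_j^2)$; the first piece is a martingale difference with $E^{j-1}[|\xi_j^2 - E^{j-1}[\xi_j^2]|^q]\lesssim h$, so BDG gives $O(1)$; the deterministic-bias piece $E^{j-1}[\xi_j^2] - h c_{j-1}^2$ is $o(h)$ uniformly by the continuity of $s\mapsto c_s$ and $E[J_h^2]=h$, so its sum, normalized by $\sqrt{nh}$, is $o(\sqrt{nh}\cdot h/\sqrt h\cdot\sqrt{n}) = o(\sqrt{nh^2}\cdot\text{something})$ — more carefully, it is $\sqrt{nh}\cdot o(h) \cdot n / (nh)$ type and tends to $0$; and the $R_j\xi_j$, $R_j^2$ pieces are lower order as before.

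The main obstacle I anticipate is making the moment estimates for $\xi_j = \int_j c_{s-}\,dJ_s$ rigorous and uniform — specifically, proving $E^{j-1}[|\xi_j|^q]\lesssim h$ for every $q\ge 2$ with a constant independent of $j$ and $n$. The naive BDG bound for a jump integral produces a sum of $\int_j E^{j-1}[|c_{s-}|^k]\,ds$-type terms of various orders $k\le q$ (from the $L^{q/2}$-norm of the quadratic variation plus the jump contributions), and one must check that after using Lemma \ref{MC} (equivalently $\frac1h E[|J_h|^q]\to\nu_0(|z|^q)<\infty$) and freezing $c_{s-}\approx c_{j-1}$ up to an $O(\sqrt h)$ error controlled by Assumption \ref{Stability}(2), the dominant contribution is exactly the linear-in-$h$ one and the cross terms do not spoil uniformity. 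This is where one leans most heavily on Assumption \ref{Moments} (all moments of $J_1$) and on the fact that the localized maximal process $\sup_{s\in[t_{j-1},t_j]}(1+|X_s|)^{C}$ has all moments bounded uniformly in $j,n$; once that estimate is in hand, the rest is routine assembly via BDG and Cauchy–Schwarz.
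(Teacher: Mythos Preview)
Your plan is correct and follows essentially the same route as the paper: decompose $\Delta_j X - h a_{j-1}(\alpha)$ into a drift-remainder plus the stochastic integral $\int_j c_{s-}\,dJ_s$, then control the martingale pieces by Burkholder-type inequalities and the rest by the moment bounds of Assumption~\ref{Stability}. The paper's organization is slightly finer---it splits your $R_j(\alpha)$ further into $\int_j(a_s - E^{j-1}[a_s])\,ds$ (a martingale difference), $\int_j(E^{j-1}[a_s]-a_{j-1})\,ds$ (handled via the generator $\tilde{\mathcal A}a$), and the parameter-mismatch term $h(a_{j-1}-a_{j-1}(\alpha))$, and for $p=2$ it expands $\bigl(\int_j c_{s-}\,dJ_s\bigr)^2$ by It\^o's formula rather than directly invoking $E^{j-1}[\xi_j^2]\approx h c_{j-1}^2$---but the substance is the same.

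One caution: at general $\theta\neq\theta_0$, the piece $h(a_{j-1}-a_{j-1}(\alpha))$ inside $R_j(\alpha)$ is of exact order $h$ with an $O(1)$ coefficient, so $\frac{1}{nh}\sum_j f_{j-1}(\theta)R_j(\alpha)$ is $O_p(1)$, not $o_p(1)$. Since the first assertion only asks for boundedness this does not harm your argument, but your phrase ``negligible after division by $nh$'' is inaccurate at general $\theta$ and should be replaced by ``bounded''. (At $\theta=\theta_0$, which is all that is needed for the two $\sqrt{nh}$-normalized statements, the mismatch vanishes and your reasoning is fine as written.)
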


\begin{proof}
First, we show the case of $p=1$ and $q\geq 2$.
By the definition of $X$, we have
\begin{align*}
&E\left[\left|\frac{1}{nh}\sum_jf_{j-1}(\theta)(\Delta_jX -ha_{j-1}(\alpha))\right|^q\right]\\
&\lesssim E\left[\left|\frac{1}{nh}\sum_jf_{j-1}(\theta) \int_j(a_s-E^{j-1}[a_s])ds\right|^q\right] +E\left[\left|\frac{1}{nh}\sum_jf_{j-1}(\theta) \int_j(E^{j-1}[a_s]-a_{j-1})ds\right|^q\right] \\
&+E\left[\left|\frac{1}{n}\sum_jf_{j-1}(\theta)(a_{j-1}-a_{j-1}(\alpha))\right|^q\right]+ E\left[\left|\frac{1}{nh}\sum_jf_{j-1}(\theta) \int_j c_{s-}dJ_s \right|^q\right].\\
\end{align*}
We will check separately that all terms are finite. 
From the assumption on $f$ and Jensen's inequality, we get
\begin{equation}\nn
E\left[\left|\frac{1}{n}\sum_jf_{j-1}(\theta)(a_{j-1}-a_{j-1}(\alpha))\right|^q\right] \leq \frac{1}{n}\sum_jE\left[\left|f_{j-1}(\theta)(a_{j-1}-a_{j-1}(\alpha))\right|^q\right]<\infty.
\end{equation}
By It\^{o}'s formula, we have 
\begin{equation}\nn
E^{j-1}[a_s]-a_{j-1}=\int_{t_{j-1}}^s E^{j-1}[\tilde{\mathcal{A}} a_u] du,
\end{equation}
where $\tilde{\mathcal{A}}$ denotes the formal infinitesimal generator of $X$, namely, for $f\in\mathcal{C}^1(\mathbb{R})$,
\begin{equation}\nn
\tilde{\mathcal{A}}f(x)=\partial f(x) a(x)+ \int (f(x+c(x)z)-f(x)-\partial f(x)c(x)z) \nu_0(dz).
\end{equation} 
By \cite[Lemma 4.5]{Masuda2013}, the definition of $\tilde{\mathcal{A}}$ 
and the assumptions about coefficients and moments, for a $v\in(0,1)$, we get
\begin{align*}
\left|E^{j-1}[\tilde{\mathcal{A}} a_u]\right| &\leq E^{j-1}\left[\left|(\partial_x a_u) a_u+ \int (a(X_u+c_u z)-a_u-(\partial_x a_u) c_u z) \nu_0(dz) \right|\right]\\
&\lesssim E^{j-1}\left[1+|X_u|^C+\int |\partial^2_x a(X_u+v c_u z) (c_uz)^2| \nu_0(dz)\right]\\
&\lesssim E^{j-1}\left[1+|X_u|^C\right] \lesssim E^{j-1}\left[1+|X_u-X_{j-1}|^C+|X_{j-1}|^C\right] \lesssim 1+|X_{j-1}|^C.
\end{align*}
Note that we used the fact that $\int z^q \nu_0(dz)<\infty$ for any $q\geq2$. 
Hence it follows that
\begin{align*}
E\left[\left|\frac{1}{nh}\sum_jf_{j-1}(\theta) \int_j(E^{j-1}[a_s]-a_{j-1})ds\right|^q\right] &\leq E\left[\left|\frac{1}{nh}\sum_j|f_{j-1}(\theta)| \left|\int_j(E^{j-1}[a_s]-a_{j-1})ds\right|\right|^q\right]\\ 
& 
\leq E\left[\left|\frac{1}{nh}\sum_j|f_{j-1}(\theta)| \int_j\int_{t_{j-1}}^s
\left| E^{j-1}[\tilde{\mathcal{A}} a_u] \right|duds\right|^q\right]
\\
&\lesssim E\left[\left|\frac{h}{n}\sum_j|f_{j-1}(\theta)|(1+|X_{j-1}|^C)\right|^q\right]  \lesssim  h^q <\infty.
\end{align*}
Burkholder's inequality for martingale difference array yields that
\begin{align*}
E\left[\left|\frac{1}{nh}\sum_jf_{j-1}(\theta) \int_j(a_s-E^{j-1}[a_s])ds\right|^q\right] &\lesssim n^{-\frac{q}{2}-1} \sum_jE\left[\left|f_{j-1}(\theta) \frac{\int_j(a_s-E^{j-1}[a_s])ds}{h}\right|^q\right]\\
&\lesssim n^{-\frac{q}{2}-1} \sum_j\sqrt{E\left[\left|\frac{\int_j(a_s-E^{j-1}[a_s])ds}{h}\right|^{2q}\right]}\\
&\leq n^{-\frac{q}{2}-1}\sum_j\sqrt{\frac{1}{h} E\left[\int_j |a_s-E^{j-1}[a_s]|^{2q} ds\right]}\\
&\lesssim n^{-\frac{q}{2}-1}\sum_j\sqrt{\frac{1}{h} \int_j E\left[|a_s-a_{j-1}|^{2q}+|a_{j-1}-E^{j-1}[a_s]|^{2q}\right] ds}\\
&\lesssim n^{-\frac{q}{2}} \sqrt{h}<\infty.
\end{align*}
Define the indicator function $\chi_j$ by
\begin{equation}\nn
\chi_j(s)=\begin{cases}
1&s\in(t_{j-1},t_j],\\
0&\text{otherwise}. \end{cases}
\end{equation}
Using this indicator function and Burkholder's inequality, we can obtain
\begin{align}
E\left[\left|\frac{1}{nh}\sum_jf_{j-1}(\theta) \int_j c_{s-}dJ_s \right|^q\right] &=E\left[\left|\frac{1}{nh}\int^{nh}_0\sum_jf_{j-1}(\theta) c_{s-}\chi_j(s)dJ_s \right|^q\right] \nn \\
&\lesssim(nh)^{-\frac{q}{2}-1} \int^{nh}_0 E\left[\left|\sum_jf_{j-1}(\theta) c_{s}\chi_j(s)\right|^q\right] ds\nn\\
&=(nh)^{-\frac{q}{2}-1} \sum_j\int_j E\left[\left| f_{j-1}(\theta) c_{s}\right|^q\right] ds \nn\\
&\lesssim(nh)^{-\frac{q}{2}}<\infty.
\label{hm.eq1}
\end{align}
Second, we look at the cases of $p=2$ and $q\geq 2$. Quite similarly to the above, we have 
\begin{align*}
&E\left[\left|\frac{1}{nh}\sum_jf_{j-1}(\theta)(\Delta_jX -ha_{j-1}(\alpha))^2\right|^q\right]\\
&\lesssim E\left[\left|\frac{1}{nh}\sum_jf_{j-1}(\theta) \left(\int_j(a_s-E^{j-1}[a_s])ds\right)^2\right|^q\right] +E\left[\left|\frac{1}{nh}\sum_jf_{j-1}(\theta) \left(\int_j(E^{j-1}[a_s]-a_{j-1})ds\right)^2\right|^q\right] \\
&+E\left[\left|\frac{1}{n}\sum_jf_{j-1}(\theta)(a_{j-1}-a_{j-1}(\alpha))^2\right|^q\right]+ E\left[\left|\frac{1}{nh}\sum_jf_{j-1}(\theta)\left( \int_j c_{s-}dJ_s \right)^2 \right|^q\right].\\
\end{align*}
In the same way, we get
\begin{align*}
&E\left[\left|\frac{1}{nh}\sum_jf_{j-1}(\theta) \left(\int_j(E^{j-1}[a_s]-a_{j-1})ds\right)^2\right|^q\right] \lesssim h^{3q} <\infty,\\
&E\left[\left|\frac{1}{n}\sum_jf_{j-1}(\theta)(a_{j-1}-a_{j-1}(\alpha))^2\right|^q\right] <\infty.
\end{align*}
Jensen's inequality implies that
\begin{align*}
E\left[\left|\frac{1}{nh}\sum_jf_{j-1}(\theta) \left(\int_j(a_s-E^{j-1}[a_s])ds\right)^2\right|^q\right] 
&\leq E\left[\left|\frac{h}{n}\sum_j|f_{j-1}(\theta)| \left(\frac{\int_j(a_s-E^{j-1}[a_s])ds}{h}\right)^2\right|^q\right]\\
&\leq E\left[\left|\frac{1}{n}\sum_j|f_{j-1}(\theta)| \int_j|a_s-E^{j-1}[a_s]|^2ds\right|^q\right]\\
&=E\left[\left|\frac{1}{n}\int^{nh}_0 \sum_j|f_{j-1}(\theta)| |a_s-E^{j-1}[a_s]|^2 \chi_j(s) ds\right|^q\right]\\
&\leq h^q \frac{1}{nh} E \left[\int^{nh}_0 \left|\sum_j|f_{j-1}(\theta)| |a_s-E^{j-1}[a_s]|^2 \chi_j(s)\right|^q ds\right]\\
&=\frac{h^{q-1}}{n} \sum_jE\left[\int_j |f_{j-1}(\theta)|^q |a_s-E^{j-1}[a_s]|^{2q}ds\right]\\
&\lesssim h^q \sqrt{h} < \infty.
\end{align*}
From It\^{o}'s formula, we get 
\begin{align*}
\left( \int_j c_{s-}dJ_s \right)^2&=2 \int_j \left(\int_{t_{j-1}}^s c_{u-} dJ_u\right) c_{s-} dJ_s+\int_j \int c_{s-}^2 z^2 N(ds,dz)\\
&=2 \int_j \left(\int_{t_{j-1}}^s c_{u-} dJ_u\right) c_{s-} dJ_s+\int_j \int c_{s-}^2 z^2 \tilde{N}(ds,dz)+\int_j c_{s}^2 ds\\
&=2 \int_j \left(\int_{t_{j-1}}^s c_{u-} dJ_u\right) c_{s-} dJ_s+\int_j \int c_{s-}^2 z^2 \tilde{N}(ds,dz)+\int_j (c_{s}^2-E^{j-1}\left[c_{s}^2\right])ds\\
&+\int_j (E^{j-1}\left[c_{s}^2\right]-c^2_{j-1})ds+h c^2_{j-1},
\end{align*}
where $N(ds,dz)$ (resp. $\tilde{N}(ds,dz)$) is the Poisson random measure 
(resp. compensated Poisson random measure) associated with $J$.
It follows from this decomposition together with a similar estimate to \eqref{hm.eq1} that
\begin{equation}\nn
E\left[\left|\frac{1}{nh}\sum_jf_{j-1}(\theta)\left( \int_j c_{s-}dJ_s \right)^2 \right|^q\right]<\infty.
\end{equation}
If $\theta=\theta_0$, we do not have to consider the term containing $a_{j-1}-a_{j-1}(\alpha)$.
Hence Jensen's inequality gives the desired result for all $q\geq0$.
\end{proof}

For the sake of the asymptotic normality of $u_n$, we introduce the function space:
\begin{align*}
\mathcal{K}_1
&=\Biggl\{f=(f_{k}):\mbbr\to\mbbr^{q}\, \Bigg|\, \text{$f$ is of class $C^{2}$}, \quad \nu_0(f)<\infty, \quad
\sup_{0\le s\le h_{n}}E_0\left[|\mathcal{A}^2 f(J_s)|\right]=O(1),
\nonumber \\
&{}\qquad \text{and}\quad
\max_{i=0,1}\int_0^{h_n}\int E_0\left[|\mathcal{A}^i f(J_{s-}+z)-\mathcal{A}^i f(J_{s-})|^2\right]\nu_0(dz) ds=O(1)\Biggr\}.
\nonumber\\
\end{align*}

\begin{Lem}\label{Rate of gap}
If Assumption \ref{Sampling design} holds and if the function $f:\mbbr\to\mbbr^q$ fulfills that 
$f(0)=\p f(0)=0$ and $f\in\mathcal{K}_{1}$, then we have
\begin{equation}\nn
\frac{1}{h}E[f(J_h)]-\nu_0(f)=O(h).
\end{equation}
\end{Lem}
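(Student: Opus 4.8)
The plan is to apply Dynkin's formula twice, using the boundary conditions $f(0)=\partial f(0)=0$ to pin down the leading term as $\nu_0(f)$. Since $E[J_1]=0$ and $\int_{|z|>1}|z|\nu_0(dz)<\infty$ under Assumption \ref{Moments}, the pure-jump L\'evy process $J$ admits the martingale representation $J_t=\int_0^t\int z\,\tilde N(ds,dz)$ with $\tilde N$ the compensated Poisson random measure, so that Itô's formula for $f\in C^2$ gives
\begin{equation}\nn
f(J_t)=f(0)+\int_0^t\int\bigl(f(J_{s-}+z)-f(J_{s-})\bigr)\tilde N(ds,dz)+\int_0^t\mathcal{A}f(J_s)\,ds.
\end{equation}
The $L^2$-integrability requirement for $i=0$ in the definition of $\mathcal{K}_1$ makes the stochastic integral a genuine martingale, so taking expectations and using $f(0)=0$ yields $E[f(J_h)]=\int_0^h E[\mathcal{A}f(J_s)]\,ds$.

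Next I would observe that the boundary conditions give $\mathcal{A}f(0)=\int\bigl(f(z)-f(0)-\partial f(0)z\bigr)\nu_0(dz)=\nu_0(f)$, which is finite because $f\in\mathcal{K}_1$. Repeating the Itô/martingale argument with $\mathcal{A}f$ in place of $f$ — now invoking the $i=1$ $L^2$-condition in $\mathcal{K}_1$ to discard the martingale part — gives $E[\mathcal{A}f(J_s)]=\mathcal{A}f(0)+\int_0^s E[\mathcal{A}^2f(J_u)]\,du=\nu_0(f)+\int_0^s E[\mathcal{A}^2f(J_u)]\,du$. Substituting this into the previous display and dividing by $h$,
\begin{equation}\nn
\frac{1}{h}E[f(J_h)]-\nu_0(f)=\frac{1}{h}\int_0^h\int_0^s E[\mathcal{A}^2f(J_u)]\,du\,ds,
\end{equation}
and one bounds the right-hand side in absolute value by $\frac{1}{h}\int_0^h s\,ds\cdot\sup_{0\le u\le h_n}E[|\mathcal{A}^2f(J_u)|]=\frac{h}{2}\,O(1)=O(h)$, which completes the proof.

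The main obstacle is the rigorous justification of the two applications of Dynkin's formula: first, that the compensated-Poisson stochastic integrals appearing in Itô's formula are true martingales and not merely local ones — this is exactly what the two $L^2$-integrability conditions ($i=0,1$) in the definition of $\mathcal{K}_1$ are designed to guarantee — and second, the regularity needed to apply Itô's formula to $\mathcal{A}f$ a second time, i.e. that $\mathcal{A}f$ is again $C^2$ and $\mathcal{A}^2f$ is a well-defined integrable functional of $J_u$, which is built into the hypothesis $f\in\mathcal{K}_1$. A minor additional point worth stating is that $h=h_n\to0$, so the "$O(1)$" appearing in the definition of $\mathcal{K}_1$ is to be read as boundedness in $n$, and the bound $\tfrac{h}{2}\,O(1)$ is indeed $O(h)$ as $n\to\infty$.
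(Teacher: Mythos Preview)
Your proof is correct and follows essentially the same route as the paper: the paper writes out the second-order It\^{o}--Taylor expansion of $f(J_h)$ in one line, observes that the two compensated-Poisson integrals are true martingales (via the $L^2$-conditions in $\mathcal{K}_1$), and reads off $\mathcal{A}f(0)=\nu_0(f)$ from $f(0)=\partial f(0)=0$. Your version merely unpacks the same expansion into two successive applications of Dynkin's formula, and your discussion of the obstacles is more explicit than the paper's about where each hypothesis of $\mathcal{K}_1$ is used.
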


\begin{proof}
By It\^{o}-Taylor expansion, we see that
\begin{align*}
f(J_h)&=f(0)+h\mathcal{A}f(0)+\int_0^h\int_0^s\mathcal{A}^2 f(J_u)duds\\
&+\int_0^h\int\left\{f(J_{s-}+z)-f(J_{s-})\right\}\tilde{N}(ds,dz) 
+\int_0^h \int_0^s\int\left\{\mathcal{A}f(J_{u-}+z)-\mathcal{A}f(J_{u-})\right\}\tilde{N}(du,dz)ds,
\end{align*}
Under the assumptions we see that the last two terms are martingale (see \cite[Theorem 4.2.3]{Applebaum2009}) 
and $\mathcal{A}f(0)=\nu_0(f)$, hence the result follows. 
\end{proof}

\begin{Lem}\label{AN}
Suppose that Assumption \ref{Sampling design}, Assumption \ref{Moments} and Assumption \ref{Moment fitting function} hold.
Then we have
\begin{equation}\nn
u_n \overset{\mathcal{L}} \longrightarrow \mathcal{N}_q (0,\Sigma_{11}),
\end{equation}
\end{Lem}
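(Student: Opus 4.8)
The plan is to reduce the assertion to the Lyapunov central limit theorem for the row-wise i.i.d.\ triangular array $\{\varphi(\Delta_jJ)\}_{j=1}^{n}$, after peeling off a deterministic remainder that the sampling design renders negligible. Since $J$ has stationary independent increments, $\Delta_1J,\dots,\Delta_nJ$ are i.i.d.\ copies of $J_{h}$, and $E[|\varphi(J_{h})|]<\infty$ because $\varphi$ has polynomial growth and $\nu_0$ possesses moments of all orders. I would therefore write
\begin{align*}
u_n=\frac{1}{\sqrt{nh}}\sum_{j}\bigl(\varphi(\Delta_jJ)-E[\varphi(J_{h})]\bigr)
+\sqrt{nh}\Bigl(\frac{1}{h}E[\varphi(J_{h})]-\nu_0(\varphi)\Bigr)=:\xi_n+r_n.
\end{align*}

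First I would dispose of $r_n$. Note that $\varphi\in\mathcal{K}$ forces $\varphi(0)=\p\varphi(0)=0$, since $|\p\varphi(z)|=O(|z|)$ near the origin and $|\varphi(z)|=O(|z|^{\rho})$ with $\rho>1-\epsilon_0>0$; moreover the polynomial-growth bounds on $\p^i\varphi$ ($0\le i\le 5$) together with Assumption \ref{Moments} place $\varphi$ in $\mathcal{K}_1$. Hence Lemma \ref{Rate of gap} gives $h^{-1}E[\varphi(J_{h})]-\nu_0(\varphi)=O(h)$, so that $r_n=\sqrt{nh}\cdot O(h)=O\bigl((nh^2\cdot h)^{1/2}\bigr)=o(1)$ by Assumption \ref{Sampling design}.

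Next I would apply the CLT to $\xi_n=\sum_j\xi_{n,j}$ with $\xi_{n,j}:=(nh)^{-1/2}(\varphi(\Delta_jJ)-E[\varphi(J_{h})])$, which for each fixed $n$ are i.i.d.\ and centred. For the covariances,
\begin{align*}
\sum_j\cov(\xi_{n,j})=\frac{1}{h}E[\varphi^{\otimes2}(J_{h})]-h\Bigl(\frac{1}{h}E[\varphi(J_{h})]\Bigr)^{\otimes2},
\end{align*}
where the second term is $O(h)$ by the previous step, while each entry of $\varphi^{\otimes2}$ has polynomial growth and vanishes near $0$ faster than the Blumenthal--Getoor index allows (it is $O(|z|^{2\rho})$ with $2\rho>\beta$), so Lemma \ref{MC} applied componentwise (i.e.\ \cite[Theorem 1]{Lopez2008}) yields $h^{-1}E[\varphi^{\otimes2}(J_{h})]\to\nu_0(\varphi^{\otimes2})=\Sigma_{11}$; thus $\sum_j\cov(\xi_{n,j})\to\Sigma_{11}$. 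For the Lyapunov condition, fix $\delta\in(0,1]$; the function $z\mapsto|\varphi(z)|^{2+\delta}$ has polynomial growth and is $O(|z|^{(2+\delta)\rho})$ near $0$ with $(2+\delta)\rho>\beta$, so the same convergence result gives $h^{-1}E[|\varphi(J_{h})|^{2+\delta}]=O(1)$, and since $E[\varphi(J_h)]=O(h)$ we obtain
\begin{align*}
\sum_j E\bigl[|\xi_{n,j}|^{2+\delta}\bigr]=\frac{n}{(nh)^{1+\delta/2}}\,E\bigl[|\varphi(J_{h})-E[\varphi(J_{h})]|^{2+\delta}\bigr]\lesssim\frac{1}{(nh)^{\delta/2}}\longrightarrow0,
\end{align*}
using $nh\to\infty$ (a consequence of Assumption \ref{Sampling design}). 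Applying the one-dimensional Lyapunov CLT to $\langle\lambda,\xi_n\rangle$ for arbitrary $\lambda\in\mbbr^{q}$ and invoking the Cram\'er--Wold device gives $\xi_n\cil\mathcal{N}_q(0,\Sigma_{11})$; together with $r_n=o(1)$ and Slutsky's lemma this proves the lemma.

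The step I expect to be the main obstacle is the verification, directly from the definition of $\mathcal{K}$, that the auxiliary functions $\varphi^{\otimes2}$ and $|\varphi|^{2+\delta}$ satisfy the hypotheses under which $h^{-1}E[g(J_{h})]\to\nu_0(g)$ holds — in particular controlling their behaviour near the origin relative to $\beta$ so that the limiting integrals are finite — together with the check that $\varphi\in\mathcal{K}_1$, which is what legitimises the appeal to Lemma \ref{Rate of gap} for the bias term $r_n$; by contrast the CLT machinery itself is entirely routine.
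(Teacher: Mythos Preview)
Your argument is correct and follows essentially the same route as the paper: the identical decomposition $u_n=\xi_n+r_n$, the same use of Lemma~\ref{Rate of gap} (after checking $\varphi\in\mathcal{K}_1$) to kill the bias $r_n$ under $nh^2\to0$, and a CLT for the centred triangular array with the small-time moment asymptotics of \cite{Lopez2008} supplying both the covariance limit and the Lindeberg/Lyapunov bound. The only cosmetic differences are that the paper invokes the martingale CLT and verifies $\varphi_k\varphi_l\in\mathcal{K}_1$ to obtain the covariance limit via Lemma~\ref{Rate of gap}, whereas you exploit the i.i.d.\ structure directly with the Lyapunov CLT and appeal to \cite[Theorem~1]{Lopez2008} for $\varphi^{\otimes2}$; both are equivalent here.
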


\begin{proof}
By the stationarity and independence of increments of L\'{e}vy process $J$, we have
\begin{align*}
u_n &=\sqrt{nh}\left\{\frac{1}{nh}\sum_j\left(\varphi(\Delta_jJ) -E^{j-1}[\varphi(\Delta_jJ)]\right)\right\}+  \sqrt{nh}\left\{\frac{1}{nh}\sum_jE^{j-1}[\varphi(\Delta_jJ)]-\nu_0(\varphi)\right\}\\
&=: e_n+f_n,
\end{align*}
where
\begin{align*}
e_n &=\sqrt{nh}\left\{\frac{1}{nh}\sum_j\left(\varphi(\Delta_jJ) -E[\varphi(J_h)]\right)\right\}=: \frac{1}{\sqrt{nh}} \sum_j e_{j},\\
f_n &=\sqrt{nh} \left(\frac{1}{h} E[\varphi(J_h)]-\nu_0(\varphi)\right).
\end{align*}
By the previous lemma, it is clear that $f_n=o(1)$ under $nh_{n}^{2}\to 0$.
We will prove that $e_n \overset{\mathcal{L}} \longrightarrow \mathcal{N}_q (0,\Sigma_{11})$ 
by applying the martingale central limit theorem \cite{Dvoretzky1972}. 
First, we show that $\varphi_k, \varphi_k\varphi_l \in\mathcal{K}_1$ where $\varphi_k$ denotes $k$th component of $\varphi$ (in the case of $q=1$).
We only prove $\varphi\in\mathcal{K}_1$; the other case is similar.
By the definition of $\mathcal{A}$ (see \eqref{IG}) and Taylor's theorem, for fixed $s\in(0,1)$, we have
\begin{equation}\nn
E\left[\left|\mathcal{A}^2 \varphi (J_s) \right|\right] \lesssim \sup_{u\in[0,1]} E\left[ \left|\int \partial^2 \mathcal{A} \varphi (J_s+uz)z^2 \nu_0(dz)\right| \right].
\end{equation}
Recall that by Assumption \ref{Moments} and the definition of L\'{e}vy measure, we have $\int |z|^q\nu_0(dz)<\infty$ for all $q\geq2$.
By means of  Assumption \ref{Moment fitting function} and dominated convergence theorem it follows that
\begin{align*}
|\partial^2 \mathcal{A} \varphi(x)| &=\left|\partial^2 \left(\int (\varphi(x+z)-\varphi(x)-\partial\varphi(x)z) \nu_0(dz)\right)\right| \lesssim 1+|x|^C,
\end{align*} 
for all $x\in\mathbb{R}$.
Hence we have 
\begin{equation*}
\sup_{0\leq s \leq h_n}E\left[|\mathcal{A}^2\varphi(J_s)|\right]<\infty.
\end{equation*}
Similarly, we can show that 
\begin{align*}
&\int_0^{h_n} \int E\left[|\varphi(J_{s-}+z)-\varphi(J_{s-})|^2\right] \nu_0(dz) ds<\infty,\\
&\int_0^{h_n} \int E\left[|\mathcal{A}\varphi(J_{s-}+z)-\mathcal{A}\varphi(J_{s-})|^2\right] \nu_0(dz) ds<\infty.
\end{align*}
Obviously, we have $E[e_j] =0$.
The properties of conditional expectation yield that
\begin{equation}\nn
E[e_{j_k}e_{j_l}]=E[\varphi_k(J_h)\varphi_l(J_h)]-E[\varphi_k(J_h)]E[\varphi_l(J_h)],
\end{equation}
Lemma \ref{Rate of gap} leads to $\frac{1}{nh} \sum_jE[e_{j_k}e_{j_l}] \longrightarrow \int \varphi_k(z)\varphi_l(z) \nu_0(dz)$.
From Assumption \ref{Moments}, Assumption \ref{Moment fitting function} and Lemma \ref{MC}, we obtain $E[|e_{j_k}|^4] = O(h)$.
Hence we have $\frac{1}{(nh)^2}\sum_jE[|e_j|^4]\to0$, namely Lindeberg condition holds.
Combining these discussion, we deduce that 
$e_n \overset{\mathcal{L}}\longrightarrow \mathcal{N}_q(0,\Sigma_{11})$ as was to be shown, completing the proof.
\end{proof}

Define $\tilde{G}_n(\theta)\in\mathbb{R}^p$ by 
\begin{equation}\nn
\tilde{G}_n(\theta)=(G_n^\alpha(\theta),G_n^\gamma(\theta)),
\end{equation}
and it is easy to calculate its derivative $\partial_\theta \tilde{G}_n(\theta)=\begin{pmatrix} \partial_\alpha G_n^\alpha(\theta)&\partial_\gamma G_n^\alpha(\theta)\\ \partial_\alpha G_n^\gamma(\theta)&\partial_\gamma G_n^\gamma(\theta) \end{pmatrix}\in\mathbb{R}_p\otimes\mathbb{R}_p$ as follows:
\begin{align*}
&\partial_\alpha G_n^\alpha(\theta)=\frac{1}{nh}\sum_j\left\{\frac{\partial_\alpha^{\otimes 2}a_{j-1}(\alpha)}{c^2_{j-1}(\gamma)}(\Delta_jX-h a_{j-1}(\alpha))-h\frac{(\partial_\alpha a_{j-1}(\alpha))^{\otimes 2}}{c^2_{j-1}(\gamma)}\right\},\\
&\partial_\gamma G_n^\alpha(\theta)=\frac{1}{nh}\sum_j\partial_\alpha a_{j-1}[\partial_{\gamma^T}c_{j-1}^{-2}(\gamma)](\Delta_jX-h a_{j-1}(\alpha)),\\
&\partial_\alpha G_n^\gamma(\theta)={\frac{2}{n}}\sum_j(\Delta_jX-h a_{j-1}(\alpha))[\partial_\gamma c_{j-1}^{-2}(\gamma)] \partial_{\alpha^T} a_{j-1}(\alpha), \\
&\partial_\gamma G_n^\gamma(\theta)=\frac{1}{nh}\sum_j\left\{[-\partial_\gamma^{\otimes 2}c_{j-1}^{-2}(\gamma)] (\Delta_jX-ha_{j-1}(\alpha))^2+2h\frac{(\partial_\gamma c_{j-1}(\gamma))^{\otimes 2}-c_{j-1}(\gamma)\partial_\gamma^{\otimes 2}c_{j-1}(\gamma)}{c_{j-1}^2(\gamma)}\right\}.
\end{align*}.

\begin{Lem}\label{AN2}
Under Assumptions \ref{Sampling design}-\ref{Identifiability}, it follows that
\begin{align*}
&\sup_{\theta\in\Theta}\left|\tilde{G}_n(\theta)-G_\infty(\theta)\right|\overset{P_0}\longrightarrow0,\\
&\sqrt{nh}\tilde{G}_n(\theta_0)\overset{\mathcal{L}}\longrightarrow\mathcal{N}_p(0,\Sigma_{22}).
\end{align*}
\end{Lem}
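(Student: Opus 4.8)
The plan is to treat the two displayed assertions separately, reducing the first to a uniform law of large numbers for the discretely sampled ergodic process and the second to a martingale central limit theorem for triangular arrays; both arguments follow \cite[proof of Theorem~2.7]{Masuda2013}. For the uniform convergence I would start from the decomposition $\D_jX-ha_{j-1}(\al)=\int_j(a_s-a_{j-1})ds+h(a_{j-1}-a_{j-1}(\al))+\int_j c_{s-}dJ_s$, which splits $G_n^\al(\theta)$ into the ergodic piece $\frac1n\sum_j M_{j-1}(\theta)(a_{j-1}-a_{j-1}(\al))$ plus two remainders, and analogously splits $G_n^\gam(\theta)$, after replacing $(\D_jX-ha_{j-1}(\al))^2$ by $c^2(X_{j-1},\gam_0)(\D_jJ)^2$, into $\frac1{nh}\sum_j\bigl\{[-\p_\gam c_{j-1}^{-2}(\gam)]c^2(X_{j-1},\gam_0)(\D_jJ)^2-h\,\p_\gam c_{j-1}^2(\gam)/c_{j-1}^2(\gam)\bigr\}$ plus remainders. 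All remainders are $o_p(1)$ by the estimates already used in the proof of Lemma~\ref{MF} (Burkholder's inequality for the martingale parts, together with the bound $|E^{j-1}[a_s]-a_{j-1}|\lesssim h(1+|X_{j-1}|^{C})$ for the drift-bias part) and $nh_n^2\to0$; in the $\gam$ case one also uses that $\frac1{nh}\sum_j g_{j-1}((\D_jJ)^2-h)=o_p(1)$, its variance being $O((nh)^{-1})$. The surviving pieces converge, for each fixed $\theta$ and in probability, to $G_\infty^\al(\theta)$ and $G_\infty^\gam(\theta)$ by the ergodic theorem for $\{X_{t_{j-1}}\}$ under Assumption~\ref{Stability} and $nh_n\to\infty$ (in the $\gam$-part one first replaces $(\D_jJ)^2$ by $h$ as above and then identifies the limit $\int\{[-\p_\gam c^{-2}(x,\gam)]c^2(x,\gam_0)-\p_\gam c^2(x,\gam)/c^2(x,\gam)\}\pi_0(dx)$ with $G_\infty^\gam(\theta)$, using $-\p_\gam c^{-2}=2c^{-3}\p_\gam c$). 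To upgrade pointwise to uniform convergence I would use stochastic equicontinuity: the entries of $\p_\theta\tilde G_n(\theta)$ are all of the type treated in Lemma~\ref{MF} (sums $\frac1{nh}\sum_j f_{j-1}(\theta)(\D_jX-ha_{j-1}(\al))^{p}$ with $p\in\{1,2\}$, plus $O_p(1)$ ergodic averages), so $\sup_n\sup_\theta E[|\p_\theta\tilde G_n(\theta)|^{q}]<\infty$ for every $q\ge1$, and combining this tight Lipschitz modulus with the continuity of $G_\infty$ and a finite covering of $\bar\Theta$ yields $\sup_\theta|\tilde G_n(\theta)-G_\infty(\theta)|\cip0$.

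For the asymptotic normality at $\theta_0$ I would first carry out the Euler/It\^o--Taylor reduction $\sqrt{nh}\,\tilde G_n(\theta_0)=\frac1{\sqrt{nh}}\sum_j\chi_{j-1}+o_p(1)$, where $\chi_{j-1}:=\bigl(\tfrac{\p_\al a_{j-1}}{c_{j-1}}\D_jJ,\ \tfrac{2\p_\gam c_{j-1}}{c_{j-1}}((\D_jJ)^2-h)\bigr)\in\mathbb{R}^{p}$ (using $M_{j-1}c_{j-1}=\p_\al a_{j-1}/c_{j-1}$ and $[-\p_\gam c_{j-1}^{-2}]c_{j-1}^2=\p_\gam c_{j-1}^2/c_{j-1}^2=2\p_\gam c_{j-1}/c_{j-1}$). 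Establishing this amounts to showing that inside $\frac1{\sqrt{nh}}\sum_j(\cdot)$ one may replace $\D_jX-ha_{j-1}$ by $c_{j-1}\D_jJ$ and $(\D_jX-ha_{j-1})^2$ by $c_{j-1}^2(\D_jJ)^2$ at the cost of $o_p(1)$: writing $\D_jX-ha_{j-1}=c_{j-1}\D_jJ+\int_j(c_{s-}-c_{j-1})dJ_s+\int_j(a_s-a_{j-1})ds$ one controls every error term in $L^1$ or $L^2$ by Burkholder's inequality, the increment moment bound $E[|X_s-X_{j-1}|^{r}]\lesssim s-t_{j-1}$ (cf.\ \cite{Masuda2013}), and again $nh_n^2\to0$. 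Since $E[\D_jJ]=0$ and $E[(\D_jJ)^2]=h$, the array $\{(nh)^{-1/2}\chi_{j-1}\}_j$ is a martingale difference array, so I would apply a triangular-array martingale CLT (\cite{Dvoretzky1972}, or the Genon-Catalot--Jacod lemma \cite{Genon-catalot1993} used in \cite{Masuda2013}), for which it remains to check (i) $\frac1{nh}\sum_j E^{j-1}[\chi_{j-1}^{\otimes2}]\cip\Sig_{22}$ and (ii) $\frac1{(nh)^2}\sum_j E^{j-1}[|\chi_{j-1}|^{4}]\cip0$. For (i), using $E[(\D_jJ)^3]=h\int z^3\nu_0(dz)$, $E[(\D_jJ)^4]=h\int z^4\nu_0(dz)+3h^2$ and hence $E[((\D_jJ)^2-h)^2]=h\int z^4\nu_0(dz)+2h^2$, the matrix $\frac1{nh}\sum_j E^{j-1}[\chi_{j-1}^{\otimes2}]$ equals $\frac1n\sum_j(\p_\al a_{j-1})^{\otimes2}/c_{j-1}^2$ in the $\al\al$ block, $\bigl(\int z^4\nu_0(dz)+2h\bigr)\frac4n\sum_j(\p_\gam c_{j-1})^{\otimes2}/c_{j-1}^2$ in the $\gam\gam$ block, and $\bigl(\int z^3\nu_0(dz)\bigr)\frac2n\sum_j\p_\al a_{j-1}(\p_\gam c_{j-1})^{T}/c_{j-1}^2$ in the $\al\gam$ block, and the ergodic theorem together with $h\to0$ identifies the limit with the corresponding blocks of $\Sig_{22}$. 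For (ii), Lemma~\ref{MC} gives $E[(\D_jJ)^{2k}]=O(h)$, so $E^{j-1}[|\chi_{j-1}|^{4}]\lesssim(1+|X_{j-1}|^{C})O(h)$ and $\frac1{(nh)^2}\sum_j E^{j-1}[|\chi_{j-1}|^{4}]=O_p((nh)^{-1})\to0$ since $nh_n\to\infty$.

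The main obstacle is the bookkeeping in these reductions: one must keep careful track of which remainder terms are of order $h$ and which of order $h^{2}$. It is precisely the fact that $E[((\D_jJ)^2-h)^2]$ is of exact order $h$ (rather than $h^{2}$, as it would be in the Wiener case) that both justifies the $\sqrt{nh_n}$-rate in the $\gam$-coordinate and produces the factor $\int z^4\nu_0(dz)$ in the $\gam\gam$ block of $\Sig_{22}$, while $E[(\D_jJ)^3]=h\int z^3\nu_0(dz)$ is what makes the $\al\gam$ block generically nonzero. Most of the required estimates are already available in \cite{Masuda2013}.
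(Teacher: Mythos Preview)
Your proposal is correct and follows essentially the same route as the paper's proof: the same Euler decomposition of $\Delta_jX-ha_{j-1}(\alpha)$, the same reduction $\sqrt{nh}\,\tilde G_n(\theta_0)=\frac{1}{\sqrt{nh}}\sum_j\chi_{j-1}+o_p(1)$ with $\chi_{j-1}=(\tfrac{\partial_\alpha a_{j-1}}{c_{j-1}}\Delta_jJ,\,\tfrac{2\partial_\gamma c_{j-1}}{c_{j-1}}((\Delta_jJ)^2-h))$, and the same martingale CLT with the moment computations you indicate. The only cosmetic difference is in the passage from pointwise to uniform convergence: the paper bounds $E[\sup_\theta|\partial_\theta\tilde G_n(\theta)|^q]$ via Sobolev's inequality (which costs one further $\theta$-derivative, supplied by Lemma~\ref{MF}) and then invokes tightness in $C(\bar\Theta)$, whereas you appeal to the $L^q$-Lipschitz bound $\sup_\theta E[|\partial_\theta\tilde G_n(\theta)|^q]<\infty$ together with a covering argument; the latter really needs a Kolmogorov/chaining step rather than a bare finite cover, but with $q>p$ this is standard and yields the same conclusion.
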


\begin{proof}
For simplicity, we do suppose that $p_\alpha=p_\gamma=1$; the high dimensional case is similar.  
First, we will show the $\theta$-pointwise convergence
\begin{equation}\nn 
\left|\tilde{G}_n(\theta)-G_\infty(\theta)\right|\overset{P_0}\longrightarrow0.
\end{equation}
From \cite[Lemma 9]{Genon-catalot1993}, it suffices to that show 
\begin{align*}
&\frac{1}{nh} \sum_jE^{j-1}\left[M_{j-1}(\theta) (\Delta_jX -h a_{j-1}(\alpha))\right] \overset{P_0} \longrightarrow G_\infty^{\alpha}(\theta),\\
&\frac{1}{nh}  \sum_jE^{j-1}\left[\left[-\partial_\gamma c_{j-1}^{-2}(\gamma)\right](\Delta_jX-h a_{j-1}(\alpha))^2-h \frac{\partial_\gamma c_{j-1}^2(\gamma)}{c_{j-1}^2(\gamma)}\right]\overset{P_0} \longrightarrow G_\infty^{\gamma}(\gamma),\\
&\frac{1}{(nh)^2}  \sum_jE^{j-1}\left[\left|M_{j-1}(\theta) (\Delta_jX -h a_{j-1}(\alpha))\right|^2\right] \overset{P_0} \longrightarrow 0,\\
&\frac{1}{(nh)^2} \sum_jE^{j-1}\left[\left|\left[-\partial_\gamma c_{j-1}^{-2}(\gamma)\right](\Delta_jX-h a_{j-1}(\alpha))^2-h \frac{\partial_\gamma c_{j-1}^2(\gamma)}{c_{j-1}^2(\gamma)}\right|^2\right]\overset{P_0} \longrightarrow 0.
\end{align*}
By the definition of $X$, we observe that
\begin{equation}\nn
\Delta_jX-h a_{j-1}(\alpha)=\int_j (a_s-a_{j-1})ds+\int_j c_{s-} dJ_s+h (a_{j-1}-a_{j-1}(\alpha)).
\end{equation}
Hence the martingale property of $\int_{t_{j-1}}^s c_{u-} dJ_u$ implies that
\begin{equation}\nn
E^{j-1}\left[M_{j-1}(\theta) (\Delta_jX -h a_{j-1}(\alpha))\right]=M_{j-1}(\theta)\left\{h (a_{j-1}-a_{j-1}(\alpha))+E^{j-1}\left[\int_j(a_s-a_{j-1})ds\right] \right\}.
\end{equation}
Now, from \cite[Lemma 4.5]{Masuda2013} and $ \sup_\theta|M(x, \theta)|\lesssim1+|x|^C$ for some $C\geq0$, we have 
\begin{align*}
E\left[\left|\frac{1}{nh}\sum_jM_{j-1}(\theta) E^{j-1}\left[\int_j(a_s-a_{j-1})ds\right]\right|\right] &\leq\frac{1}{nh} \sum_j\sqrt{E[|M_{j-1}(\theta)|^2]} \sqrt{E\left[\left|E^{j-1}\left[\int_j(a_s-a_{j-1})ds\right]\right|^2\right]}\\
&\lesssim\frac{1}{nh} \sum_j\sqrt{h \int_j E[|a_s-a_{j-1}|^2]ds}\\
&\lesssim\frac{1}{nh} \sum_j\sqrt{h \int_j E[|X_s-X_{j-1}|^2] ds} \lesssim \sqrt{h} = o(1),
\end{align*}
so the ergodic theorem gives 
\begin{equation}\nn
\frac{1}{nh} \sum_jE^{j-1}\left[M_{j-1}(\theta) (\Delta_jX -h a_{j-1}(\alpha))\right] \overset{P_0} \longrightarrow G_\infty^{\alpha}(\theta).
\end{equation}
Similarly, we see that
\begin{align*}
&E^{j-1}\left[\left[-\partial_\gamma c_{j-1}^{-2}(\gamma)\right](\Delta_jX-h a_{j-1}(\alpha))^2-h \frac{\partial_\gamma c_{j-1}^2(\gamma)}{c_{j-1}^2(\gamma)}\right]\\
&=-\partial_\gamma c_{j-1}^{-2}(\gamma) E^{j-1}\left[\left(\int_j (a_s-a_{j-1})ds+\int_j (c_{s-}-c_{j-1}) dJ_s+h (a_{j-1}-a_{j-1}(\alpha))+c_{j-1}\Delta_jJ\right)^2\right] -\frac{h\partial_\gamma c_{j-1}^2(\gamma)}{c_{j-1}^2(\gamma)}\\
&=-\partial_\gamma c_{j-1}^{-2}(\gamma) E^{j-1}\left[\zeta_{s,j-1}^2+2\zeta_{s,j-1} c_{j-1}\Delta_jJ+c_{j-1}^2(\Delta_jJ)^2 \right] -\frac{h\partial_\gamma c_{j-1}^2(\gamma)}{c_{j-1}^2(\gamma)},
\end{align*}
where $\zeta_{s,j-1}:=\int_j (a_s-a_{j-1})ds+\int_j (c_{s-}-c_{j-1}) dJ_s+h (a_{j-1}-a_{j-1}(\alpha))$.
Applying \cite[Lemma 4.5]{Masuda2013} and Burkholder's inequality, we see that
\begin{align*}
&E^{j-1}[\zeta_{s,j-1}^2]\\
&\lesssim E^{j-1}\left[\left|\int_j (a_s-a_{j-1})ds\right|^2\right]+E^{j-1}\left[\left|\int_j (c_{s-}-c_{j-1}) dJ_s\right|^2\right]+E^{j-1}\left[\left| h (a_{j-1}-a_{j-1}(\alpha))\right|^2\right]\\
&\lesssim h \int_j E^{j-1}[|X_s-X_{j-1}|^2] ds+\int_j E^{j-1}[|X_s-X_{j-1}|^2] ds+h^2(a_{j-1}-a_{j-1}(\alpha))^2\\
&\lesssim h^2(1+|X_{j-1}|^C),
\end{align*}
and $E^{j-1}[c_{j-1}^2(\Delta_jJ)^2 ]=h c_{j-1}^2$.
Hence we have $\left|E^{j-1}[\zeta_{s,j-1} c_{j-1}\Delta_jJ]\right|\lesssim h^{\frac{3}{2}}(1+|X_{j-1}|^C)$ 
by conditional Cauchy-Schwarz's inequality. It follows that
\begin{equation}\nn
\frac{1}{nh}  \sum_jE^{j-1}\left[\left[-\partial_\gamma c_{j-1}^{-2}(\gamma)\right](\Delta_jX-h a_{j-1}(\alpha))^2-h \frac{\partial_\gamma c_{j-1}^2(\gamma)}{c_{j-1}^2(\gamma)}\right]\overset{P_0} \longrightarrow G_\infty^{\gamma}(\gamma),
\end{equation}
from the ergodic theorem.
Above calculation yields that 
\begin{equation}\nn
E^{j-1}[|\Delta_jX-ha_{j-1}(\alpha)|^2] \lesssim h(1+|X_{j-1}|^C),
\end{equation} 
and obviously, this inequality is valid when we replace 2 with for any $q\geq 2$.
In the same way we can easily see that
\begin{align*}
&E^{j-1}\left[\left|\left[-\partial_\gamma c_{j-1}^{-2}(\gamma)\right](\Delta_jX-h a_{j-1}(\alpha))^2-h \frac{\partial_\gamma c_{j-1}^2(\gamma)}{c_{j-1}^2(\gamma)}\right|^2\right]\\
&\lesssim\left|-\partial_\gamma c_{j-1}^{-2}(\gamma)\right|^2 E^{j-1}[(\Delta_jX-h a_{j-1}(\alpha))^4]+h^2 \left|\frac{\partial_\gamma c_{j-1}^2(\gamma)}{c_{j-1}^2(\gamma)}\right|^2 \lesssim  h (1+|X_{j-1}|^C),
\end{align*}
so the ergodic theorem gives 
\begin{align*}
&\frac{1}{(nh)^2}  \sum_jE^{j-1}\left[\left|M_{j-1}(\theta) (\Delta_jX -h a_{j-1}(\alpha))\right|^2\right] \overset{P_0} \longrightarrow 0,\\
&\frac{1}{(nh)^2} \sum_jE^{j-1}\left[\left|\left[-\partial_\gamma c_{j-1}^{-2}(\gamma)\right](\Delta_jX-h a_{j-1}(\alpha))^2-h \frac{\partial_\gamma c_{j-1}^2(\gamma)}{c_{j-1}^2(\gamma)}\right|^2\right]\overset{P_0} \longrightarrow 0.
\end{align*}
As a result of these computations, we obtain the $\theta$-pointwise convergence
\begin{equation}\label{hm:eq.1}
\left|\tilde{G}_n(\theta)-G_\infty(\theta)\right| \overset{P_0} \longrightarrow 0.
\end{equation}
To prove the uniformity of \eqref{hm:eq.1}, 
it suffices to show the tightness, which is in turn implied by
\begin{equation}\nn
\sup_nE\left[\sup_\theta \left|\partial_\theta \tilde{G}_n(\theta)\right|\right]<\infty.
\nonumber
\end{equation}
In the case of $p_\alpha=p_\gamma=q=1$, we have
\begin{align*}
\partial_\alpha G_n^\alpha(\theta) &=\frac{1}{nh} \sum_j\left\{ \frac{\partial_\alpha^2 a_{j-1}(\alpha)}{c^2_{j-1}(\gamma)}(\Delta_jX-h a_{j-1}(\alpha)) -h \frac{(\partial_\alpha a_{j-1}(\alpha))^2}{c^2_{j-1}(\gamma)} \right\}, \\
\partial_\gamma G_n^\alpha(\theta) &=-\frac{1}{nh} \sum_j\frac{\partial_\alpha a_{j-1}(\alpha)\partial_\gamma c_{j-1}(\gamma)}{c_{j-1}^3(\gamma)}(\Delta_jX-h a_{j-1}(\alpha)), \\
\partial_\alpha G_n^\gamma(\theta)&=-\frac{2}{n} \sum_j\frac{\partial_\alpha a_{j-1}(\alpha)\partial_\gamma c_{j-1}(\gamma)}{c_{j-1}^3(\gamma)}(\Delta_jX-h a_{j-1}(\alpha)), \\
\partial_\gamma G_n^\gamma(\theta)&=\frac{2}{nh} \sum_j\left\{\frac{\partial^2_\gamma c_{j-1}(\gamma)c_{j-1}(\gamma)-3(\partial_\gamma c_{j-1}(\gamma))^2}{c_{j-1}^4(\gamma)}(\Delta_jX-h a_{j-1}(\alpha))^2  \right.\\
-&\left. h \frac{\partial^2_\gamma c_{j-1}(\gamma)c_{j-1}(\gamma)-(\partial_\gamma c_{j-1}(\gamma))^2}{c_{j-1}^2(\gamma)} \right\},
\end{align*}
and if we impose some regularity conditions  on $a$ and $c$, we can calculate the high-order derivative of $\tilde{G}_n(\theta)$ readily. 
Sobolev's inequality and Lemma \ref{MF} imply that for $q>p$,
\begin{equation}\nn
E\left[\sup_\theta \left|\partial_\theta \tilde{G}_n(\theta)\right|^q\right] \lesssim \sup_\theta E\left[\left|\partial_\theta \tilde{G}_n(\theta)\right|^q+\left|\partial_\theta^2 \tilde{G}_n(\theta)\right|^q\right]<\infty.
\end{equation}
Hence we are able to conclude that $[\{\tilde{G}_n(\theta)-G_\infty(\theta)\}_{\theta\in\Theta}]_{n\in\mathbb{N}}$ is uniformly tight (see, e.g. \cite{Kunita1997}) so that the continuous mapping theorem yields that $\sup_{\theta\in\Theta}|\tilde{G}_n(\theta)-G_\infty(\theta)|\overset{P_0}\longrightarrow0$.
Moreover, the consistency of $\hat{\theta}$ immediately follows from \cite[Theorem 5.3]{VanderVaart2000}. 
We will observe that
\begin{align*}
\sqrt{nh}G_n^\alpha(\theta_0) &=\frac{1}{\sqrt{nh}}\sum_j\frac{\partial_\alpha a_{j-1}}{c_{j-1}}\Delta_jJ +o_p(1),\\
\sqrt{nh}G_n^\gamma(\theta_0) &=\frac{2}{\sqrt{nh}}\sum_j\left\{\frac{\partial_\gamma c_{j-1}}{c_{j-1}}((\Delta_jJ)^2-h)\right\}+ o_p(1).
\end{align*}
Trivial decomposition leads to
\begin{align*}
\sqrt{nh}G_n^\alpha(\theta_0)&=\frac{1}{\sqrt{nh}}\sum_jM_{j-1} (\Delta_jX -h_n a_{j-1})\\
&=\frac{1}{\sqrt{nh}}\sum_jM_{j-1}\int_j(a_s-a_{j-1})ds \nn\\
&{}+\frac{1}{\sqrt{nh}}\sum_jM_{j-1}\int_j(c_{s-}-c_{j-1})dJ_s+ \frac{1}{\sqrt{nh}}\sum_j\frac{\partial_\alpha a_{j-1}}{c_{j-1}}\Delta_jJ. 
\end{align*}
From this, it suffices to show that $\frac{1}{\sqrt{nh}}\sum_jM_{j-1}\int_j(a_s-a_{j-1})ds$ and $\frac{1}{\sqrt{nh}}\sum_jM_{j-1}\int_j(c_{s-}-c_{j-1})dJ_s$ are $o_p(1)$.
Notice that $|M_{j-1}|\lesssim(1+|X_{j-1}|^C)$.
As in the proof of Lemma \ref{MF}, we can observe that these terms are $o_p(1)$.
Hence we get
\begin{equation}\nn
\sqrt{nh}G_n^\alpha(\theta_0) =\frac{1}{\sqrt{nh}}\sum_j\frac{\partial_\alpha a_{j-1}}{c_{j-1}}\Delta_jJ +o_p(1).
\end{equation}
It is clear that
\begin{align*}
\sqrt{nh}G_n^\gamma(\theta_0)&=\frac{1}{\sqrt{nh}}\sum_{j=1}^n \left\{\left[-\partial_\gamma c_{j-1}^{-2}\right](\Delta_jX-h a_{j-1})^2-h \frac{\partial_\gamma c_{j-1}^2}{c_{j-1}^2}\right\}\\
&=\frac{1}{\sqrt{nh}}\sum_j\left[-\partial_\gamma c_{j-1}^{-2}\right]\left(\int_j(a_s-a_{j-1})ds+\int_j(c_{s-}-c_{j-1})dJ_s\right)^2\\
&+\frac{2}{\sqrt{nh}}\sum_j\left[-\partial_\gamma c_{j-1}^{-2}\right]c_{j-1}\Delta_jJ\left(\int_j(a_s-a_{j-1})ds+\int_j(c_{s-}-c_{j-1})dJ_s\right)\\
&+\frac{2}{\sqrt{nh}}\sum_j\left\{\frac{\partial_\gamma c_{j-1}}{c_{j-1}}((\Delta_jJ)^2-h)\right\}.
\end{align*}
By Assumption \ref{Smoothness}, $\partial_\gamma c_{j-1}^{-2}$ admits a polynomial majorant, so it follows that
\begin{equation}\nn
E\left[\left|\frac{1}{\sqrt{nh}}\sum_j\left[-\partial_\gamma c_{j-1}^{-2}\right]\left(\int_j(a_s-a_{j-1})ds+\int_j(c_{s-}-c_{j-1})dJ_s\right)^2\right|\right] =o(1),
\end{equation}
from Lemma \ref{MF}.
Similar calculations yield that 
\begin{align*}
&E\left[\left|\frac{1}{\sqrt{nh}}\sum_j\left[-\partial_\gamma c_{j-1}^{-2}\right]c_{j-1}\Delta_jJ\left(\int_j(a_s-a_{j-1})ds+\int_j(c_{s-}-c_{j-1})dJ_s\right)\right|\right]\\
&\lesssim\frac{1}{\sqrt{nh}}\sum_jE\left[|\partial_\gamma c_{j-1}\Delta_jJ|\left(\left|\int_j(a_s-a_{j-1})ds\right|+\left|\int_j(c_{s-}-c_{j-1})dJ_s\right|\right)\right]\\
&\lesssim\frac{1}{\sqrt{nh}}\sum_j\sqrt{E[\Delta_jJ]^2}\left(\sqrt{E\left[\left|\int_j(a_s-a_{j-1})ds\right|^2\right]}+\sqrt{E\left[\left|\int_j(c_{s-}-c_{j-1})dJ_s\right|^2\right]}\right).
\end{align*}
In the last inequality, we used the independence of increments of $J$.
By Lemma \ref{MC}, we observe that $\frac{1}{h}E[J_h^2]\to 1$, so we see that 
\begin{align*}
&\frac{1}{\sqrt{nh}}\sum_j\sqrt{E[(\Delta_jJ)^2]}\left(\sqrt{E\left[\left|\int_j(a_s-a_{j-1})ds\right|^2\right]}+\sqrt{E\left[\left|\int_j(c_{s-}-c_{j-1})dJ_s\right|^2\right]}\right)\\
&\lesssim\frac{1}{\sqrt{nh}}\sum_j\sqrt{h}({h}^\frac{3}{2}+h) \lesssim \sqrt{nh^2}=o(1).
\end{align*}
Hence we get
\begin{equation}\nn
\sqrt{nh}G_n^\gamma(\theta_0)=\frac{2}{\sqrt{nh}}\sum_j\left\{\frac{\partial_\gamma c_{j-1}}{c_{j-1}}((\Delta_jJ)^2-h)\right\}+o_p(1).
\end{equation}
We define
\begin{align*}
\sqrt{nh}\tilde{G}_n^\alpha(\theta_0)&=\frac{1}{\sqrt{nh}}\sum_j\frac{\partial_\alpha a_{j-1}}{c_{j-1}}\Delta_jJ,\\
\sqrt{nh}\tilde{G}_n^\gamma(\gamma_0)&=\frac{2}{\sqrt{nh}}\sum_j\left\{\frac{\partial_\gamma c_{j-1}}{c_{j-1}}((\Delta_jJ)^2-h)\right\}.
\end{align*} 
From Assumption \ref{Moments}, we have
\begin{align*}
&\sum_jE^{j-1}\left[\frac{\partial_\alpha a_{j-1}}{c_{j-1}}\Delta_jJ\right]=0,\\
&\sum_jE^{j-1}\left[\frac{\partial_\gamma c_{j-1}}{c_{j-1}}((\Delta_jJ)^2-h)\right]=0.
\end{align*}
The ergodic theorem and Lemma \ref{MC} give 
\begin{align*}
&\frac{1}{(nh)^2}\sum_jE^{j-1}\left[\left|\frac{\partial_\alpha a_{j-1}}{c_{j-1}}\Delta_jJ\right|^4\right] \leq  \frac{1}{(nh)^2}\sum_j\left|\frac{\partial_\alpha a_{j-1}}{c_{j-1}}\right|^4 E[J_h^4] \lesssim \frac{1}{nh}=o(1),\\ 
&\frac{1}{(nh)^2}\sum_jE^{j-1}\left[\left|\frac{\partial_\gamma c_{j-1}}{c_{j-1}}((\Delta_jJ)^2-h)\right|^4\right] \leq  \frac{1}{(nh)^2}\sum_j\left|\frac{\partial_\gamma c_{j-1}}{c_{j-1}}\right|^4  E\left[(J_h^2-h)^4\right] \lesssim \frac{1}{nh}=o(1), 
\end{align*}
so the Lindeberg condition holds.
Furthermore we get 
\begin{align*}
&E^{j-1}\left[\frac{\partial_{\alpha_k} a_{j-1}\partial_{\alpha_l} a_{j-1}}{c_{j-1}^2}(\Delta_jJ)^2\right] =\frac{\partial_{\alpha_k} a_{j-1} \partial_{\alpha_l} a_{j-1}}{c_{j-1}^2} E[J_h^2] = \frac{\partial_{\alpha_k} a_{j-1} \partial_{\alpha_l} a_{j-1}}{c_{j-1}^2} h,\\
&E^{j-1} \left[\frac{\partial_{\gamma_k} c_{j-1}\partial_{\gamma_l} c_{j-1}}{c_{j-1}^2}((\Delta_jJ)^2-h)^2\right]= \frac{\partial_{\gamma_k} c_{j-1}\partial_{\gamma_l} c_{j-1}}{c_{j-1}^2} E[(J_h^2-h)^2] = \frac{\partial_{\gamma_k} c_{j-1}\partial_{\gamma_l} c_{j-1}}{c_{j-1}^2} \left\{E[J_h^4]+o(h)\right\},\\
&E^{j-1}\left[\frac{\partial_{\alpha_k} a_{j-1}\partial_{\gamma_l} c_{j-1}}{c_{j-1}^2} \Delta_jJ ((\Delta_jJ)^2-h) \right] =\frac{\partial_{\alpha_k} a_{j-1}\partial_{\gamma_l} c_{j-1}}{c_{j-1}^2} E[J_h^3].
\end{align*}
Finally, we apply the ergodic theorem to derive
\begin{align*}
&\frac{1}{nh}\sum_jE^{j-1}\left[\frac{\partial_{\alpha_k} a_{j-1}\partial_{\alpha_l} a_{j-1}}{c_{j-1}^2}(\Delta_jJ)^2\right]  \overset{P_0}\longrightarrow \int \frac{\partial_{\alpha_k} a(x,\alpha_0) \partial_{\alpha_l} a(x,\alpha_0)}{c^2(x,\gamma_0)} \pi_0(dx),\\
&\frac{1}{nh}\sum_jE^{j-1} \left[\frac{\partial_{\gamma_k} c_{j-1}\partial_{\gamma_l} c_{j-1}}{c_{j-1}^2}((\Delta_jJ)^2-h)^2\right] \overset{P_0}\longrightarrow \int \frac{\partial_{\gamma_k} c(x,\gamma_0)\partial_{\gamma_l} c(x,\gamma_0)}{c^2(x,\gamma_0)} \pi_0(dx) \int z^4 \nu_0(dz),\\
&\frac{1}{nh}\sum_jE^{j-1}\left[\frac{\partial_{\alpha_k} a_{j-1}\partial_{\gamma_l} c_{j-1}}{c_{j-1}^2} \Delta_jJ ((\Delta_jJ)^2-h) \right] \overset{P_0}\longrightarrow \int \frac{\partial_{\alpha_k} a(x,\alpha_0)\partial_{\gamma_l} c(x,\gamma_0)}{c^2(x,\gamma_0)} \pi_0(dx) \int z^3 \nu_0(dz),
\end{align*}
with which the martingale central limit theorem completes the proof.
\end{proof}

Applying Taylor's theorem to $\tilde{G}_n(\theta_0)$, we get
\begin{equation}\nn
\tilde{G}_n(\theta_0) = -\int_0^1\partial_\theta\tilde{G}(\hat{\theta}+u(\theta_0-\hat{\theta}))du\sqrt{nh}(\hat{\theta}-\theta_0).
\end{equation}
Note that by the consistency of $\alpha$ and $\gamma$, we can consider $\tilde{G}_n(\hat{\theta})=0$ a.s., for large enough $n$.

\begin{Lem}\label{AN3}
If Assumptions \ref{Sampling design}-\ref{Nondegeneracy} hold, we have
\begin{align*}
&\sup_{|\theta|\leq\epsilon_n}\left|-\partial_\theta \tilde{G}_n(\theta_0+\theta)-\mathcal{I}(\theta_0)\right|\longrightarrow0,\quad where\quad\epsilon_n\to0\\
&\sqrt{nh}(\hat{\theta}-\theta_0) \overset{\mathcal{L}} \longrightarrow \mathcal{N}(0,(\mathcal{I}(\theta_0)^{-1})^T\Sigma_{22}\mathcal{I}(\theta_0)^{-1}). 
\end{align*}
\end{Lem}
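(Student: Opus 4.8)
The plan is to establish the two displays in turn, recycling the machinery developed for Lemma~\ref{AN2}.

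\emph{Pointwise convergence of the Hessian.} First I would prove $-\p_\theta\tilde G_n(\theta_0)\cip\mathcal{I}(\theta_0)$, arguing exactly as in the proof of Lemma~\ref{AN2}. Starting from the explicit block formulas for $\p_\theta\tilde G_n(\theta)$ displayed above and the decomposition $\D_jX-ha_{j-1}=\int_j(a_s-a_{j-1})\,ds+\int_j(c_{s-}-c_{j-1})\,dJ_s+c_{j-1}\D_jJ$, one applies \cite[Lemma~9]{Genon-catalot1993} (convergence of $\frac1{nh}\sum_jE^{j-1}[\cdot]$ and vanishing of $\frac1{(nh)^2}\sum_jE^{j-1}[|\cdot|^2]$) together with the ergodic theorem. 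For the off-diagonal blocks $\p_\gamma G_n^\alpha(\theta_0)$ and $\p_\alpha G_n^\gamma(\theta_0)$, each summand is a polynomially bounded factor times $\D_jX-ha_{j-1}$, whose conditional mean is $O(h^2)(1+|X_{j-1}|^C)$ and conditional second moment $O(h)(1+|X_{j-1}|^C)$; hence both blocks are $o_p(1)$. For the $\alpha\alpha$-block the linear term is $o_p(1)$ for the same reason, and the remaining term $-\frac1n\sum_j\frac{(\p_\alpha a_{j-1})^{\otimes2}}{c^2_{j-1}}$ converges by the ergodic theorem to $-\mathcal{I}^\alpha(\theta_0)$. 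For the $\gamma\gamma$-block one uses $E^{j-1}[(\D_jX-ha_{j-1})^2]=hc^2_{j-1}+O(h^{3/2})(1+|X_{j-1}|^C)$ together with the identities $\p_\gamma c^{-2}=-2c^{-3}\p_\gamma c$ and $\p_\gamma^{\otimes2}c^{-2}=6c^{-4}(\p_\gamma c)^{\otimes2}-2c^{-3}\p_\gamma^{\otimes2}c$; the two $\p_\gamma^{\otimes2}c$-contributions cancel and the ergodic theorem leaves $-\p_\gamma G_n^\gamma(\theta_0)\cip4\int\frac{(\p_\gamma c(x,\gamma_0))^{\otimes2}}{c^2(x,\gamma_0)}\pi_0(dx)=\mathcal{I}^\gamma(\theta_0)$. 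Hence $-\p_\theta\tilde G_n(\theta_0)\cip\diag(\mathcal{I}^\alpha(\theta_0),\mathcal{I}^\gamma(\theta_0))=\mathcal{I}(\theta_0)$.

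\emph{Upgrading to locally uniform convergence.} As in the tightness step of Lemma~\ref{AN2}, Sobolev's inequality and Lemma~\ref{MF} give $\sup_nE[\sup_\theta|\p_\theta^2\tilde G_n(\theta)|^q]<\infty$ for $q>p$, since the second derivatives of $\tilde G_n$ have the same structural form as the first with higher-order derivatives of $a$ and $c$ that are polynomially bounded by Assumption~\ref{Smoothness}. Therefore, for any deterministic $\epsilon_n\to0$,
\[
\sup_{|\theta|\le\epsilon_n}\big|\p_\theta\tilde G_n(\theta_0+\theta)-\p_\theta\tilde G_n(\theta_0)\big|\le\epsilon_n\sup_{\theta'\in\Theta}\big|\p_\theta^2\tilde G_n(\theta')\big|=\epsilon_n\,O_p(1)=o_p(1),
\]
which together with the pointwise limit yields the first assertion, $\sup_{|\theta|\le\epsilon_n}|-\p_\theta\tilde G_n(\theta_0+\theta)-\mathcal{I}(\theta_0)|\cip0$.

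\emph{Asymptotic normality.} By the consistency $\hat\theta_n\cip\theta_0$ (Lemma~\ref{AN2} and \cite[Theorem~5.3]{VanderVaart2000}) we may work, for large $n$, on the event $\tilde G_n(\hat\theta_n)=0$, so the Taylor expansion recorded just before the lemma reads
\[
\sqrt{nh}\,\tilde G_n(\theta_0)=-\Big(\int_0^1\p_\theta\tilde G_n\big(\theta_0+u(\hat\theta_n-\theta_0)\big)\,du\Big)\sqrt{nh}(\hat\theta_n-\theta_0).
\]
Applying the locally uniform convergence along a deterministic $\epsilon_n\to0$ that eventually dominates $|\hat\theta_n-\theta_0|$ with probability tending to one, the bracketed integral converges in probability to $-\mathcal{I}(\theta_0)$, which is invertible by Assumption~\ref{Nondegeneracy}. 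Since $\sqrt{nh}\,\tilde G_n(\theta_0)\cil\mathcal{N}_p(0,\Sigma_{22})$ by Lemma~\ref{AN2}, Slutsky's lemma gives
\[
\sqrt{nh}(\hat\theta_n-\theta_0)\cil\mathcal{I}(\theta_0)^{-1}\mathcal{N}_p(0,\Sigma_{22})=\mathcal{N}_p\big(0,(\mathcal{I}(\theta_0)^{-1})^T\Sigma_{22}\,\mathcal{I}(\theta_0)^{-1}\big),
\]
using that $\mathcal{I}(\theta_0)$ is symmetric. The main obstacle will be the first step: the careful bookkeeping in $-\p_\theta\tilde G_n(\theta_0)\cip\mathcal{I}(\theta_0)$, and in particular verifying the factor $4$ in $\mathcal{I}^\gamma(\theta_0)$ via the cancellation of the $\p_\gamma^{\otimes2}c$-terms after the Euler approximation; the passage to a shrinking neighborhood and the Slutsky argument are routine once Lemma~\ref{MF} and Lemma~\ref{AN2} are available.
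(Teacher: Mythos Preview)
Your argument is correct and matches the paper's proof in structure: pointwise convergence of $-\p_\theta\tilde G_n$ via the conditional-moment scheme of Lemma~\ref{AN2}, moment bounds on higher $\theta$-derivatives through Sobolev's inequality and Lemma~\ref{MF}, and then the Taylor--Slutsky step using Assumption~\ref{Nondegeneracy}. The only minor difference is that the paper first establishes $-\p_\theta\tilde G_n(\theta)\cip\mathcal{I}(\theta)$ for \emph{every} $\theta$ and deduces the shrinking-neighborhood statement from uniform convergence over $\Theta$ together with continuity of $\theta\mapsto\mathcal{I}(\theta)$, whereas you prove the pointwise limit only at $\theta_0$ and close the gap directly via the mean-value bound $\epsilon_n\sup_\theta|\p_\theta^2\tilde G_n(\theta)|=o_p(1)$; both routes rest on the same Sobolev estimate.
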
 

\begin{proof}
We may set $p_\alpha=p_\gamma=1$. Define the $2\times 2$-valued matrix $\mathcal{I}(\theta)$ such that
\begin{equation}\nn
\mathcal{I}(\theta)= \begin{pmatrix}
\mathcal{I}^{(\alpha,\alpha)}(\theta)&\mathcal{I}^{(\alpha,\gamma)}(\theta)\\
0&\mathcal{I}^{(\gamma,\gamma)}(\theta)
\end{pmatrix},
\end{equation}
where $\mathcal{I}^{(\alpha,\alpha)}(\theta)$, $\mathcal{I}^{(\alpha,\gamma)}(\theta)$ and $\mathcal{I}^{(\gamma,\gamma)}(\theta)$ are defined by
\begin{align*}
&\mathcal{I}^{(\alpha,\alpha)}(\theta)=\int \left\{\frac{\partial^2_\alpha a(x,\alpha)}{c^2(x,\gamma)}(a(x,\alpha)-a(x,\alpha_0))+\frac{(\partial_\alpha a(x,\alpha))^2}{c(x,\gamma)^2}\right\} \pi_0(dx),\\
&\mathcal{I}^{(\alpha,\gamma)}(\theta)=\int \frac{\partial_\alpha a(x,\alpha)\partial_\gamma c(x,\gamma)}{c^3(x,\gamma)}(a(x,\alpha_0)-a(x,\alpha)) \pi_0(dx),\\
&\mathcal{I}^{(\gamma,\gamma)}(\theta)=4\int \frac{(\partial_\gamma c(x,\gamma))^2 }{c^2(x,\gamma)} \pi_0(dx).
\end{align*}
As in the previous lemma, we can prove
\begin{equation}\nn
-\partial_\theta \tilde{G}_n(\theta)\overset{P_0}\longrightarrow \mathcal{I}(\theta),\quad\mbox{for all} \ \theta.
\end{equation}
By Assumption \ref{Smoothness}, it immediately follows that for all $k\in\left\{1,2,3,4\right\}$, $\partial_\theta^k \tilde{G}_n(\theta)$ can be decomposed as
\begin{equation}\nn
\partial_\theta^k \tilde{G}_n(\theta)=\frac{1}{nh} \sum_j\left\{M_{j-1}^{(1,k)}(\theta)(\Delta_jX-h a_{j-1}(\alpha))^2+M_{j-1}^{(2,k)}(\theta)(\Delta_jX-h a_{j-1}(\alpha))+h M_{j-1}^{(3,k)}(\theta)\right\},
\end{equation}
where $M_{j-1}^{(1,k)}$, $M_{j-1}^{(2,k)}$ and $M_{j-1}^{(3,k)}$ are functions of $X_{t_{j-1}}$ at most polynomial growth uniformly in $\theta$. 
Hence the Sobolev's inequality implies that $\left[\left\{-\partial_\theta \tilde{G}_n(\theta)-\mathcal{I}(\theta)\right\}_{\theta\in\Theta}\right]_{n\in\mathbb{N}}$ is uniformly tight and the continuous mapping theorem gives
\begin{equation}\nn
\sup_{|\theta|\leq\epsilon_n}\left|-\partial_\theta \tilde{G}_n(\theta_0+\theta)-\mathcal{I}(\theta_0)\right|\longrightarrow0,\quad where\quad\epsilon_n\to 0.
\end{equation}
Further, the continuity of $\mathcal{I}(\theta)$ and the consistency of $\hat{\theta}$ give 
\begin{equation}\nn
-\int_0^1\partial_\theta\tilde{G}(\hat{\theta}+u(\theta_0-\hat{\theta}))du \overset{P_0} \longrightarrow \mathcal{I}(\theta_0).
\end{equation}
Assumption \ref{Nondegeneracy} ensures that $ \lim_{n\to\infty}P\left(\left|-\int_0^1\partial_\theta\tilde{G}(\hat{\theta}+u(\theta_0-\hat{\theta}))du\right|>0\right)=1$, hence we can suppose that $-\int_0^1\partial_\theta\tilde{G}(\hat{\theta}+u(\theta_0-\hat{\theta}))du$ is invertible for all $n$ large enough. Hence, applying Slutsky's lemma, we have the desired result. 
\end{proof}

Obviously, it follows from Lemma \ref{AN3} that $-\partial_\theta \tilde{G}_n(\hat{\theta})$ 
can serve as a consistent estimator of $\mathcal{I}(\theta_0)$. In the same way, we could provide a consistent estimator of the asymptotic variance of $\hat{\theta}$, making it possible to construct a confidence region. 

We introduce the following function space:
\begin{align*}
\mathcal{K}_{2}
&=\Biggl\{f=(f_{k}):\mbbr\to\mbbr^{q}\, \Bigg|\, \text{$f$ is of class $C^{2}$}, \quad 
\frac{1}{h}\max_{1 \leq j\leq n}E\left[\left|\partial f(\delta_j)\right|^2\right]=O(1),
\nonumber \\
& {}\qquad \frac{1}{h}\max_{1\leq j\leq n} \sup_{u\in[0,1]}E\left[\left|\partial f(\Delta_jJ+u(\delta_j-\Delta_jJ))\right|^2\right]
=O(1),
\\
&{}\qquad\text{and} \quad 
\forall K>0,\quad 
\max_{1 \leq j\leq n}\sup_{u\in[0,1]}E
\left[\left|\partial^2 f(\hat{\delta}_j+u(\delta_j-\hat{\delta}_j))\right|^K\right]
=O(1)
\Biggr\}.
\end{align*}

By use of this class we can prove:
\begin{Lem}\label{BCL}
Suppose that Assumptions \ref{Sampling design}-\ref{Identifiability} hold and that $\varphi\in\mathcal{K}_2$.
Then we have the stochastic expansion:
\begin{equation*}
\sqrt{nh}\bigg(\frac{1}{nh}\sum_j\varphi(\hat{\delta}_j)-\nu_0(\varphi)\bigg)=u_n+\frac{1}{nh}\sum_j 
(\partial \varphi(\delta_j) \otimes \partial_\gamma(c_{j-1}^{-1}))c_{j-1}\D_{j}J[\hat{w}_n]+o_p(1),
\end{equation*}
where we also have $\frac{1}{nh}\sum_j (\partial \varphi(\delta_j) \otimes \partial_\gamma(c_{j-1}^{-1}))c_{j-1}\D_{j}J=O_p(1)$.
\end{Lem}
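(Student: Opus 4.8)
The plan is to start from the decomposition
\begin{equation*}
\sqrt{nh}\Bigl(\tfrac{1}{nh}\sum_j\vp(\hat\delta_j)-\nu_0(\vp)\Bigr)
= u_n+\frac{1}{\sqrt{nh}}\sum_j\bigl(\vp(\delta_j)-\vp(\D_{j}J)\bigr)
+\frac{1}{\sqrt{nh}}\sum_j\bigl(\vp(\hat\delta_j)-\vp(\delta_j)\bigr),
\end{equation*}
and to show that the first residual sum is $o_p(1)$ while the second equals $\frac{1}{nh}\sum_j(\p\vp(\delta_j)\otimes\p_\gamma(c_{j-1}^{-1}))c_{j-1}\D_{j}J[\hat{w}_n]$ up to $o_p(1)$. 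The standing inputs are: the moment bound $E[|\hat{v}_n|^q]=O(1)$ for every $q>0$ enjoyed by the GQMLE (Remark~\ref{rem_mconv}), so that $\hat{v}_n,\hat{w}_n=O_p(1)$ and $|\hat\theta_n-\theta_0|^k=O_p((nh)^{-k/2})$; the conditional moment estimates $E^{j-1}[|\delta_j|^q]\lesssim h(1+|X_{t_{j-1}}|^C)$ and $E^{j-1}[|\delta_j-\D_{j}J|^q]\lesssim h^2(1+|X_{t_{j-1}}|^C)$ for every $q\ge2$ (the latter is the bound on $\zeta_{s,j-1}$ already appearing in the proof of Lemma~\ref{AN2}, pushed to higher exponents by Burkholder's inequality and \cite[Lemma~4.5]{Masuda2013}); the defining clauses of $\mathcal K_2$ and $\mathcal K$; Assumption~\ref{Sampling design}, used mainly via $nh^2\to0$; and $\sup_tE_0[|X_t|^q]<\infty$, yielding $\frac1n\sum_j(1+|X_{t_{j-1}}|^C)=O_p(1)$.

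For $\frac{1}{\sqrt{nh}}\sum_j(\vp(\delta_j)-\vp(\D_{j}J))$ I would use the \emph{first-order} expansion $\vp(\delta_j)-\vp(\D_{j}J)=\int_0^1\p\vp\bigl(\D_{j}J+u(\delta_j-\D_{j}J)\bigr)\,du\,(\delta_j-\D_{j}J)$. By Cauchy--Schwarz, the $\mathcal K_2$-bound $E[|\p\vp(\D_{j}J+u(\delta_j-\D_{j}J))|^2]=O(h)$ uniformly in $j,u$, and $E^{j-1}[|\delta_j-\D_{j}J|^2]\lesssim h^2(1+|X_{t_{j-1}}|^C)$, each summand has $L^1$-norm $\lesssim h^{3/2}$, so the sum has $L^1$-norm $\lesssim(nh)^{-1/2}\cdot n\cdot h^{3/2}=\sqrt{nh^2}=o(1)$. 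Keeping this expansion at first order is essential, and is precisely why $\mathcal K_2$ controls $\p\vp$ rather than $\p^2\vp$ along the segment from $\D_{j}J$ to $\delta_j$.

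For $\frac{1}{\sqrt{nh}}\sum_j(\vp(\hat\delta_j)-\vp(\delta_j))$ I would Taylor-expand $\vp(\hat\delta_j)=\vp(\delta_j)+\p\vp(\delta_j)[\hat\delta_j-\delta_j]+R_j$, with $R_j$ the second-order remainder involving $\p^2\vp$ on the segment from $\delta_j$ to $\hat\delta_j$. Writing $\delta_j=g(X_{t_{j-1}},\D_{j}X;\theta_0)$, $\hat\delta_j=g(X_{t_{j-1}},\D_{j}X;\hat\theta_n)$ with $g(x,y;\theta)=c^{-1}(x,\gamma)(y-ha(x,\alpha))$ and expanding $g$ in $\theta$ about $\theta_0$ gives
\begin{equation*}
\hat\delta_j-\delta_j=\bigl(\p_\gamma(c_{j-1}^{-1})\,c_{j-1}\,\delta_j\bigr)[\hat\gamma_n-\gamma_0]+r_j,\qquad
|r_j|\lesssim(1+|X_{t_{j-1}}|^C)\bigl(h\,|\hat\alpha_n-\alpha_0|+(|\delta_j|+h)\,|\hat\theta_n-\theta_0|^2\bigr).
\end{equation*}
The leading term produces exactly $\frac{1}{nh}\sum_j(\p\vp(\delta_j)\otimes\p_\gamma(c_{j-1}^{-1}))c_{j-1}\delta_j[\hat{w}_n]$; replacing $\delta_j$ by $\D_{j}J$ inside it costs, after the $j$-free factor $\hat{w}_n=O_p(1)$ is pulled out, $\frac{1}{nh}\sum_j|\p\vp(\delta_j)|(1+|X_{t_{j-1}}|^C)|\delta_j-\D_{j}J|=o_p(1)$ by the same Cauchy--Schwarz/moment bookkeeping. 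The contribution of $\p\vp(\delta_j)r_j$ splits into the $\p_\alpha a$-piece — after pulling out $h(\hat\alpha_n-\alpha_0)=O_p(h(nh)^{-1/2})$, what remains has $L^1$-order $\sqrt n$ (since $E[|\p\vp(\delta_j)|]\lesssim\sqrt h$ and moment stability), so the piece is $O_p(h^{1/2})$ — and the quadratic-in-$(\hat\theta_n-\theta_0)$ piece — after pulling out $|\hat{v}_n|^2=O_p(1)$, what remains has $L^1$-order $(nh)^{-1/2}$ (using $E^{j-1}[|\p\vp(\delta_j)|(|\delta_j|+h)]\lesssim h(1+|X_{t_{j-1}}|^C)$). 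For $R_j$ I would bound $|\p^2\vp|$ on the segment by a polynomial in $|\delta_j|$ and $|\hat\delta_j-\delta_j|$ (via $\vp\in\mathcal K$ and the $\p^2\vp$-clause of $\mathcal K_2$), insert the bound for $|\hat\delta_j-\delta_j|$ above, factor out the $j$-free powers of $|\hat{v}_n|$ (each $O_p(1)$) together with powers of $(nh)^{-1/2}$, and use $E^{j-1}[|\delta_j|^q]\lesssim h(1+|X_{t_{j-1}}|^C)$ ($q\ge2$) to conclude that $\frac{1}{\sqrt{nh}}\sum_jR_j$ is of order $(nh)^{-1/2}$ in probability. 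Finally, $\frac{1}{nh}\sum_j(\p\vp(\delta_j)\otimes\p_\gamma(c_{j-1}^{-1}))c_{j-1}\D_{j}J=O_p(1)$ follows by Markov's inequality alone, from $E^{j-1}[|\p\vp(\delta_j)\,\D_{j}J|]\le(E^{j-1}[|\p\vp(\delta_j)|^2]E^{j-1}[(\D_{j}J)^2])^{1/2}\lesssim h(1+|X_{t_{j-1}}|^C)$.

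The main obstacle is the uniform bookkeeping of the remainder terms: confirming that in each term the per-summand order (usually $h$ or $h^{3/2}$ once one conditions on $\mcf_{t_{j-1}}$), the number $n$ of summands, the normalisation $(nh)^{-1/2}$ or $(nh)^{-1}$, and the factors $|\hat{v}_n|,|\hat{w}_n|$ combine, via $nh^2\to0$, to give $o_p(1)$. Two points require care: one must not sacrifice powers of $h$ by a careless Cauchy--Schwarz applied to a bare factor $\delta_j$ (whose $L^p$-norm is only $O(h^{1/p})$ for $p>2$, although its conditional second moment is $O(h)$), so such factors should be estimated in $L^1$ or $L^2$ only, through the conditional bounds; and the difference $\vp(\delta_j)-\vp(\D_{j}J)$ must be expanded to first order only, which is exactly what the $\mathcal K_2$ hypothesis on $\p\vp$ along the $\D_{j}J$--$\delta_j$ segment is tailored for.
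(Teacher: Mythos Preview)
Your decomposition, the first-order treatment of $\vp(\delta_j)-\vp(\Delta_jJ)$, the second-order expansion of $\vp(\hat\delta_j)-\vp(\delta_j)$, and the extraction of the leading $\hat w_n$-term are all exactly as in the paper; the only organisational difference is that the paper keeps $\Delta_jX$ as the expansion variable in $b_n^{(1,1)}$ and substitutes $\Delta_jX=\int_j a_s\,ds+\int_j(c_{s-}-c_{j-1})\,dJ_s+c_{j-1}\Delta_jJ$ at the end, whereas you work through $\delta_j$ and then replace $\delta_j$ by $\Delta_jJ$---these are equivalent.

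There is one point where your argument exceeds the stated hypotheses. The lemma assumes only $\vp\in\mathcal K_2$, not $\vp\in\mathcal K$, so the \emph{pointwise} polynomial bound on $\partial^2\vp$ you invoke for the remainder $R_j$ is not available here; $\mathcal K_2$ supplies only the unconditional moment bound $\sup_{j,u}E[|\partial^2\vp(\hat\delta_j+u(\delta_j-\hat\delta_j))|^K]=O(1)$. The paper therefore cannot ``open up'' $\partial^2\vp$ into a polynomial and condition on $\mathcal F_{t_{j-1}}$; instead it applies H\"older's inequality across the sum to separate the $|\partial^2\vp|$-factor (controlled in $L^p$ by the $\mathcal K_2$-clause) from the factor $|R_{j-1}|(|\Delta_jX|^2+h^2)$, and then a second H\"older with exponent $q$ close to $1$ to recover almost a full power of $h$ from $(\frac1n\sum_j|\Delta_jX|^{2pq/(p-1)})^{(p-1)/(pq)}$. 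This yields $b_n^{(1,2)}=O_p((nh^{1+\epsilon_0})^{-1/2})$, which is where Assumption~\ref{Sampling design} on $\epsilon_0$ enters. In the same vein, the clauses of $\mathcal K_2$ give only \emph{unconditional} bounds $E[|\partial\vp(\delta_j)|^2]=O(h)$, so a couple of your conditional estimates (e.g.\ $E^{j-1}[|\partial\vp(\delta_j)|\,|\delta_j|]\lesssim h(1+|X_{t_{j-1}}|^C)$) should be replaced by the corresponding unconditional Cauchy--Schwarz arguments, which go through without change.
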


\begin{proof}
First we decompose the left-hand side as
\begin{align*}
&\sqrt{nh}\left\{\frac{1}{nh}\sum_{j=1}^n\varphi(\hat{\delta}_j)-\nu_0(\varphi)\right\}\\
 &=\sqrt{nh}\left\{\frac{1}{nh}\sum_j\left[\varphi(\hat{\delta}_j)-\varphi(\delta_j)\right]\right\} +\sqrt{nh}\left\{\frac{1}{nh}\sum_j\left[\varphi(\delta_j)-\varphi(\Delta_jJ)\right]\right\}+u_n\\
&=: b_n^{(1)}+b_n^{(2)}+u_n.
\end{align*}
Let us first prove $b_n^{(2)}=o_p(1)$.
Applying Taylor's theorem, we see that
\begin{equation}\nn
b_n^{(2)}=\frac{1}{\sqrt{nh}} \sum_j\left[\int_0^1\partial\varphi(\Delta_jJ+u(\delta_j-\Delta_jJ))du\right](\delta_j -\Delta_jJ).
\end{equation}
By definition of $\delta_j$, it follows that
\begin{equation}\nn
\Delta_jJ-\delta_j =c_{j-1}^{-1}(c_{j-1}\Delta_jJ-\Delta_jX-ha_{j-1}) =c_{j-1}^{-1}\left(\int_j(a_s-a_{j-1})ds+\int_j(c_{s-}-c_{j-1})dJ_s\right).
\end{equation}
As in the proof of Lemma \ref{AN2}, we have
\begin{equation}\label{hm:add.eq6}
E\left[\left|\Delta_jJ-\delta_j\right|^q\right] \lesssim h^2,
\end{equation}
for all $q \geq 2$.
Applying Cauchy-Schwarz's inequality we get 
\begin{align*}
E[|b_n^{(2)}|]&\leq\frac{1}{\sqrt{n}} E\left[\sum_j\frac{1}{\sqrt{h}}\left|\int_0^1\partial\varphi(\Delta_jJ+u(\delta_j-\Delta_jJ))du\right||\Delta_jJ-\delta_j|\right]\\
&\leq\frac{1}{\sqrt{n}} \sum_j\sqrt{\frac{1}{h}E\left[\left|\int_0^1\partial\varphi(\Delta_jJ+u(\delta_j-\Delta_jJ))du\right|^2\right]} \sqrt{E[\left|\Delta_jJ-\delta_j\right|^2]}\\
&\leq\frac{1}{\sqrt{n}}\max_{1\leq j \leq n} \sqrt{\frac{1}{h}\sup_{u\in[0,1]}E\left[\left|\partial \varphi(\Delta_jJ+u(\delta_j-\Delta_jJ))\right|^2\right]} \sum_j\sqrt{E[\left|\Delta_jJ-\delta_j\right|^2]}\\
&\lesssim\sqrt{nh^2} =o(1).
\end{align*}

Next we turn to $b^{(1)}_{n}$. 
By Taylor's theorem, we have
\begin{align*} 
b_n^{(1)}&=\frac{1}{\sqrt{nh}}\sum_j\left[\partial\varphi(\delta_j)(\hat{\delta}_j-\delta_j) \right] 
+\frac{1}{2\sqrt{nh}}\sum_j\left[\int_0^1 \int_0^1 v \partial^2\varphi(\delta_j+uv(\hat{\delta}_j-\delta_j))dvdu
(\hat{\delta}_j-\delta_j)^2\right].\\
&=:b_n^{(1,1)}+b_n^{(1,2)}.
\end{align*}
For notational convenience, we denote by $R(x)$ a generic matrix-valued function defined on $\mbbr\times\Theta$ for which 
there exists a constant $C\ge 0$ such that $\sup_{\theta}|R(x,\theta)|\le C(1+|x|^{C})$ for every $x$; 
the argument $\theta$ is omitted from the notation, and the specific form of $R_{j-1}$ appearing below may vary from line to line. 
From the definition of $\hat{\delta}_j$ and $\delta_j$, 
\begin{align}
\hat{\delta}_j-\delta_j
&=\hat{c}_{j-1}^{-1}(\Delta_jX-h\hat{a}_{j-1})-c_{j-1}^{-1}(\Delta_jX-ha_{j-1}) \nn\\
&=(\hat{c}_{j-1}^{-1}-c_{j-1}^{-1})\Delta_jX -h(\hat{\eta}_{j-1}-\eta_{j-1}).
\end{align}
Again applying Taylor's theorem, we obtain
\begin{align}
|\hat{\delta}_j-\delta_j|^2 
&\lesssim\frac{1}{nh}\left[
\left(\sup_{\gam}|\partial_\gamma c_{j-1}^{-1}(\gam)|\right)^2|\hat{w}|^2|\Delta_jX|^2
+h^2\left(\sup_{\theta}|\partial_\theta \eta_{j-1}(\theta)|\right)^2|\hat{v}|^2\right]
\nn\\
&\lesssim \frac{1}{nh}\left(|\hat{w}|^2|\Delta_jX|^2+h^2|\hat{v}|^2\right)|R_{j-1}|
\nn\\
&\lesssim \frac{1}{nh}\left(|\Delta_jX|^2+h^2\right)|R_{j-1}||\hat{v}|^2.
\label{hm:add.eq7}
\end{align}
A similar argument to the proof of Lemma \ref{MF} gives the estimate $E[|R_{j-1}|E^{j-1}[|\Delta_jX|^q]]\lesssim h$ for all $q\geq2$. 
By means of these estimates and H\"older's inequality we can deduce that, 
for sufficiently large $p\ge 2$ and sufficiently small $q>1$,
\begin{align*}
|b_n^{(1,2)}|&\lesssim\frac{1}{\sqrt{nh}} \sum_j\left|\int_0^1 \int_0^1 v 
\partial^2\varphi(\delta_j+uv(\hat{\delta}_j-\delta_j))dvdu\right| |\hat{\delta}_j-\delta_j|^2\\
&\lesssim\frac{1}{\sqrt{nh}} \frac{1}{nh} |\hat{v}|^2 \sum_j
\bigg|\int_0^1 \int_0^1 v \partial^2\varphi(\delta_j+uv(\hat{\delta}_j-\delta_j))dvdu\bigg|
(|\Delta_jX|^2+h^2)|R_{j-1}| \nn\\
&\leq \frac{1}{\sqrt{nh}}\frac{1}{h} |\hat{v}|^2 \left(\frac{1}{n}\sum_j\left|\int_0^1 \int_0^1 v 
\partial^2\varphi(\delta_j+uv(\hat{\delta}_j-\delta_j))dvdu\right|^p \right)^{\frac{1}{p}}\\
&\times\left[\frac{1}{n}\sum_j\left\{
|R_{j-1}|
\left(|\Delta_jX|^2+h^2\right)\right\}^{\frac{p}{p-1}}\right]^{\frac{p-1}{p}}\\
&\lesssim \frac{1}{\sqrt{nh}}\frac{1}{h}\times O_{p}(1)\times\left\{\left(\frac{1}{n}\sum_j|\Delta_jX|^\frac{2pq}{p-1}\right)^\frac{p-1}{pq}
\times O_{p}(1)+O_p(h^{2})\right\}
\nn\\
&\lesssim \frac{1}{\sqrt{nh^{1+\epsilon_{0}}}}h^{\epsilon_{0}/2+\frac{p-1}{pq}-1} 
\times O_{p}(1)
\lesssim O_{p}\bigg(\frac{1}{\sqrt{nh^{1+\epsilon_{0}}}}\bigg)=o_{p}(1).
\end{align*}

As for $b_n^{(1,1)}$, we first observe that
\begin{equation}\nn
\hat{c}_{j-1}^{-1}-c_{j-1}^{-1}
=\frac{1}{\sqrt{nh}} \partial^{T}_\gamma(c_{j-1}^{-1}) \hat{w}
+\frac{1}{2nh}\hat{w}^T\left[\int_0^1\int_0^1v\partial_\gamma^{\otimes2}
(c_{j-1}^{-1})(\gam_0+uv(\hat{\gam}-\gam_{0}))dvdu\right]\hat{w}.
\end{equation}
In a similar way to the estimate of $|b_n^{(1,2)}|$, it follows from 
the definition of $\mathcal{K}_2$, the tightness of $(\hat{w})$, and Cauchy-Schwarz's inequality that
\begin{align*}
&\left|{(nh)}^{-\frac{3}{2}}\sum_j\partial \varphi(\delta_j)\Delta_jX\hat{w}^T
\left[\int_0^1\int_0^1v\partial_\gamma^{\otimes2}(c_{j-1}^{-1})(\gam_0+uv(\hat{\gam}-\gam_{0}))dvdu\right] \hat{w}\right|
\nn\\
&\lesssim 
{(nh)}^{-\frac{3}{2}}\sum_j \left|\partial \varphi(\delta_j)\right| |\Delta_jX||R_{j-1}|\times O_{p}(1)
\nn\\
&\lesssim\frac{1}{\sqrt{nh}}
\bigg(\frac{1}{nh}\sum_j \left|\partial \varphi(\delta_j)\right|^{2}\bigg)^{1/2}
\bigg(\frac{1}{nh}\sum_{j}|\Delta_jX|^{2}|R_{j-1}|\bigg)^{1/2}\times O_{p}(1)
\nn\\
&\lesssim O_{p}\bigg(\frac{1}{\sqrt{nh}}\bigg)=o_{p}(1).
\end{align*}
We also have
\begin{equation}\nn
\left|\sqrt{\frac{h}{n}}\sum_j\partial\varphi(\delta_j)(\hat{\eta}_{j-1}-\eta_{j-1})\right|\leq\frac{1}{n}\sum_j
|\partial\varphi(\delta_j)|\left|\int_0^1\partial_\theta\eta_{j-1}(\theta_0+u(\hat{\theta}-\theta_0))du\right||\hat{v}|=o_{p}(1).
\end{equation}
We thus get
\begin{equation}
b_n^{(1,1)}=\bigg\{\frac{1}{nh}\sum_j \Delta_jX 
\bigg(\partial \varphi(\delta_j) \otimes \partial_\gamma(c_{j-1}^{-1})\bigg)\bigg\}[\hat{w}]+o_p(1)
=:\mu_{n}[\hat{w}]+o_{p}(1).
\label{hm:add.eq5}
\end{equation}
It remains to take a closer look at $\mu_{n}\in\mbbr^{q}\otimes\mbbr^{p_{\gam}}$. 
Substitute the expression
\begin{equation}
\D_{j}X=\int_{j}a_{s}ds+\int_{j}(c_{s-}-c_{j-1})dJ_{s}+c_{j-1}\D_{j}J
\nonumber
\end{equation}
into \eqref{hm:add.eq5} and observe that
\begin{align}
\bigg|
\frac{1}{nh}\sum_j \int_{j}a_{s}ds
\bigg(\partial \varphi(\delta_j) \otimes \partial_\gamma(c_{j-1}^{-1})\bigg)
\bigg|&\lesssim 
\frac{1}{n}\sum_j |\partial \varphi(\delta_j)||R_{j-1}|
\bigg(\frac{1}{h}\int_{j}|a_{s}|ds\bigg)
\nonumber\\
&\lesssim
\bigg(\frac{1}{n}\sum_j |\partial \varphi(\delta_j)|^{2}\bigg)^{1/2}
\bigg\{\frac{1}{n}\sum_{j}|R_{j-1}|
\bigg(\frac{1}{h}\int_{j}|a_{s}|^{2}ds\bigg)\bigg\}^{1/2}
\nn\\
&\lesssim O_{p}(\sqrt{h}),
\nn
\end{align}
and similarly that, by using Burkholder's inequality (conditional on $\mathcal{F}_{t_{j-1}}$),
\begin{align}
& \bigg|
\frac{1}{nh}\sum_j \int_{j}(c_{s-}-c_{j-1})dJ_{s}
\bigg(\partial \varphi(\delta_j) \otimes \partial_\gamma(c_{j-1}^{-1})\bigg)
\bigg|\nn\\
&\lesssim 
\frac{1}{n}\sum_j |\partial \varphi(\delta_j)||R_{j-1}|
\bigg(\frac{1}{\sqrt{h}}\int_{j}\frac{1}{\sqrt{h}}(c_{s-}-c_{j-1})dJ_s\bigg)
\nonumber\\
&\lesssim
\bigg(\frac{1}{n}\sum_j |\partial \varphi(\delta_j)|^{2}\bigg)^{1/2}
\bigg\{\frac{1}{n}\sum_{j}|R_{j-1}|
\bigg(\frac{1}{\sqrt{h}}\int_{j}\frac{1}{\sqrt{h}}(c_{s-}-c_{j-1})dJ_s\bigg)^{2}\bigg\}^{1/2}
\nn\\
&\lesssim O_{p}(\sqrt{h}).
\nn
\end{align}
Therefore $\mu_{n}=\frac{1}{nh}\sum_j 
(\partial \varphi(\delta_j) \otimes \partial_\gamma(c_{j-1}^{-1}))c_{j-1}\D_{j}J+o_p(1)$ and we also get
\begin{align*}
& E\left[\left|\frac{1}{nh}\sum_j 
(\partial \varphi(\delta_j) \otimes \partial_\gamma(c_{j-1}^{-1}))c_{j-1}\D_{j}J\right|\right] \nn\\
&\leq\left(\frac{1}{n}\sum_j\frac{1}{h}E\left[|\partial\varphi(\delta_j)|^2\right]\right)^{1/2}\left(\frac{1}{n}\sum_jE\left[|R_{j-1}|^2\frac{1}{h}E[|\Delta_j J|^2]\right]\right)^{1/2}=O(1),
\end{align*}
hence the proof is complete.
\end{proof}

\subsection{Proof of Theorem \ref{Bias correction}}
In order to obtain \eqref{SE}, we first show that actually $\varphi\in\mathcal{K}_2$ and $\zeta\in\mathcal{K}_1\cap\mathcal{K}_2$ (recall the notation $\zeta(z)=z\p\vp(z)$).
As in the proof of Lemma \ref{AN}, it follows that $\zeta\in\mathcal{K}_1$.
From the proof of Lemma \ref{MC} and Lemma \ref{BCL}, for all $C\geq2$, we have
\begin{equation}\nn
\max_{1\leq j\leq n}E\left[|\Delta_jJ-\delta_j|^C\right]=O(h^2),\quad\max_{1\leq j\leq n}E\left[|\Delta_jJ|^C\right]=O(h).
\end{equation}
Moreover, \cite[Theorem 2.7]{Masuda2013} and \eqref{hm:add.eq7} give
\begin{equation}
E\left[\left|\delta_j-\hat{\delta}_j\right|^C\right]\lesssim(nh)^{-\frac{C}{2}}
E\left[\left(|\Delta_jX|^C+h^C\right)|R_{j-1}||\hat{v}|^C\right]=O\left((nh)^{-\frac{C}{2}}h^{1-a}\right),
\nonumber
\end{equation}
for any $a\in(0,1)$. Hence the Chebyshev's inequality yields that
\begin{equation}\nn
\max_{1\leq j \leq n} \sup_{u\in[0,1]}\left\{P\left(|\Delta_jJ+u(\delta_j-\Delta_jJ)|> M\right)\vee P\left(|\hat{\delta}_j+u(\delta_j-\hat{\delta_j} )|> M\right)\right\}=O(h).
\end{equation}
We will use these estimates without notice below. 
By the condition on $\partial \varphi$,  we have
\begin{align*}
&\sup_{u\in[0,1]} E\left[\left|\partial \varphi(\Delta_jJ+u(\delta_j-\Delta_jJ))\right|^2\right]\\
&\lesssim E\left[|\Delta_jJ|^2+|\delta_j-\Delta_jJ|^2+|\Delta_jJ|^{2(1+C_1)}+|\delta_j-\Delta_jJ|^{2(1+C_1)}\right] =O(h).
\end{align*}
In the same way as above, we also obtain $E\left[\left|\partial \varphi(\hat{\delta}_j)\right|^2\right]=O(h)$.
By Assumption \ref{Moment fitting function}, for all $K>0$, there exists a constant $C\geq 2$ such that
\begin{align*}
\left|\partial^2 \varphi(\hat{\delta}_j+u(\delta_j-\hat{\delta}_j))\right|^K&\lesssim1+\left|\hat{\delta}_j\right|^C+ \left|\delta_j-\hat{\delta}_j \right|^C\\
&\lesssim1+ \left|\delta_j-\hat{\delta}_j \right|^C+\left|\Delta_jJ-\delta_j\right|^C+|\Delta_jJ|^C,
\end{align*}
so it is straightforward that
\begin{align*}
&\max_{1 \leq j\leq n}\sup_{u\in[0,1]}E_0\left[\left|\partial^2 \varphi(\hat{\delta}_j+u(\delta_j-\hat{\delta}_j))\right|^K\right]\\
&\lesssim1+\max_{1 \leq j\leq n}E_0\left[\left|\delta_j-\hat{\delta}_j \right|^C+\left|\Delta_jJ-\delta_j\right|^C+|\Delta_jJ|^C\right]=O(1).
\end{align*}
Hence $\varphi\in\mathcal{K}_2$; similarly $\zeta\in\mathcal{K}_2$. 

Now we have
$\del_{j}-\D_{j}J = c_{j-1}^{-1}\int_{j}(a_{s}-a_{j-1})ds + c_{j-1}^{-1}\int_{j}(c_{s}-c_{j-1})dJ_s$; 
then, $E[E^{j-1}[|\del_{j}-\D_{j}J|^{2}]]\lesssim h^{2}$. 
Plugging-in the expression 
$\p\vp(\del_{j})=\p\vp(\D_{j}J) + (\del_{j}-\D_{j}J)\int_{0}^{1}\p^{2}\vp(\D_{j}J+u(\del_{j}-\D_{j}J))du$ 
and then applying analogous estimates under Assumption \ref{Moment fitting function} as before, 
we can deduce that 
\begin{align*}
&\left|\frac{1}{nh}\sum_j 
\left((\del_{j}-\D_{j}J)\int_{0}^{1}\p^{2}\vp(\D_{j}J+u(\del_{j}-\D_{j}J))du \otimes \partial_\gamma(c_{j-1}^{-1})\right)c_{j-1}\D_{j}J\right|\\
&\leq \left(\frac{1}{nh^2}\sum_j|\del_{j}-\D_{j}J|^2\right)^{1/2}\times
\left(\frac{1}{n}\sum_j\left|\int_{0}^{1}\p^{2}\vp(\D_{j}J+u(\del_{j}-\D_{j}J))du\right|^2\left|R_{j-1}\right|^2|\Delta_j J|^2\right)^{1/2}\\
&\lesssim O_p(1)\times
\left(\frac{1}{n}\sum_j\left|R_{j-1}\right|^2|\Delta_j J|^2(1+|\Delta_j J-\delta_j|^C+|\Delta_j J|^C)\right)^{1/2}=o_p(1).
\end{align*}
It follows from \cite[Theorem 1]{Lopez2008} and Lemma \ref{BCL} that 
under the present assumptions about $\zeta$ we have 
$\frac{1}{h}E[\zeta(\D_{j}J)]=\nu_0(\zeta)+o(1)=\frac{1}{nh}\sum_{j}\zeta(\hat{\del}_{j})+o_{p}(1)$. 
Therefore,
\begin{align}
\mu_{n}&=\frac{1}{nh}\sum_j \left\{\zeta(\D_{j}J) \otimes \partial_\gamma(c_{j-1}^{-1})\right\}c_{j-1}+o_{p}(1)
\nonumber\\
&=-\frac{1}{h}E[\zeta(\D_{j}J)] \otimes 
\bigg(\frac{1}{n}\sum_j \frac{\partial_\gamma c_{j-1}}{c_{j-1}}\bigg) \nn\\
&{}\qquad
+\frac{1}{nh}\sum_j \left\{(\zeta(\D_{j}J)-E[\zeta(\D_{j}J)]) \otimes \partial_\gamma(c_{j-1}^{-1})\right\}c_{j-1}+o_{p}(1)
\nn\\
&{}=-\frac{1}{h}E[\zeta(\D_{j}J)] \otimes 
\bigg(\frac{1}{n}\sum_j \frac{\partial_\gamma c_{j-1}}{c_{j-1}}\bigg)+o_{p}(1)
\nn\\
&=-\bigg(\frac{1}{nh}\sum_{j}\zeta(\hat{\del}_{j})\bigg) \otimes 
\bigg(\frac{1}{n}\sum_j \frac{\partial_\gamma \hat{c}_{j-1}}{\hat{c}_{j-1}}\bigg)
+o_{p}(1),
\nn
\end{align}
where we used the martingale central limit theorem together with Burkholder's inequality for the third equality. 
Thus the proof is complete.


\subsection{Proof of Theorem \ref{JAN1}}

From Lemmas \ref{AN}, \ref{AN2} and \ref{AN3}, it suffices to show that
\begin{align*}
&\frac{1}{nh}\sum_jE^{j-1}\left[(\varphi_{k}(\Delta_jJ)-E[\varphi_{k}(\Delta_jJ)])\left(\frac{\partial_{\alpha_l}a_{j-1}}{c_{j-1}}\Delta_jJ\right)\right] \overset{P_0}\longrightarrow  \int \varphi_{k}(z)z \nu_0(dz) \int \frac{\partial_{\alpha_l}a(x,\alpha_0)}{c(x,\gamma_0)}  \pi_0(dx),\\
&\frac{1}{nh}\sum_jE^{j-1}\left[(\varphi_{k}(\Delta_jJ)-E[\varphi_{k}(\Delta_jJ)])\left(\frac{\partial_{\gamma_l} c_{j-1}}{c_{j-1}}((\Delta_jJ)^2-h)\right)\right] \overset{P_0}\longrightarrow \int \varphi_{k}(z)z^2 \nu_0(dz)  \int \frac{\partial_{\gamma_l} c(x,\gamma_0)}{c(x,\gamma_0)} \pi_0(dx).
\end{align*}
Assumption \ref{Moments} yields that
\begin{equation}\nn
E^{j-1}\left[(\varphi_{k}(\Delta_jJ)-E[\varphi_{k}(\Delta_jJ)])\left(\frac{\partial_{\alpha_l}a_{j-1}}{c_{j-1}}\Delta_jJ\right)\right] = \frac{\partial_{\alpha_l}a_{j-1}}{c_{j-1}} E[\varphi_{k}(J_h)J_h]
\end{equation}
Similarly, we have
\begin{equation}\nn
E^{j-1}\left[(\varphi_{k}(\Delta_jJ)-E[\varphi_{k}(\Delta_jJ)])\left(\frac{\partial_{\gamma_l} c_{j-1}}{c_{j-1}}((\Delta_jJ)^2-h)\right)\right] =  \frac{\partial_{\gamma_l} c_{j-1}}{c_{j-1}} \left\{E[\varphi_{k}(J_h)J_h^2]-h E[\varphi_{k}(J_h)]\right\}.
\end{equation}
From the proof of Lemma \ref{AN} we can readily observe that $z\varphi,z^2\varphi\in\mathcal{K}_1$.
Hence the ergodic theorem and Lemma \ref{Rate of gap} lead to the desired result.

\subsection{Proof of Corollary \ref{JAN2}}

For the construction of asymptotic variance, we define the function space:
\begin{align*}
\mathcal{K}_3
&=
\Biggl\{f=(f_{k}):\mbbr\to\mbbr^{q}\, \Bigg|\, \text{$f$ is of class $C^{1}$}, \quad 
\max_{1\leq j\leq n} \sup_{u\in[0,1]}E \left[\left|\partial f(\Delta_jJ+u(\delta_j-\Delta_jJ))\right|^2\right]=o(1),
\nonumber \\
&{}\qquad \text{and}\quad 
\frac{1}{nh^{2}}\max_{1\leq j\leq n} \sup_{u\in[0,1]}
E \left[\left|\partial f(\hat{\delta}_j+u(\delta_j-\hat{\delta}_j))\right|^2\right]=o(1)\Biggr\}.
\end{align*}

The following lemma gives sufficient conditions for a given function to belong to $\mathcal{K}_3$.
\begin{Lem}\label{K3}
Assume that an $\mathbb{R}^q$-valued or $\mathbb{R}^q\otimes\mathbb{R}^q$-valued function $f$ is differentiable and there exist nonnegative constant $D$ such that $\limsup_{z\to 0}\frac{1}{|z|^{1-\epsilon_0}}|\partial f(z)|<\infty$ and $\limsup_{z\to\infty} \frac{1}{1+|z|^{D}} |\partial f(z)|<\infty$, 
where $\epsilon_{0}$ is given in Assumption \ref{Sampling design}. Then $f\in\mathcal{K}_3$.
\end{Lem}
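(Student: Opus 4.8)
The plan is to verify directly the two moment conditions in the definition of $\mathcal{K}_3$; the $C^1$-regularity demanded of its elements is already part of the hypothesis, so nothing is needed there. The first preparatory step is to fold the two one-sided growth assumptions on $\partial f$ into a single global bound. Since $\partial f$ is continuous — hence bounded on every compact set — and satisfies $|\partial f(z)|\lesssim|z|^{1-\epsilon_0}$ as $z\to0$ and $|\partial f(z)|\lesssim 1+|z|^{D}$ as $z\to\infty$, a routine case distinction ($|z|$ small / bounded away from $0$ and $\infty$ / large) gives
\begin{equation*}
|\partial f(z)|^{2}\ \lesssim\ |z|^{2(1-\epsilon_0)}\bigl(1+|z|^{2D}\bigr),\qquad z\in\mbbr .
\end{equation*}
The second preparatory step is to assemble the moment estimates that are already on record: $E[|\Delta_j J|^{2}]=h$, which via Jensen's inequality yields $E[|\Delta_j J|^{2(1-\epsilon_0)}]\le h^{1-\epsilon_0}$ (note $2(1-\epsilon_0)<2$), while Lemma \ref{MC} gives $E[|\Delta_j J|^{q}]=O(h)$ for every $q\ge2$; moreover, from the proof of Theorem \ref{Bias correction} we have $\max_{1\le j\le n}E[|\Delta_j J-\delta_j|^{q}]=O(h^{2})$ and $\max_{1\le j\le n}E[|\hat{\delta}_j-\delta_j|^{q}]=O((nh)^{-q/2}h^{1-a})$ for every $q\ge2$ and every $a\in(0,1)$, together with the Chebyshev estimate established there that $P(|\Delta_j J+u(\delta_j-\Delta_j J)|>M)$ and $P(|\hat{\delta}_j+u(\delta_j-\hat{\delta}_j)|>M)$ are $O(h)$ uniformly in $j$ and $u$.

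For the first condition I would write the intermediate point as $Y_{j,u}:=(1-u)\Delta_j J+u\delta_j$, so that $|Y_{j,u}|\le|\Delta_j J|+|\Delta_j J-\delta_j|$, insert the pointwise bound into $E[|\partial f(Y_{j,u})|^{2}]$, and expand by the $c_r$-inequality. This reduces the expectation to moments of $|\Delta_j J|$ and of $|\Delta_j J-\delta_j|$ of orders $2(1-\epsilon_0)$ and $2(1-\epsilon_0)+2D$, each of which is a strictly positive power of $h$ — using Jensen's inequality for the sub-quadratic orders and Lemma \ref{MC} for orders $\ge2$ — the slowest-decaying being $E[|\Delta_j J|^{2(1-\epsilon_0)}]\le h^{1-\epsilon_0}$. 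Hence $\max_{1\le j\le n}\sup_{u\in[0,1]}E[|\partial f(Y_{j,u})|^{2}]=O(h^{1-\epsilon_0})=o(1)$, as required.

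The second condition is the crux. The same computation, now for $Z_{j,u}:=(1-u)\hat{\delta}_j+u\delta_j$ with $|Z_{j,u}|\le|\Delta_j J|+|\Delta_j J-\delta_j|+|\hat{\delta}_j-\delta_j|$, produces the extra terms involving $|\hat{\delta}_j-\delta_j|$; these carry the additional factor $(nh)^{-q/2}$ and, once $a$ is taken small enough, are of strictly smaller order than $h^{1-\epsilon_0}$, while the polynomial-growth part of $\partial f$ is felt only on the event $\{|Z_{j,u}|>1\}$, of probability $O(h)$, and is dominated as well. Consequently $\max_{1\le j\le n}\sup_{u\in[0,1]}E[|\partial f(Z_{j,u})|^{2}]=O(h^{1-\epsilon_0})$, and then
\begin{equation*}
\frac{1}{nh^{2}}\max_{1\le j\le n}\sup_{u\in[0,1]}E\bigl[|\partial f(Z_{j,u})|^{2}\bigr]
\ =\ O\!\left(\frac{h^{1-\epsilon_0}}{nh^{2}}\right)
\ =\ O\!\left(\frac{1}{nh^{1+\epsilon_0}}\right)\ =\ o(1)
\end{equation*}
by Assumption \ref{Sampling design}. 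This is where the hypothesis pays off, and it is the main obstacle: the normalization $1/(nh^{2})$ forces a rate of decay faster than $nh^{2}$, which itself tends to $0$, and the exponent $1-\epsilon_0$ near the origin is calibrated precisely so that $h^{1-\epsilon_0}/(nh^{2})=1/(nh^{1+\epsilon_0})$, whereupon the lower-rate condition $nh^{1+\epsilon_0}\to\infty$ makes it vanish. A minor point to watch is that, because $1-\epsilon_0\in(0,1)$, one cannot invoke Lemma \ref{MC} for $E[|\Delta_j J|^{2(1-\epsilon_0)}]$ and must instead use the exact identity $E[|\Delta_j J|^{2}]=h$ together with Jensen's inequality.
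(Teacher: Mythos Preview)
Your proof is correct and essentially follows the paper's argument. The only packaging difference is that the paper splits on the event $\{|Z_{j,u}|\le 1\}$ versus $\{|Z_{j,u}|>1\}$ and, on the tail part, applies H\"older's inequality with exponents $(1/\epsilon_0,\,1/(1-\epsilon_0))$ to recover the rate $h^{1-\epsilon_0}$, whereas you merge the two regimes into the single pointwise inequality $|\partial f(z)|^{2}\lesssim |z|^{2(1-\epsilon_0)}(1+|z|^{2D})$ and compute the resulting moments directly; both routes land on the same leading order $h^{1-\epsilon_0}$ and conclude via $h^{1-\epsilon_0}/(nh^{2})=1/(nh^{1+\epsilon_0})\to 0$. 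Your global bound makes the computation slightly cleaner (no H\"older step), while the paper's event split makes more transparent why only the probability of $\{|Z_{j,u}|>1\}$ matters for the polynomial tail; in either case note that the higher-order moment $E[|Z_{j,u}|^{2(1-\epsilon_0)+2D}]$ is $O(h)$ when $2(1-\epsilon_0)+2D\ge 2$ (via Lemma~\ref{MC}) and $O(h^{(1-\epsilon_0)+D})$ otherwise (via Jensen), so it never exceeds the leading $h^{1-\epsilon_0}$ and your brief aside about the event $\{|Z_{j,u}|>1\}$ is not actually needed once the global bound is in place.
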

\begin{proof}
Dividing the events and applying H\"older's inequality, we have
\begin{align*}
&\frac{1}{nh^2} \max_{1\leq j \leq n} \sup_{u \in (0,1)} E \left[|\partial f(\hat{\delta}_j+u(\delta_j-\hat{\delta}_j)|^2\right]\\
&=\frac{1}{nh^2} \max_{1\leq j \leq n} \sup_{u \in (0,1)} E \left[|\partial f(\hat{\delta}_j+u(\delta_j-\hat{\delta}_j))|^2; |\hat{\delta}_j+u(\delta_j-\hat{\delta}_j)|\leq1 \right ]\\
&{}\qquad+ \frac{1}{nh^2} \max_{1\leq j \leq n} \sup_{u \in (0,1)} E \left[|\partial f(\hat{\delta}_j+u(\delta_j-\hat{\delta}_j))|^2; |\hat{\delta}_j+u(\delta_j-\hat{\delta}_j)|>1 \right ]\\
&\lesssim\frac{1}{nh^2} \max_{1\leq j \leq n}E\left[\left|\hat{\delta}_j\right|^{2(1-\epsilon_0)}+\left|\delta_j-\hat{\delta}_j\right|^{2(1-\epsilon_0)}\right]\\
&{}\qquad+\frac{1}{nh^2} \max_{1\leq j \leq n} \left(E\left[1+\left|\hat{\delta}_j\right|^{\frac{2D} {\epsilon_0}}+\left|\delta_j-\hat{\delta}_j\right|^{\frac{2D} {\epsilon_0}}\right]\right)^{\epsilon_0}\left(P\left( \left|\hat{\delta}_j\right|
+\left|\delta_j-\hat{\delta}_j \right|>1\right)\right)^{1-\epsilon_0}\\
&\lesssim 
\frac{1}{nh^2}h^{1-\ep_{0}}=\frac{1}{nh^{1+\epsilon_0}}=o(1).
\end{align*}
The other condition can be verified as well.
\end{proof}

First we note that Assumption \ref{Moment fitting function}, the mappings $z\mapsto z^3,z^4,z\varphi(z),z^2\varphi(z)$ satisfies the conditions of Lemma \ref{Rate of gap} and Lemma \ref{K3}.
Let us show that for any $f\in\mathcal{K}_1\cap\mathcal{K}_3$ we have
\begin{equation}
\frac{1}{nh} \sum_jf(\hat{\delta}_j)\overset{P_0}\longrightarrow \nu_0(f).
\label{hm.add2.1}
\end{equation}
From a similar decomposition to that in the proof of Theorem \ref{Bias correction}, we have
\begin{align*}
&\frac{1}{nh} \sum_jf(\hat{\delta}_j)-\nu_0(f)\\
&=\left\{\frac{1}{nh} \sum_jf(\hat{\delta}_j)- \frac{1}{nh} \sum_jf(\delta_j)\right\}+ \left\{\frac{1}{nh} \sum_jf(\delta_j)- \frac{1}{nh} \sum_jf(\Delta_jJ)\right\}+\left\{\frac{1}{nh} \sum_jf(\Delta_jJ)-\nu_0(f)\right\}.
\end{align*} 
Then Lemma \ref{Rate of gap} implies the last term is $o_p(1)$.
Taylor's expansion and H\"older's inequality yield that
\begin{align*}
&\left|\frac{1}{nh} \sum_jf(\hat{\delta}_j)- \frac{1}{nh} \sum_jf(\delta_j)\right| =\left|\frac{1}{nh}\sum_j\int_0^1 f'(\hat{\delta}_j+u(\delta_j-\hat{\delta}_j))du(\delta_j-\hat{\delta}_j)\right|\\
&\leq\frac{1}{\sqrt{nh}} \frac{1}{nh}\sum_j\left|\int_0^1 f'(\hat{\delta}_j+u(\delta_j-\hat{\delta}_j))du\right|\left(\sup_\gamma \left|\partial c^{-1}_{j-1}(\gamma)\right|\right)\left|\Delta_jX\right||\hat{w}|\\
&+\frac{1}{\sqrt{nh}} \frac{1}{n}\sum_j\left|\int_0^1 f'(\hat{\delta}_j+u(\delta_j-\hat{\delta}_j))du\right|\left(\sup_\theta \left|\eta_{j-1}(\theta)\right|\right)|\hat{v}|\\
&\leq\sqrt{\frac{1}{(nh)^2}\sum_j\left|\int_0^1f'(\hat{\delta}_j+u(\delta_j-\hat{\delta}_j))du\right|^2} \times \sqrt{\frac{1}{nh}\sum_j\sup_\gamma \left|\partial c^{-1}_{j-1}(\gamma)\right|^2 \left|\Delta_jX\right|^2}\times O_{p}(1)
+o_p\bigg(\frac{1}{\sqrt{nh}}\bigg).
\end{align*}
Hence, using the conditioning argument together with $E[|\D_{j}X|^{2}]\lesssim h$ 
we obtain $ \frac{1}{nh} \sum_jf(\hat{\delta}_j)- \frac{1}{nh} \sum_jf(\delta_j)=o_p(1)$.
Recall that $E[|\Delta_jJ -\delta_j|^2]\lesssim h^2$, from which
\begin{align*}
&\left|\frac{1}{nh} \sum_jf(\delta_j)- \frac{1}{nh} \sum_jf(\Delta_jJ)\right|\\
&\leq\sqrt{\frac{1}{n} \sum_j\left|\int_0^1 f'(\Delta_jJ+u(\delta_j-\Delta_jJ))du\right|^2} \times \sqrt{\frac{1}{nh^2}\sum_j|\Delta_jJ-\delta_j|^2} = o_p(1),
\end{align*}
hence \eqref{hm.add2.1} follows. 

Now, \eqref{hm.add2.1} and Lemma \ref{K3} yields that $\hat{\Sigma}_{11,n}\overset{P_0}\longrightarrow\Sigma_{11}$. 
As in the proof of Lemma \ref{AN2}, it follows that
\begin{equation}\nn
\sup_{\theta\in\Theta}\left|\frac{1}{n}\sum_{j=1}^n\frac{\partial_{\alpha_k}a_{j-1}(\alpha)\partial_{\alpha_l}a_{j-1}(\alpha)}{c_{j-1}^2(\gamma)}-\int\frac{\partial_{\alpha_k} a(x,\alpha) \partial_{\alpha_l} a(x,\alpha)}{c^2(x,\gamma)} \pi_0(dx)\right|
\overset{P_0}\longrightarrow 0.
\end{equation}
Hence the consistency of $\hat{\theta}$ and the continuity of the map $\theta\mapsto\int\frac{\partial_{\alpha_k} a(x,\alpha) \partial_{\alpha_l} a(x,\alpha)}{c^2(x,\gamma)} \pi_0(dx)$ implies that
\begin{equation}\nn
\frac{1}{n}\sum_{j=1}^n\frac{\partial_{\alpha_k}\hat{a}_{j-1}\partial_{\alpha_l}\hat{a}_{j-1}}{\hat{c}_{j-1}^2}
\overset{P_0}\longrightarrow
\int\frac{\partial_{\alpha_k} a(x,\alpha_0) \partial_{\alpha_l} a(x,\alpha_0)}{c^2(x,\gamma_0)} \pi_0(dx).
\end{equation}
Similar estimates and Slutsky's lemma lead to $\hat{\Sigma}_{12,n}\overset{P_0}\longrightarrow\Sigma_{12}$ and $\hat{\Sigma}_{22,n}\overset{P_0}\longrightarrow\Sigma_{22}$.
The desired result follows from Theorem \ref{Bias correction}, Theorem \ref{JAN1} and Slutsky's lemma.

\subsection{Proof of Corollary \ref{Delta}} 
From the result of Theorem \ref{Bias correction}, $\frac{1}{nh_n}\sum_{j=1}^n\varphi(\hat{\delta}_j)-\nu_0(\varphi)=o_p(1)$.
Hence the continuity of $\partial F$ and the invertibility of $\partial F(\nu_0(\varphi), \theta_0)$ yield the first result.  
Finally, \cite[Theorem 3.1]{VanderVaart2000} leads to the second result.

%
%


%
%

\bigskip

\subsection*{Acknowledgement}
We are grateful to the referees for careful reading and constructive comments, 
which led to substantial improvements of the earlier version of this paper. 
This work was partly supported by JSPS KAKENHI Grant Numbers 26400204 (H. Masuda) and 
JST, CREST.

\bigskip

\end{document}